\newtheorem{theorem}{Theorem}[section]
\newtheorem{lemma}[theorem]{Lemma}
\newtheorem{proposition}[theorem]{Proposition}
\theoremstyle{plain}
\newtheorem{definition}[theorem]{Definition}
\newtheorem{remark}[theorem]{Remark}
\newcommand{\nc}{\newcommand}
\newcounter{subeqn}
\renewcommand{\thesubeqn}{\theequation\alph{subeqn}}
\newcommand{\subeqn}{%
  \refstepcounter{subeqn}
  \tag{\thesubeqn}
}
\newcommand{\newseq}{%
  \refstepcounter{equation}
}
\nc{\cat}{\mathcal{V}}
\nc{\func}{\EuScript{T}}
\nc{\res}{\operatorname{res}}
\nc{\wellsep}{\mathscr{V}}
\newcommand{\Spec}{\operatorname{Spec}}
\renewcommand{\Re}{\operatorname{Re}}
\nc{\iwedge}[1]{\bigwedge\nolimits^{\! #1}}
\nc{\cod}{\mathbbm{d}}
\renewcommand{\dim}{\operatorname{dim}}
\newcommand{\Sym}{\operatorname{Sym}}
\newcommand{\Bi}{\mathbf{i}}
\nc{\ck}{g}
\newcommand{\Bk}{\mathbf{k}}
\newcommand{\Bz}{\mathbf{z}}
\newcommand{\pq}{\mathsf{q}}
\nc{\slehat}{\mathfrak{\widehat{sl}}_e}
\nc{\sllhat}{\mathfrak{\widehat{sl}}_\ell}
\nc{\glehat}{\mathfrak{\widehat{gl}}_e}
\nc{\slnhat}{\mathfrak{\widehat{sl}}_n}
\nc{\glnhat}{\mathfrak{\widehat{gl}}_n}
\nc{\eE}{\EuScript{E}}
\newcommand{\arxiv}[1]{\href{http://arxiv.org/abs/#1}{\tt arXiv:\nolinkurl{#1}}}
\nc{\eF}{\EuScript{F}}
\nc{\fF}{\mathfrak{F}}
\nc{\fE}{\mathfrak{E}}
\newcommand{\bone}{\mathbbm{1}}
\newcommand{\hone}{\text{\textswab{1}}}
\newcommand{\K}{\mathbbm{k}}
\newcommand{\Z}{\mathbb{Z}}
\newcommand{\Q}{\mathbb{Q}}
\nc{\Qlb}{\mathbb{\bar \Q}_\ell}
\nc{\Fq}{\mathbb{F}_q}
\nc{\Fqb}{\mathbb{\bar F}_q}
\nc{\walg}{W}
\newcommand{\R}{\mathbb{R}}
\newcommand{\C}{\mathbb{C}}
\newcommand{\la}{\leftarrow}
\nc{\PQ}{\mathsf{Q}}
\nc{\Bv}{\mathbf{v}}
  \nc{\Bw}{\mathbf{w}}
  \nc{\Bb}{\mathbf{b}}
\nc{\tU}{\mathcal{U}}
\nc{\Bu}{\mathbf{u}}
 \nc{\Fl}{\mathscr{F}\!\ell}
\nc{\Tr}{\operatorname{Tr}}
\nc{\sheafK}{\EuScript{K}}
\nc{\bmu}{\boldsymbol{\mu}}
\nc{\bpi}{\boldsymbol{\pi}}
\nc{\dwalg}{\mathbb{W}}
\nc{\dalg}{\mathbb{T}}
\nc{\aalg}{\mathscr{A}}
\nc{\tilt}{\rotatebox[origin=c]{180}{T}}
\renewcommand{\la}{\lambda}
\newcommand{\al}{\alpha}
\newcommand{\Hom}{\operatorname{Hom}}
\newcommand{\cP}{\mathcal{P}}
\newcommand{\cO}{\mathcal{O}}
\newcommand{\cS}{\gls{cS}}
\newcommand{\waha}{\gls{waha}}
\newcommand{\excise}[1]{}
\newcommand{\End}{\operatorname{End}}
\nc{\mH}{\gls{mH}}
\nc{\hmH}{\gls{hmH}}
\nc{\hR}{\gls{hR}}
\nc{\hcP}{\widehat{\cP}}
\nc{\hP}{\widehat{P}}
\nc{\PC}{\mathfrak{C}}
\newcommand{\mmod}{\operatorname{-mod}}
\newcommand{\bla}{{\underline{\boldsymbol{\la}}}}
\newcommand{\thetitle}{On graded presentations of Hecke algebras\\ and their generalizations}
\newcommand{\theheading}{On graded presentations of Hecke algebras and
  their generalizations}
\numberwithin{equation}{section}
\begin{document}

\newglossaryentry{mH} {
  text={\ensuremath{\mathcal{\widetilde{H}}}},
  name={\ensuremath{\mathcal{\widetilde{H}}(\pq)}},
   description={The affine Hecke algebra of $S_n$ with parameter $\pq=qe^h$.}
 }
 \newglossaryentry{mHfin} {
  name={\ensuremath{\mathcal{H}(\pq)}},
   description={The affine Hecke algebra of $S_n$ with parameter $\pq=qe^h$.}
 }
 \newglossaryentry{cmH} {
  text={\ensuremath{\mathcal{H}}},
  name={\ensuremath{\mathcal{H}(\pq,\PQ_\bullet)}},
   description={The cyclotomic quotient of $\gls{mH}(\pq)$ with Definition \ref{def:cmH}.}
 }
  \newglossaryentry{hmH} {
  text={\ensuremath{\widehat{\widetilde{\mathcal{H}}}}},
  name={\ensuremath{\widehat{\widetilde{\mathcal{H}}}(\pq)}},
   description={The completion of affine Hecke algebra $\gls{mH}(\pq)$
   with respect to the topology induced by $U$.}
 }
  \newglossaryentry{R} {
  text={\ensuremath{{R}}},
  name={\ensuremath{{R}(h)}},
   description={The Khovanov-Lauda-Rouquier algebra for the Dynkin
  diagram $U$ with the $h$-deformed relations.}
 }
   \newglossaryentry{hR} {
  text={\ensuremath{\widehat{R}}},
  name={\ensuremath{\widehat{R}(h)}},
   description={The Khovanov-Lauda-Rouquier algebra completed with
  respect to its grading.}
 }
   \newglossaryentry{cR} {
  text={\ensuremath{{R}}},
  name={\ensuremath{{R}^{Q_\bullet}(h,\Bz)}},
   description={The cyclotomic quotient of \gls{R} with Definition \ref{def:cR}.}
 }
   \newglossaryentry{hcP} {
  text={\ensuremath{\widehat{\cP}}},
  name={\ensuremath{\widehat{\cP}^\pm}},
   description={The (signed) polynomial representation of $\hmH$.}
 }
    \newglossaryentry{hP} {
  text={\ensuremath{\widehat{P}}},
  name={\ensuremath{\widehat{P}}},
   description={The polynomial representation of $\gls{hR}$.}
 }
     \newglossaryentry{waha} {
  tex={\ensuremath{\EuScript{W}}},
  name={\ensuremath{\EuScript{W}(\pq)}},
   description={The weighted affine Hecke algebra defined in
  Definition \ref{def:waha}.}
 }
     \newglossaryentry{hwaha} {
  text={\ensuremath{\widehat{\EuScript{W}}}},
  name={\ensuremath{\widehat{\EuScript{W}}(\pq)}},
  description={The completion of the weighted affine Hecke algebra defined in
  Definition \ref{def:waha}.}
 }
     \newglossaryentry{cS} {
  text={\ensuremath{\mathcal{S}}},
  name={\ensuremath{\mathcal{S}(\pq,n,m)}},
   description={The affine $q$-Schur algebra as defined in Definition \ref{def:cS}.}
 }
      \newglossaryentry{W} {
  name={\ensuremath{{W}}},
   description={The weighted KLR algebra defined in
  Definition \ref{def:wKLR}.}
 }
      \newglossaryentry{hW} {
  name={\ensuremath{\widehat{W}}},
   description={The completed weighted KLR algebra defined in
  Definition \ref{def:hW}.}
 }
 \newglossaryentry{tF} {
  name={\ensuremath{{\tilde{\EuScript{F}}^\vartheta (\pq,\PQ_{\bullet})}}},
   description={The type F affine Hecke algebra defined in
  Definition \ref{def:tF}.}
 }
      \newglossaryentry{htF} {
  name={\ensuremath{\widehat{\tilde{\EuScript{F}}^\vartheta}
  (\pq,\PQ_\bullet)}},
   description={The completed type F affine Hecke algebra.}
 }  \newglossaryentry{cF} {
  name={\ensuremath{\EuScript{F}^\vartheta
  (\pq,\PQ_\bullet)}},
   description={The type F Hecke algebra defined in
  Definition \ref{def:cF}.}
 }
  \newglossaryentry{tT} {
  name={{\ensuremath{\tilde{T}^{\boldsymbol{\la}}(h,\Bz)}}},
   description={The affine Stendhal algebra defined in
  Definition \ref{def:tT}.}
 }
      \newglossaryentry{htT} {
  name={\ensuremath{\widehat{\tilde{T}^{\boldsymbol{{\la}}}}(h,\Bz)}},
   description={The completion of the  affine Stendhal algebra.}
 }
 \newglossaryentry{cT} {
  name={\ensuremath{T^{\boldsymbol{\la}} (h,\Bz)}},
   description={The Stendhal algebra defined in
  Definition \ref{def:tT}}
  }
   \newglossaryentry{tWF} {
  name={\ensuremath{{\EuScript{\widetilde{WF}} (\pq,\PQ_{\bullet})}}},
  text={\ensuremath{{\EuScript{\widetilde{WF}}}}},
   description={The type WF affine Hecke algebra defined in
  Definition \ref{def:tWF}.}
 }
      \newglossaryentry{htWF} {
  name={\ensuremath{\widehat{\tilde{\EuScript{WF}}}}},
   description={The completed type F affine Hecke algebra.}
 }
 \newglossaryentry{cWF} {
  name={\ensuremath{\EuScript{{WF}} (\pq,\PQ_{\bullet})}},
  text={\ensuremath{\EuScript{{WF}}}},
   description={The type WF Hecke algebra defined in
  Definition \ref{def:cWF}.}
 }
   \newglossaryentry{tdalg} {
  name={{\ensuremath{\tilde{\dalg}^\vartheta (h,\Bz)}}},
  text={{\ensuremath{\tilde{\dalg}}}},
   description={The WF KLR algebra (reduced weighted KLR algebra) defined in
  Definition \ref{def:WF-KLR}.}
 }
 \newglossaryentry{cdalg} {
  name={\ensuremath{\dalg ^\vartheta (h,\Bz)}},
  text={\ensuremath{\dalg}},
   description={The steadied quotient of the WF KLR algebra defined in
  Definition \ref{def:WF-KLR}.}
  }
 \newglossaryentry{eprime} {
  name={\ensuremath{e'}},
  text={\ensuremath{e'}},
   description={The idempotent $e'$ in $\waha(\pq)$ which corresponds
   to all $m$-part compositions of $n$.}
  }
  \newglossaryentry{Cset} {
  name={\ensuremath{\mathscr{C}}},
  text={\ensuremath{\mathscr{C}}},
   description={The collection of sets $C_\mu$ attached to $m$-part
   composition of $n$.}
  }
    \newglossaryentry{cycSchur} {
  name={\ensuremath{\mathscr{S}^\pm(\Lambda)}},
  text={\ensuremath{\mathscr{S}(\Lambda)}},
   description={The cyclotomic $q$-Schur algebra $\mathscr{S}(\Lambda)$ of rank $n$  attached to the
data $(\pq,\PQ^\bullet)$ defined by Dipper, James and Mathas
\cite[6.1]{DJM}.}
  }
    \newglossaryentry{U} {
  name={\ensuremath{U}},
  text={\ensuremath{U}},
   description={A fixed finite subset of $\K\setminus\{0\}$, which we
   endow with a quiver structure by connecting $u\to qu$ whenever
   $u,qu\in U$.}
  }

\usetikzlibrary{decorations.pathreplacing,backgrounds,decorations.markings,calc}
\tikzset{wei/.style={draw=red,double=red!40!white,double distance=1.5pt,thin}}

\begin{center}
\noindent {\large  \bf \thetitle}
\medskip

\noindent {\sc Ben Webster}\footnote{Supported by the NSF under Grant
  DMS-1151473 and an NSERC Discovery Grant. This research was supported in part by Perimeter Institute for Theoretical Physics. Research at Perimeter Institute is supported by the Government of Canada through the Department of Innovation, Science and Economic Development Canada and by the Province of Ontario through the Ministry of Research, Innovation and Science.
}\\  
Department of Pure Mathematics\\ University of Waterloo \&\\
Perimeter Institute for Mathematical Physics\\
Waterloo, ON, Canada\\
Email: {\tt ben.webster@uwaterloo.ca}
\end{center}
\bigskip

\begin{quote}
\noindent {\em Abstract.}  In this paper, we define a number of
closely related isomorphisms.  On one side of these isomorphisms sit a
number of of algebras generalizing the Hecke and affine Hecke
algebras, which we call the ``Hecke family''; on the other, we find
generalizations of KLR algebras in finite and affine type A, the ``KLR family.''

We show that these algebras have compatible isomorphisms generalizing
those between Hecke and KLR algebras given by Brundan and Kleshchev.  This allows
us to organize a long list of algebras and categories into a single
system, including (affine/cyclotomic) Hecke algebras, (affine/cyclotomic) $q$-Schur
algebras, (weighted) KLR algebras, category $\cO$ for $\mathfrak{gl}_N$ and for the Cherednik
algebras for the groups $\Z/e\Z\wr S_n$, and give graded presentations
of all of these objects.
\end{quote}

\section{Introduction}
\label{sec:introduction}

Fix a field $\K$ and an element $q\neq 1,0\in \K$.  Let
$e$ be the multiplicative order of $q$.  In this paper, we
discuss isomorphisms between two different families of algebras
constructed from this data.

One of these families is
ultimately descended from Erich Hecke, though it is a rather distant
descent.  It's not clear he would recognize these particular progeny.
The other family is of a more recent vintage.  While the first hint of
its existence was the nilHecke algebra acting on the cohomology of the
complete flag variety, it was not written in full generality until the
past decade in work of Khovanov, Lauda and Rouquier \cite{KLI,KLII,Rou2KM}.    

In the spirit of
other families in representation theory, one can think of Hecke family
as being {\it trigonometric} and the KLR family as {\it rational}. However, a common phenomenon
in mathematics is the existence of an isomorphism between
trigonometric and rational versions of an object after suitable completion; the
``ur-isomorphism'' of this type is between the associated graded of
the K-theory of a manifold and its cohomology.  Such an isomorphism
has been given for completions of non-degenerate and degenerate affine
Hecke algebras by Lusztig in \cite{Lusgraded}.  Another similar
isomorphism is given in \cite{GTL} for Yangians and quantum affine
algebras.  In this paper, we will define isomorphisms with a similar
flavor between the algebras in the Hecke and KLR families.  These
isomorphisms are between certain special completions; before
discussing the specific examples, we cover some generalities on this
type of completion in Section \ref{sec:polyn-style-repr}.

In both cases, these families have somewhat complicated family
trees. Every one depends on a choice of a rank, which we will denote $n$
throughout.  In the diagrammatics, this will always correspond to a
number of strands. On the Hecke side, we will always have a dependence
on a parameter $q$, which we will sometimes want to deform to $qe^h$
with a $h$ a formal parameter.  On the KLR side, we will not see an
explicit family of algebras as we vary $q$, but the underlying
Dynkin diagram used in the definition of these algebras will depend on $h$.  

Like blood types, there are two complementary ways that they can
become complicated.  The simplest case, our analogue of blood type O,
is the affine Hecke algebra (on the Hecke side) and the KLR algebra of
the Dynkin diagrams $\hat{A}_e/A_\infty$ (on the KLR side).  The two complications we can
add are like the type A and type B antigens on our red blood cells.
Since ``type A/B'' already have established connotations in
mathematics, we will instead call these types W and F:
\begin{itemize} 
\item algebras with the type W complication are ``weighted''\footnote{The referee has suggested that
    ``wraith'' in reference to the ghost strands which appear might be
  more appropriate. You might very well think that; the author couldn't possibly comment.} : these
  include affine $q$-Schur algebras \cite{GreenSchur} (on the Hecke side) and weighted KLR
  algebras \cite{WebwKLR}.
\item algebras with the type F complication are 
  ``framed'': these include cyclotomic Hecke algebras \cite{AK} and the
  $\hat{A}_e/A_\infty$ tensor product categorifications from
  \cite{Webmerged}.  These are analogs of the passage from Lusztig to
  Nakajima quiver varieties.
\item finally, both of these complications can be present
  simultaneously, giving type WF.  The natural object which appears in
  the Hecke family is the category $\cO$ of a Cherednik algebra $\Z/e\Z\wr
  S_n$ \cite{GGOR}, though in a guise in which has not been seen previously.  On
  the KLR side, the result is a steadied quotient of a weighted KLR
  algebra for the Crawley-Boevey quiver of a dominant weight of type
  $\hat{A}_e/A_\infty$ (see Definition \ref{def:WF-KLR} and \cite[\S 3.1]{WebwKLR}).
\end{itemize}

Our main theorem is that in each type, there are completions of these
Hecke- and KLR-type algebras that are isomorphic.

Since a great number of different algebras of representation theoretic
interest appear in this picture, it can be quite difficult to keep
them all straight.  For the convenience of the reader, we give a table
in Figure \ref{table-of-algebras}, placing all the algebras and categories which appear in this picture in their
appropriate type.  Note that many of the items listed below (such as
Ariki-Koike algebras, or cyclotomic $q$-Schur and quiver Schur
algebras) are not the most general family members of that type, but
rather special cases. We'll ultimately focus on the category of
representations of a given algebra, so we have not
distinguished between Morita equivalent algebras.   

\begin{figure}\label{table-of-algebras}
  \centerline{\begin{tabular}{c|l|l|} Type& KLR side & Hecke side \\
                \hline O&\parbox[c][1.5em][c]{0.39\textwidth}{KLR
                          algebra \gls{R} \cite{KLI,Rou2KM}
                          }& affine Hecke algebra \gls{mH} of type A (Thm. \ref{type-O-Hecke})\\    \hline
                W& \parbox[c][3em][c]{0.39\textwidth}{weighted KLR
                   algebra \cite{WebwKLR}, \\ quiver Schur algebra 
                \cite{SWschur}} & affine
                                  $q$-Schur algebra $\gls{cS}(n,m)$ (Thm. \ref{waha-Schur})\\    \hline
                F& \parbox[c][4.5em][c]{0.45\textwidth}{cyclotomic KLR
                   algebras \cite{KLI},\\ algebras $\gls{cT}$
                categorifying tensor products for type
                $A/\hat{A}$ \cite{Webmerged}} &
                                                        \parbox[c][2.8em][c]{0.49\textwidth}{
                                                        cyclotomic
                                                        Hecke
                                                        (Ariki-Koike)
                                                        algebras
                                                        (Prop.~\ref{prop:cyclo-Hecke}),
                \\ category $\cO$
                for $\mathfrak{gl}_N$ ($e=\infty$) (\cite[9.11]{Webmerged})}\\    \hline
                WF& \parbox[c][4.5em][c]{0.47\textwidth}{reduced
                    steadied quotients
                    $\gls{cdalg}^\vartheta$ categorifying Uglov Fock spaces \cite{WebRou}, 
                cyclotomic quiver Schur algebras \cite{SWschur}}&
                 \parbox[c][2.8em][c]{0.49\textwidth}{category $\cO$ for a
                     Cherednik  algebra with  $\Z/e\Z\wr S_n$
                                                                  (\cite[Thm. A]{WebRou}),
                                                                  cyclotomic
                                                                  $q$-Schur
                                                                  algebras
                                                                  (Prop. \ref{cqs-morita})}  \\  \hline
              \end{tabular}}
          \caption{The algebras of interest}\end{figure}
On the KLR side, the diagrammatic formulation we give matches the
original definition of these algebras (with the exception of quiver
Schur algebras, which are shown to be Morita equivalent to certain
reduced steadied quotients in \cite[Th. 3.9]{WebwKLR}).  For
the Hecke side, typically our description is a bit different from the
definitions readers will be used to, and we have
listed the result in this paper or another which gives the relation.

\begin{remark}
  All of the algebras on the Hecke side of this list have degenerate
  analogues, and we could have written this paper, like \cite{BKKL}
  with parallel sets of formulas in the degenerate and non-degenerate
  cases.  We avoided doing this because of length, because the correspondence between
  degenerate and non-degenerate formulas is easy to work out (just replace
  multiplication by $q^{\pm 1}$ by addition of $\pm 1$),  and our
  ultimate goal is to apply our results to the Cherednik category
  $\cO$ in \cite{WebRou},
  which only uses the non-degenerate case. 
\end{remark}

\begin{remark}
  Very closely related (and in many cases, Morita equivalent) algebras
  were introduced by Maksimau and Stroppel \cite{MS18}; they use the
  terms ``Hecke family'' and ``KLR family'' exactly as above.  The
  main difference between the approaches in these papers is that this
  paper emphasizes not Schur algebras as those working in the field
  understand them, but certain Morita equivalent algebras we find more
  convenient to work with,  whereas \cite{MS18} work more directly
  with the Schur algebra.   
\end{remark}

\subsubsection*{Type O}  We'll first consider the simplest case of
this isomorphism.  In essence, this is just a rewriting of the
approach in 
\cite[\S 3.2]{Rou2KM}, but for applications in \cite{WebRou}, we require
a small generalization of those results, and it will serve to
illustrate our techniques for the sections on other types.  The two algebras we consider are:
\begin{itemize}
\item the affine Hecke algebra $\gls{mH}(\pq)$ of
$S_n$ with parameter $qe^h$, considered as a $\K[[h]]$-algebra. Note
that this deformation only makes sense if $\K$ has characteristic 0.
\item the KLR algebra $\gls{R}(h)$ of rank $n$ for
$\slehat$ attached to the polynomials $Q_{i+1,i}(u,v)=u-v+h$, also
considered over $\K[[h]]$.
\end{itemize}
These algebras are defined in \cite[(4.1--5)]{BKKL} and
\cite[(1.6--15)]{BKKL} respectively; here we consider them with
the addition of an $h$-adic deformation.  This deformation is very
important since it allows us to compare affine Hecke algebras with $q$
at a root of unity with those for generic $q$.  
For KLR algebras, this
corresponds to comparing KLR algebras for $\widehat{A}_e$ and
$A_\infty$ (as in \cite[Ex. 2.25]{WebwKLR}).
By the usual idempotent lifting arguments (see, for example,
\cite[Lem. 2.2]{Kbook}), the Grothendieck group of
projective modules for $\gls{R}(h)$ is the same as that for the usual KLR
algebra $R$ with $h$ set to $0$; thus, $\gls{R}(h)$-modules categorify the
algebra $U^+(\mathfrak{sl}_\infty)$ or
$U^+(\mathfrak{\widehat{sl}}_e)$ by \cite[Thm. 8]{KLII}.

\begin{theorem} \label{thm:O-main}
  There is a $\K[[h]]$-algebra isomorphism $\hmH(\pq)\cong \hR(h)$.  
\end{theorem}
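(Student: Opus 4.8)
The plan is to adapt the isomorphism of Brundan and Kleshchev \cite{BKKL} between affine Hecke and KLR algebras to the present $h$-adic setting, exploiting the completion to make sense of, and to invert, the power-series corrections that the construction requires.

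First I would analyze the commutative subalgebra generated by the Bernstein elements $X_1^{\pm 1}, \dots, X_n^{\pm 1}$ inside $\mH_h$. After completing, $\hmH_h$ splits as a product of blocks, and on each the commuting $X_k$ act with generalized eigenvalues $\mathsf{q}^{i_k} = (qe^h)^{i_k}$ attached to residue sequences $\mathbf{i} = (i_1, \dots, i_n)$, only finitely many of which contribute to a given block. The key point of the deformation is that $e^h$ is a non-torsion unit in $\K[[h]]$, so $\mathsf{q}^{i}$ and $\mathsf{q}^{j}$ remain distinct for $i \ne j$ even when $q^i = q^j$ with $i \equiv j \pmod e$; this is precisely how the $h$-adic deformation interpolates between the $\hat A_e$ picture at $h=0$ and the $A_\infty$ picture governing the generic behavior. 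I would then define idempotents $e(\mathbf{i})$ as the corresponding spectral projections, which live in the completion but not in $\mH_h$ itself.

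Next I would write down the candidate images of the KLR generators. For the polynomial generators I would set $X_k e(\mathbf{i}) = \mathsf{q}^{i_k}(1+y_k)e(\mathbf{i})$, defining $y_k$ as the resulting power series in $\mathsf{q}^{-i_k}X_k - 1$; the gap $\mathsf{q}^{i+1} - \mathsf{q}^{i} = \mathsf{q}^i(\mathsf{q}-1)$ between adjacent eigenvalues is what reproduces the relation $Q_{i+1,i}(u,v) = u-v+h$, once the invertible factor $\mathsf{q}-1 = qe^h - 1$ is absorbed by rescaling. For the intertwiner generators I would take $\psi_k e(\mathbf{i})$ to be $T_k$ plus a correction term built from $X_k, X_{k+1}$ and the idempotents, chosen exactly as in Brundan--Kleshchev so that $\psi_k$ carries the $\mathbf{i}$-block to the $s_k\mathbf{i}$-block with the correct square. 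These corrections are rational in the $X$'s with denominators $\mathsf{q}^{i_k} - \mathsf{q}^{i_{k+1}}$, which are invertible on each block precisely because the idempotents separate the eigenvalues.

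The main work, and the hard part, is to verify the defining relations of $\hR_h$ for these elements: the idempotent and commutation relations, the $y$--$\psi$ relations with their error terms when $i_k = i_{k+1}$, the quadratic relation $\psi_k^2 e(\mathbf{i}) = Q_{i_k,i_{k+1}}(y_k,y_{k+1})e(\mathbf{i})$, and the KLR braid relation. Each reduces, after commuting idempotents past the generators, to an identity in the completed commutative part, provable by $h$-adic power-series manipulation; the delicate step is to check that the Hecke relation $(T_k-\mathsf{q})(T_k+1)=0$ and the type-$A$ braid relation become, after this nonlinear change of variables and the addition of the correction terms, exactly the quadratic and braid relations of the deformed KLR algebra, and that every denominator appearing is genuinely an $h$-adic unit on the relevant block. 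Finally I would establish bijectivity by a basis comparison: both sides are topologically free over $\K[[h]]$ on bases indexed by the same data, the Bernstein basis $\{X^{\mathbf{a}}T_w\}$ on the Hecke side and the monomial basis $\{y^{\mathbf{a}}\psi_w e(\mathbf{i})\}$ of \cite{KLI} on the KLR side. Since the construction writes each KLR generator inside $\hmH_h$ and, by inverting the triangular substitution $X_k \mapsto \mathsf{q}^{i_k}(1+y_k)$ in the completion, each Hecke generator in terms of the KLR generators, the map sends one topological basis to the other and is therefore an isomorphism of $\K[[h]]$-algebras. Specializing at $h=0$ recovers the Brundan--Kleshchev isomorphism in type $\hat A_e$, which serves as a consistency check.
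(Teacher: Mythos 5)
There is a genuine gap here, and it sits exactly at the point where this theorem differs from the undeformed Brundan--Kleshchev statement. Your proposed block decomposition does not exist: the completion $\hmH_h$ is taken with respect to the ideal generated by $h$ and $\prod_{u\in U}(X_i-u)$, so its spectral idempotents are indexed by tuples $\Bu\in U^n$ of honest eigenvalues in $\K$, not by integer residue sequences. If $i\neq j$ but $i\equiv j\pmod e$, then
\[
\pq^{\,i}-\pq^{\,j}\;=\;q^ie^{jh}\bigl(e^{(i-j)h}-1\bigr)\;\in\; h\K[[h]],
\]
which is nonzero but topologically nilpotent, hence not invertible; generalized eigenvalues that agree modulo $h$ cannot be separated by idempotents, since the completed commutative subalgebra on a block is the local ring $\K[[h,X_1-u_1,\dots,X_n-u_n]]$, which has no nontrivial idempotents. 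For the same reason your intertwiner denominators $\pq^{\,i_k}-\pq^{\,i_{k+1}}$ are \emph{not} invertible on a block when $i_k\equiv i_{k+1}\pmod e$ with $i_k\ne i_{k+1}$. The $A_\infty$ picture you invoke emerges only after inverting $h$; over $\K[[h]]$ the idempotents of both $\hmH_h$ and $\hR_h$ are indexed by $U^n$, i.e.\ by $\Z/e$-valued residues, and this is the only interesting case of the theorem.

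Second, and independently: once the blocks are indexed by $U^n$, the substitution $X_k e(\Bi)=\pq^{\,i_k}(1+y_k)e(\Bi)$ does not reproduce $Q_{i+1,i}(u,v)=u-v+h$, contrary to your claim that the eigenvalue gap does so ``after rescaling'' (dividing by the unit $\pq^{\,i}(\pq-1)$ cannot create an $h$). On a block whose chosen lifts satisfy $i_r=i_{r+1}+1$ one computes
\[
X_r-\pq X_{r+1}\;=\;\pq^{\,i_r}(1+y_r)-\pq^{\,i_{r+1}+1}(1+y_{r+1})\;=\;\pq^{\,i_r}\,(y_r-y_{r+1}),
\]
with no $h$ at all, so the quadratic KLR relation you set out to verify is simply false for your elements; the deformation survives only as a discrepancy concentrated on the one edge of the $e$-cycle where the lifts wrap around (and if you instead center at $u_k=q^{i_k}\in\K$, every edge acquires the non-standard factor $y_r-e^hy_{r+1}-(e^h-1)$ rather than $y_r-y_{r+1}-h$). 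Either way you land on a presentation with the wrong $Q$-polynomials, and identifying it with $\hR_h$ requires precisely the exponential change of variables $1+y_k\rightsquigarrow e^{y_k}$ --- that is, the paper's requirement $b(h)=d(h)=e^h$, under which $X_r-\pq X_{r+1}=u_r\bigl(e^{y_r}-e^{y_{r+1}+h}\bigr)$ is a unit times $y_r-y_{r+1}-h$ uniformly on every edge. The paper flags exactly this obstruction (``it is not clear how to extend these isomorphisms to the deformed case'') as its reason for abandoning the series $1+y$; your proposal inherits the obstruction rather than resolving it. Separately, note that the paper's verification is organized differently --- it transports the faithful completed polynomial representations through $\gamma_p$ and matches operators up to invertible factors $A_r^{\Bu}$, rather than checking the KLR relations --- but the method is not the problem; the choice of idempotents and of the series $1+y$ is.
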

The characteristic 0 assumption may look peculiar to experts in the
field; the Hecke algebra over a field of characteristic $p$ has
similar deformations coming from deforming the parameter $q$ (though
$e^h$ does not make sense here), but it's not clear how to match other
deformations of the Hecke algebra with the simplest deformations of
the KLR algebra.  
A different deformation of the KLR algebra defined by Hu and Mathas \cite{HMsemi} is
compatible with more general deformations of the Hecke algebra, in particular with the
deformation of $\mathbb{F}_p[S_n]$ to $\mathbb{Z}_p[S_n]$.  
Since our primary applications will be to Hecke algebras and
related structures of characteristic 0, this hypothesis is no problem
for us.  In general, we'll prove our results in parallel with the undeformed Hecke
algebra (and related structures) in arbitrary characteristic, and with
the exponentially deformed Hecke algebra in characteristic 0.

One isomorphism between type O completions was implicitly constructed by Brundan and
Kleshchev in \cite{BKKL} and for a related localization by Rouquier in
\cite[\S 3.2.5]{Rou2KM} for $h=0$. Unfortunately, it is not clear how
to extend these isomorphisms to the deformed case, so instead we
construct an isomorphism which is different even after the
specialization $h=0$.  This isomorphism still has a similar flavor to those previously
defined; in brief, we use a general power series of the form
$1+y+\cdots$ (in particular $e^y$) where Brundan and Kleshchev or
Rouquier use $1+y$.  

We will also generalize this theorem in a small
but useful way:    in fact there is a natural class of completions of the Hecke
algebra that correspond with the KLR algebra for a larger Lie
algebra $\mathfrak{G}_{\gls{U}}$.  Here we consider an arbitrary finite subset $\gls{U}\subset \K\setminus\{0\}$, given a graph
structure connecting $u$ and $u'$ if $qu=u'$, and let $\mathfrak{G}_{\gls{U}}$
be the associated Kac-Moody algebra.  

This definition is the same as
the ``type A graphs'' in \cite[\S 3.2.5]{Rou2KM}, but we do not impose
a connectedness assumption.    
The most important case is when $\gls{U}$ is the $e$th roots of unity, so
$\gls{U}$ is an $e$-cycle, but having a more general statement will be
useful in an analysis of the category $\cO$ for a cyclotomic rational
Cherednik algebra given in \cite{WebRou}.  A more direct proof of this
equivalence using the Dunkl-Opdam subalgebra is now given in \cite{Webalt}.  The generalization of
Theorem \ref{thm:O-main} to this case (Proposition \ref{O-isomorphism}) gives an
alternate approach (and graded version) of the theorem of Dipper and
Mathas \cite{DMmorita} that Ariki-Koike algebras for arbitrary parameters are Morita
equivalent to a tensor product of such algebras with $q$-connected parameters.

The technique we use for this isomorphism and all others considered in
this paper is a variation on that
used by Rouquier in \cite[\S 3.26]{Rou2KM}. We construct an isomorphism between
  completions of the polynomial representations of $\gls{mH}(\pq)$ and $\gls{R}(h)$,
  and then match the operators given by these algebras.  This
  requires considerably less calculation than confirming the relations
  of the algebras themselves.  It also has the considerable advantage of
  easily generalizing to other types.

  In Maksimau and Stroppel's framework \cite{MS18}, these are the cases which are
  ``no level, not Schur.''

\subsubsection*{Type W} 
The first variation we introduce is ``weightedness.''  This is a
similar change of framework in both the Hecke and KLR families, though
it is not easy to see from the usual perspective on the Hecke algebra.  This algebra
can be considered as the span of strand diagrams with number of strands equal to
the rank of the algebra, and a crossing corresponding to $T_i+1$ or
$T_i-q$, depending on conventions.  In this framework, we can
introduce a generalization of the Hecke algebra which allows ``action
at a distance'' where certain interactions between strands occur at a
fixed distance from each other rather than when they cross.  To see
the difference between these, compare the local relations
(\ref{Hecke-1}--\ref{Hecke-triple}) with
(\ref{nilHecke-2}--\ref{eq:triple-point-2}).  We have already
introduced this concept in the KLR family as {\bf weighted KLR
  algebras} \cite{WebwKLR}, but the idea of incorporating it into the
Hecke algebra seems to be new.  Note that wKLR algebras are defined
for any Cartan datum, but as usual, we will only consider those
attached to the quiver structures on sets $\gls{U}$ (which are always unions
of finite and affine type A).

 The main result in this case is
that we obtain a graded KLR type algebra Morita equivalent to the
affine Schur algebra after completion; after this preprint had appearedon the arXiv,
Miemietz and Stroppel \cite{MiSt} showed a direct isomorphism of the completed affine Schur
algebra with a quiver Schur algebra from \cite{SWschur}.
When $e=\infty$, these algebras are Morita equivalent to the type O
algebras, and thus they still categorify the algebra
$U^+(\mathfrak{sl}_\infty)$.  When $e <\infty$, the category of
representations is larger, and corresponds to the passage from
$U^+(\mathfrak{\widehat{sl}}_e)$ to
$U^+(\mathfrak{\widehat{gl}}_e)$.

  Thus, in Maksimau and Stroppel's framework \cite{MS18}, these are the cases which are
  ``no level, Schur'' (though again, we should emphasize that our
  algebra only match theirs up to Morita equivalence in the Schur
  cases).  

\subsubsection*{Type F}
The second variation we'll consider is ``framing.''  This is also a
fundamentally graphical operation, accomplished by including red lines, which
then interplay with those representing our original Hecke algebra.
This case is closely related to the extension from Hecke algebras to
cyclotomic Hecke algebras and parabolic category $\cO$ of type A.

These algebras lead to categorifications of tensor products of simple
representations.  
In the KLR family, these are precisely the tensor
product algebras introduced in \cite[Def. 4.7]{Webmerged}; in the Hecke family,
these algebras do not seem to have appeared in precisely this form
before, though they appear naturally as endomorphisms of modules
over cyclotomic Hecke algebras.

In particular, we show that our isomorphism and deformation are also compatible with
  deformations of cyclotomic quotients.  
For a fixed multiset $\{Q_1,\dots, Q_\ell\}$ of elements of $\gls{U}$, there are cyclotomic quotients of both
  $\gls{mH}(q)$ and $\gls{R}(q)$ (the specializations at $h=0$), which Brundan and Kleshchev construct an
  isomorphism between.  We can deform this cyclotomic quotient with
  respect to variables $\mathbf{z}=\{z_{j}\}$.

For $\gls{mH}(\pq)$,  consider the
deformed cyclotomic quotient attached to the 
polynomial 
\mbox{$C(A)=  \prod_{i=1}^\ell
(A-Q_ie^{-z_i}).$}  
\begin{definition}\label{def:cmH}
  The deformed cyclotomic quotient $\gls{cmH}(\pq,\PQ_\bullet)$ is the quotient
  of the base extension $\gls{mH}(\pq)\otimes_\K\K[[\mathbf{z}]]$ by the
  2-sided ideal generated by $C(X_1)$.
\end{definition}
This is precisely the Ariki-Koike algebra of \cite[Def. 3.1]{AK} for
$G(\ell,1,n)$ with the parameters $u_i=Q_ie^{-z_i}$ (where we use $u_i$ as in the reference
of \cite{AK}).

For $\gls{R}(h)$, the corresponding quotient is given by an additive
deformation of the roots.  For each $u\in \gls{U}$, we have a
polynomial $c_u(a)=\prod_{Q_j=u}(a-z_{j})$.
\begin{definition}\label{def:cR}
  The deformed cyclotomic quotient $\gls{cR}^{Q_\bullet}(h,\Bz)$ is a quotient of
  the base extension $\gls{R}(h)\otimes_\K\K[[\mathbf{z}]]$ by the ideal
  generated by $c_{u_1}(y_1)e_\Bu$ for every length $n$ sequence
  $\Bu\in \gls{U}^n$.
\end{definition}
For the usual indexing of cyclotomic quotients by dominant weights,
this is a deformation of the cyclotomic KLR algebra $\gls{cR}^\la$ attached in \cite{KLI} to a
dominant weight $\la$ of $\mathfrak{G}_{\gls{U}}$ satisfying
\[\al_u^\vee(\la)=\#\{i\in [1,\ell]\mid Q_i=u\}.\]

\begin{theorem}
  The isomorphism $\hmH(\pq)\cong \hR(h)$ induces an isomorphism of
  $\K[[h,\Bz]]$-algebras $\gls{cmH}(\pq,\PQ_\bullet)\cong \gls{cR}^{\la}(h,\Bz)$.
\end{theorem}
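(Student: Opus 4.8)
The plan is to obtain the statement directly from the isomorphism $\hmH\cong\hR$ by base-extending it to $\K[[h,\Bz]]$ and checking that the two cyclotomic ideals correspond. Concretely, base-extend $\hmH\cong\hR$ along $\K\to\K[[\Bz]]$ and complete; since base extension and completion are functorial this produces an isomorphism $\Phi$ between the completions of $\mH_h\otimes_\K\K[[\Bz]]$ and $R_h\otimes_\K\K[[\Bz]]$, in which the $e_\Bu$ ($\Bu\in U^n$) still form a complete orthogonal system of idempotents with $\sum_\Bu e_\Bu=1$. It then suffices to show that $\Phi$ carries the closed two-sided ideal generated by $C(X_1)$ onto the one generated by the elements $c_{u_1}(y_1)e_\Bu$, and that these completions may be removed at the end.

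The heart of the matter is a local computation on each idempotent summand. By the construction of the isomorphism $\hmH\cong\hR$ (which matches the polynomial representations and replaces $X_1$ by an exponential in $y_1$), $\Phi(X_1)$ acts as $\Phi(X_1)e_\Bu=u_1e^{-y_1}e_\Bu$, with the sign of $\Bz$ and the normalization as fixed in the proof of that isomorphism. Since $C$ has coefficients in the base ring, $\Phi(C(X_1))=C(\Phi(X_1))$, so on the summand $e_\Bu$
\[
\Phi(C(X_1))\,e_\Bu=\prod_{i=1}^\ell\bigl(u_1e^{-y_1}-Q_ie^{-z_i}\bigr)e_\Bu .
\]
A factor with $Q_i\ne u_1$ has constant term $u_1-Q_i\ne0$ and is invertible in the completed base-extended algebra. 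A factor with $Q_i=u_1$ equals $u_1e^{-z_i}(e^{z_i-y_1}-1)=-(y_1-z_i)\,w_i$, where $w_i=u_1e^{-z_i}\frac{e^{z_i-y_1}-1}{z_i-y_1}$ has constant term $u_1\ne0$ and is therefore a unit. Collecting factors yields $\Phi(C(X_1))e_\Bu=v_\Bu\,c_{u_1}(y_1)e_\Bu$ for a unit $v_\Bu$; this is precisely where the exponential deformation $Q_ie^{-z_i}$ on the Hecke side is engineered to reproduce the additive deformation $c_u(a)=\prod_{Q_j=u}(a-z_j)$ on the KLR side.

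The two ideals now match. Using $\sum_\Bu e_\Bu=1$ we get $\Phi(C(X_1))=\sum_\Bu v_\Bu c_{u_1}(y_1)e_\Bu$, so $\Phi(C(X_1))$ lies in the ideal generated by the $c_{u_1}(y_1)e_\Bu$; conversely each $c_{u_1}(y_1)e_\Bu=v_\Bu^{-1}\Phi(C(X_1))e_\Bu$ lies in the image under $\Phi$ of the ideal generated by $C(X_1)$. Hence $\Phi$ descends to an isomorphism between the quotients of the two completed algebras. Finally, the deformed cyclotomic quotients $\mH^{Q_\bullet}_{h,\Bz}$ and $R^{Q_\bullet}_{h;\Bz}$ are finitely generated as $\K[[h,\Bz]]$-modules (just as for Ariki--Koike algebras and their cyclotomic KLR counterparts), hence already $(h,\Bz)$-adically complete; each therefore coincides with the quotient of its completion, and $\Phi$ induces the desired isomorphism $\mH^{Q_\bullet}_{h,\Bz}\cong R^{Q_\bullet}_{h;\Bz}$.

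I expect the main obstacle to be twofold. First, one must extract the precise image $\Phi(X_1)e_\Bu$ from the construction of $\hmH\cong\hR$, so that the exponential-to-additive matching above is literally correct, including the normalization of $\Bz$ and the direction of the exponential. Second, there is genuine completion bookkeeping: one must check that passing to completions is harmless, i.e.\ that the cyclotomic quotients are finite over $\K[[h,\Bz]]$ so that the closure of each cyclotomic ideal meets the uncompleted algebra in exactly that ideal; this is what lets the isomorphism of completed quotients descend to the quotients in the statement.
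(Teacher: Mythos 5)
Your proposal is correct and follows essentially the same route as the paper's own argument: transport the cyclotomic ideal through the completed isomorphism, observe on each summand $e_{\Bu}$ that the factors of $C(X_1)$ with $Q_i\neq u_1$ become units while those with $Q_i=u_1$ become units times $(y_1-z_i)$, and conclude that the two cyclotomic ideals correspond, so the quotients are identified. The one point to pin down is exactly the normalization you flag --- with $\gamma(X_r)=\sum_{\Bu}u_rb(y_r)e_{\Bu}$ as fixed in Proposition \ref{O-isomorphism} the factors come out as units times $(y_1+z_i)$, so one must either use the $u_1e^{-y_1}$ convention (as you do, matching the paper's computation) or harmlessly substitute $z_i\mapsto -z_i$ --- and your handling of the completion bookkeeping is, if anything, more explicit than the paper's.
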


In Maksimau and Stroppel's framework \cite{MS18}, these are the cases which are
  ``higher level, not Schur.''

\subsubsection*{Type WF} 

Our final goal, the algebras incorporating both these modifications,
is the least likely to be familiar to readers.  The category of
representations over these algebras is equivalent to the category
$\cO$ for a rational Cherednik algebra for $\Z/\ell\Z \wr S_n $, as
we show in \cite{WebRou}.  In certain cases, these algebras are also
Morita equivalent to cyclotomic $q$-Schur algebras.

The isomorphism between the two families in
this case will prove key in the results of  \cite{WebRou}, proving the
conjecture of Rouquier identifying decomposition numbers in this
category $\cO$ with parabolic Kazhdan-Lusztig polynomials.  This
construction is also of some independent interest as a
categorification of Uglov's higher level Fock space, introduced in
\cite{Uglov}.  In \cite{WebRou}, we will show that several natural,
but hard-to-motivate structures on the Fock space arise from these
algebras.

In Maksimau and Stroppel's framework \cite{MS18}, these include the cases which are
  ``higher level, Schur'' (as before, up to Morita equivalence).    We should however, note that the algebras
  we consider are more general, since they depend on the ratios of
  parameters corresponding to the weightedness and the framing; the
  higher level Schur case only captures situations where this ratio is
  small.  This more general context is used in
  \cite{WebRou,Webalt} to compare with category $\cO$ over Cherednik
  algebras \cite{GGOR}.

\section*{Acknowledgements}
\label{sec:acknowledgements}

Many thanks to Stephen Griffeth for pointing out several small errors
in an earlier version of the paper, to the referees for several
thoughtful suggestions and to Peng Shan, Michaela
Varagnolo, \'Eric Vasserot, Liron Speyer, Christopher Bowman-Scargill,
Ruslan Maksimau and Catharina Stroppel for useful discussions.

\section{Polynomial-style representations}
\label{sec:polyn-style-repr}
First, we will discuss some generalities about completions of algebras
and their representations.  There are a few facts about these
completions we will want to use many times, so it is more convenient
to have a general framework from which they follow.  

 Let $A$ be a $\mathbb{K}$-algebra for $\mathbb{K}$ a commutative
  ring.  Let $B$ a Noetherian commutative $\mathbb{K}$-algebra such
  that $\Spec B$ is a smooth curve over $\Spec \mathbb{K}$.  We'll
  primarily be interested in the case where $B=\mathbb{K}[X,X^{-1}]$
  or $B=\mathbb{K}[y]$, that is the affine line or punctured affine
  line.  The $n$-fold tensor power $B^{\otimes
    n}=B\otimes_{\mathbb{K}} B\otimes_{\mathbb{K}} \cdots
  \otimes_{\mathbb{K}} B$ is thus the functions on the $n$-fold fiber
  product of $\Spec B$ with its usual induced action of $S_n$, and the
  algebra $Z=(B^{\otimes n})^{S_n}$ has smooth spectrum $\Spec
  Z=\Sym^n_{\Spec\K}(\Spec B)$.  As usual, $B^{\otimes n}$ is projective of
  rank $n!$ over $Z$, and free if $\Spec B$ is the punctured or
  unpunctured affine line.
\begin{definition}
 Consider a $\mathbb{K}$-algebra homomorphism $\psi\colon B^{\otimes
    n}\to A$ and an  $A$-module $P$.  We say that the data $(A,B,\psi,P)$ is a
  {\bf polynomial-style representation} of rank $p$ if  
  \begin{enumerate}
  \item $A$ is finite rank and free over $B^{\otimes n}$.
  \item $Z=(B^{\otimes n})^{S_n}$ is central in $A$.
 \item $P$ is faithful  and free
  over $B^{\otimes n} $ of some rank $p$.
  \end{enumerate}
  We call this a {\bf a graded  polynomial-style representation} if in
  addition $A, B$ are graded $\mathbb{K}$-algebras (for some grading
  on $\mathbb{K}$), with $B$ graded local with unique
  graded maximal ideal given by $B_{>0}$,  $P$ is a graded module, and $\psi$ a graded
  homomorphism.  
\end{definition}

We'll want to consider representations of such algebras where some fixed
ideal $I\subset B$ acts nilpotently under every inclusion
$\psi(B\otimes \cdots \otimes B\otimes I \otimes B\otimes \cdots \otimes B)$.  We can
express this as a topological condition.

Consider $B$ as a
topological ring with the $I$-adic topology, and the obvious induced
topologies on $B^{\otimes n}$ and $Z$.  Let $\widehat{B^{\otimes n}}$
and $\widehat{Z}$ be the corresponding completions of these algebras.
The former topology is just
the $I^{(n)}$-adic topology for $I^{(n)}$ the sum of all ideals of the
form $B\otimes \cdots \otimes I\otimes \cdots \otimes B$.
\begin{lemma}
  The subspace topology on $Z$ agrees with the 
  $I'=Z\cap I^{(n)}$-adic topology on this ring.  Alternatively, the
  $I^{(n)}$-adic topology on $B^{\otimes n}$ is the coarsest
  topological ring structure
  such that the inclusion of $Z$, with the $I'$-adic topology, is continuous.
\end{lemma}
\begin{proof}
  Obviously $ (I')^m\subset (I^{(n)})^k$, so the $I'$-adic topology is
  finer than the subspace topology.  In order to show the opposite, we need only show that for any fixed $m$, we have
  $(I^{(n)})^k\cap Z\subset (I')^m$ for all $k\gg 0$.  This will
  follow if $(I^{(n)})^k\subset B^{\otimes n}\cdot  I'$ for some $k$
  since $Z\cap(B^{\otimes n}\cdot  (I')^m)=(I')^m$  as a simple
  calculation with projection to invariants (i.e. the Reynolds
  operator) shows.  This will
 will follow if these ideals have the same radical.

  Since $B^{\otimes n}$ is integral over $Z$, every generator of $
  I^{(n)}$ has a minimal polynomial over $Z$, whose coefficients, of
  course, lie in $I'$.  Thus, a power of this generator lies in $I'$,
  which establishes the desired equality of radicals.   
\end{proof}
 Now we wish to endow $A$ with the coarsest topology compatible with
 this topology on $B^{\otimes n}$, or equivalently on $Z$.  This is induced by the bases $J_m=A (I^{(n)})^m
 A$ or $J_m'=A (I')^mA$, which give equivalent topologies by the
 equality $\sqrt{I^{(n)}}=\sqrt{B^{\otimes n}\cdot  I'}\subset
 B^{\otimes n}$.

 If $(A,B,\psi,P)$ is graded, and $I\subset B$ is the unique graded
 maximal ideal, then there is another description of this topology:
 \begin{lemma}\label{lem:grading-equivalent}
    If $(A,B,\psi,P)$ is graded, and $I=B_{>0}\subset B$ is the unique graded
 maximal ideal, then the topology on $A$ is equivalent to usual
 topology induced by the grading, i.e. the span $G_k$ of the elements of
 degree $\geq k$ is a neighborhood of 0, and these form a basis of
 such neighborhoods.
\end{lemma}
\begin{proof}
The algebra $A$ is finitely generated as a
$Z$-module and thus there is some integer
$M\geq 0$ such that the generators of $ A$ as a $Z$-module have
degrees in the interval $[-M,M]$.  Since the unique graded maximal
ideal of $Z$ is $Z_{>0}$, 
this shows that $G_kG_{m}\subset G_{k+m-M}$.  In particular, since $(I')^m\subset G_m$, we have $J_m'\subset G_{m-2M}$
for all $m$.

Since $Z$ is Noetherian,
$Z_{>0}/Z_{>0}^2$ is a finite dimensional graded vector space over the field $Z/Z_{>0}$, and
we can also assume that all the degrees appearing are $\leq M$ (by
increasing $M$ if necessary).  Note
that this means that all elements of degree $>kM$ lie in $Z_{>0}^k$.  

We know that elements of degree $\geq (k+1)M$
elements of $A$ are spanned by the products of generators with
elements of $Z$ of degree $\geq kM$.  As have observed, these elements
of $Z$ must lie in $Z_{>0}^k$.  Thus we have that $G_{(k+1)M}\subset J_k'$.
Thus, these
topologies are equivalent.  
\end{proof}
 \begin{definition}
   Let $\widehat{A}$ be the completion of $A$ with respect to this
   topology, and $\widehat{P}=\widehat{A}\otimes_AP$.  
 \end{definition}

 \begin{lemma}\label{lem:faithful-completion}
   The completion $\widehat{P}$ is a faithful representation of
   $\widehat{A}$, and is free over $\widehat{B^{\otimes n}}$ of the
   same rank as $P$ over $B^{\otimes n}$.  
 \end{lemma}
 \begin{proof}
Note that we have an injective map
$A\to \End_{Z}(P)$.  The projectivity of $P$
over $Z$ implies that $ \End_{Z}(P)\cong \Hom_Z(P,Z)\otimes_Z P$ is also
projective over $Z$.
Thus, the induced map
\[\widehat{A}\to \End_{\widehat{Z}}(\widehat{P})\cong
  \widehat{{Z}}\otimes_{Z}\End_{{Z}}(P).
\]
giving the action of $\widehat{A}$ agrees with the base change by
$\widehat{Z}$ of the original action map.  This remains injective by
the flatness of $A$ over $Z$.  
 \end{proof}

\section{Type O}

\subsection{Hecke algebras}
We will follow the conventions of \cite{BKKL} concerning Hecke
algebras. 
Our basic object is $\gls{mH}(\pq)$, the {\bf affine Hecke algebra}.  
Let us fix our assumptions on base fields and parameters:
\begin{itemize}
\item [$(*)$] Let $\K$ be a field of any characteristic.  Fix a element
$q\in \K\setminus\{0,1\}$; let $e$ the multiplicative order of $q$
(which may be $\infty$).  Let  $d(\pq)=1+d_1h+\cdots$ be a formal power series in
$\K[[z]]$, which satisfies $d(h_1+h_2)=d(h_1)d(h_2)$, and let $\pq=qd(\pq)$.  
\end{itemize}
Differentiating, we see that this is only possible if $d(h)=e^{d_1h}$; in
particular, if $\K$ has positive characteristic, we must have
$d_1=0$, whereas if $K$ has characteristic 0, this makes sense for any
$d_1$.


The algebra $\gls{mH}(\pq)$ is generated by $\{X_1^{\pm 1},\dots,X_n^{\pm 1}
\}\cup \{T_1,\dots, T_{n-1}\}$ with the relations:
\begin{align*}
  X_r^{\pm 1} X_s^{\pm 1}&= X_s^{\pm 1} X_r^{\pm 1}& T_r^2&=(\pq-1)T_r
+\pq\\
T_rX_rT_r&=\pq X_{r+1} & T_rT_{r+1}T_r&=T_{r+1}T_r T_{r+1}\\
T_rX_s&= X_sT_r\qquad  (r\neq s,s+1)&T_rT_s&=T_sT_r\qquad (r  \neq
s\pm 1)
 \end{align*}
The subalgebra generated by the $T_i$'s alone is a copy of the {\bf
  (finite) Hecke algebra} $\gls{mHfin}$, and the subalgebra generated by the
$X_i^{\pm 1}$ is a copy of the Laurent polynomial ring
$\mathcal{C}=\K[[h]][X_1^{\pm 1},\dots,X_n^{\pm 1}]$.
 
In this paper, we'll rely heavily on a diagrammatic visualization of
this algebra.
\begin{definition}
  Let a rank $n$ {\bf type O diagram} be a collection of $n$ curves in
  $\R\times [0,1]$ with each curve mapping diffeomorphically to
  $[0,1]$ via the projection to the $y$-axis.  Each curve is allowed
  to carry any number of squares or the formal inverse of a square.
  We assume that these curves have no triple points or tangencies, no
  squares lie on crossings and consider these up to isotopies that
  preserve these conditions.
\end{definition}
An example of such a rank 5 diagram is given below:
\[
\begin{tikzpicture}[baseline,very thick,green!50!black, xscale=2,yscale=1.3]
  \draw (-.5,-1) to[out=90,in=-90] (-1,0) to[out=90,in=-90](.5,1);
  \draw (.5,-1) to[out=90,in=-90] (1,1);
  \draw  (1,-1) to[out=90,in=-90] 
  node[midway,fill=green!50!black,inner sep=3pt]{} (0,1);
  \draw (-1, -1) to[out=90,in=-90] (-.5,0) to[out=90,in=-90] (-1,1);
  \draw (0,-1) to[out=90,in=-90] (-.5,1);
\end{tikzpicture}\]
As usual, we can compose these by taking $ab$ to be the diagram where
we place $a$ on top of $b$ and attempt to match up the bottom of $a$
and top of $b$.  If the number of strands is the same, the result is
unique up to isotopy, and if it is different, we formally declare the
result to be $0$.

The rank $n$ {\bf type O affine Hecke algebra} is the quotient of the
span of these diagrams over $\K[[h]]$ by the local relations:

\newseq


\begin{equation*}\subeqn\label{Hecke-1}
    \begin{tikzpicture}[scale=.7,baseline]
      \draw[very thick,green!50!black](-3,0) +(-1,-1) -- +(1,1); \draw[very thick,green!50!black](-3,0) +(1,-1) --
      node[pos=.8,fill=green!50!black,inner sep=3pt]{} +(-1,1) ;
      \node at (-1.5,0){$-$}; \draw[very thick,green!50!black](0,0) +(-1,-1) -- +(1,1); \draw[very thick,green!50!black](0,0) +(1,-1) --  node[pos=.2,fill=green!50!black,inner sep=3pt]{}
      +(-1,1); 
    \end{tikzpicture}\hspace{4mm}=\hspace{4mm}
    \begin{tikzpicture}[scale=.7,baseline]
      \draw[very thick,green!50!black](-3,0) +(-1,-1) --  node[pos=.2,fill=green!50!black,inner sep=3pt]{}+(1,1); \draw[very thick,green!50!black](-3,0) +(1,-1) -- +(-1,1); 
      \node at (-1.5,0){$-$}; \draw[very thick,green!50!black](0,0) +(-1,-1) --
      node[pos=.8,fill=green!50!black,inner sep=3pt]{} +(1,1); \draw[very thick,green!50!black](0,0) +(1,-1) -- +(-1,1)
     ; \node at (2,0){$=$}; \draw[very
      thick,green!50!black](4,0) +(-1,-1) -- node[midway,fill=green!50!black,inner
      sep=3pt]{}+(-1,1); \draw[very
      thick,green!50!black](4,0) +(0,-1) -- +(0,1); \node
      at (5,0){$-\,\,\pq$}; \draw[very thick,green!50!black](7,0) +(-1,-1) -- +(-1,1)
    ; \draw[very thick,green!50!black](7,0) +(0,-1) --
      node[midway,fill=green!50!black,inner sep=3pt]{} +(0,1);
    \end{tikzpicture}
  \end{equation*}
  \begin{equation*}\subeqn\label{Hecke-2}
    \begin{tikzpicture}[very thick,scale=.9,baseline,green!50!black]
      \draw(-2.8,0) +(0,-1) .. controls (-1.2,0) ..  +(0,1)
      ; \draw (-1.2,0) +(0,-1) .. controls
      (-2.8,0) ..  +(0,1) ; 
    \end{tikzpicture}\hspace{4mm}
= (1+\pq)\hspace{5mm}
    \begin{tikzpicture}[very thick,scale=.9,baseline,green!50!black]
      \draw (-2.8,-1)--(-1.2,1);  \draw (-2.8,1)--(-1.2,-1); 
    \end{tikzpicture}
  \end{equation*}
 \begin{equation*}\subeqn\label{Hecke-triple}
    \begin{tikzpicture}[very thick,scale=.9,baseline,green!50!black]
      \draw (-3,0) +(1,-1) -- +(-1,1); \draw
      (-3,0) +(-1,-1) -- +(1,1) ; \draw
      (-3,0) +(0,-1) .. controls (-4,0) ..  +(0,1); 
    \end{tikzpicture}\hspace{4mm}-\hspace{4mm}
\begin{tikzpicture}[very thick,scale=.9,baseline,green!50!black]
\draw (1,0) +(1,-1) -- +(-1,1)
     ; \draw (1,0) +(-1,-1) -- +(1,1)
      ; \draw (1,0) +(0,-1) .. controls
      (2,0) ..  +(0,1); 
    \end{tikzpicture}\hspace{4mm}
=   \pq \hspace{4mm} \begin{tikzpicture}[very thick,scale=.9,baseline,green!50!black]
      \draw (-3,0) +(1,-1) -- +(1,1); \draw
      (-3,0) +(-1,-1) -- +(0,1) ; \draw
      (-3,0) +(0,-1) --  +(-1,1);
    \end{tikzpicture}\hspace{4mm}-\pq  \hspace{4mm} \begin{tikzpicture}[very thick,scale=.9,baseline,green!50!black]
      \draw (-3,0) +(-1,-1) -- +(-1,1); \draw
      (-3,0) +(1,-1) -- +(0,1) ; \draw
      (-3,0) +(0,-1) --  +(1,1);
    \end{tikzpicture}
  \end{equation*}
  \begin{remark}
    We want to make sure that the reader notices the distinction here
    between ``relations'' and ``local relations.''  Here ``relations''
    has the usual algebraic meaning: generators of the kernel of the
    homomorphism to an algebra of the free associative algebra on the
    generators.  However, ``local relations'' means something a bit
    more subtle: whenever we have two diagrams which are identical
    outside a small region, and match the two sides of the equation,
    then they are set equal.  Of course, the effect this has depends
    on what is allowed in the rest of the diagram.

    So a relation like
    \ref{Hecke-1} can be applied in type O diagrams (as in this
    section) or in type W diagrams, which we'll introduce later.
    While the pictures on the page are the same, the induced relations
    are different, since we have different rules for how the rest of
    the diagram is constructed.  Thus in later sections, we will
    refer back to these local relations, but apply them in a different
    diagrammatic framework.
  \end{remark}
  
  \begin{remark}
    In the degenerate case, we can write a similar set of local relations,
    replacing (\ref{Hecke-1}--\ref{Hecke-triple}) with the local relations:
\begin{equation*}\subeqn\label{deg-Hecke-1}
    \begin{tikzpicture}[scale=.6,baseline]
      \draw[very thick,green!50!black](-3,0) +(-1,-1) -- +(1,1); \draw[very thick,green!50!black](-3,0) +(1,-1) --
      node[pos=.8,fill=green!50!black,inner sep=3pt]{} +(-1,1) ;
      \node at (-1.5,0){$-$}; \draw[very thick,green!50!black](0,0) +(-1,-1) -- +(1,1); \draw[very thick,green!50!black](0,0) +(1,-1) --  node[pos=.2,fill=green!50!black,inner sep=3pt]{}
      +(-1,1); 
    \end{tikzpicture}\hspace{3mm}=\hspace{3mm}
    \begin{tikzpicture}[scale=.6,baseline]
      \draw[very thick,green!50!black](-3,0) +(-1,-1) --  node[pos=.2,fill=green!50!black,inner sep=3pt]{}+(1,1); \draw[very thick,green!50!black](-3,0) +(1,-1) -- +(-1,1); 
      \node at (-1.5,0){$-$}; \draw[very thick,green!50!black](0,0) +(-1,-1) --
      node[pos=.8,fill=green!50!black,inner sep=3pt]{} +(1,1); \draw[very thick,green!50!black](0,0) +(1,-1) -- +(-1,1)
     ; \node at (2,0){$=$}; \draw[very
      thick,green!50!black](4,0) +(-1,-1) -- node[midway,fill=green!50!black,inner
      sep=3pt]{}+(-1,1); \draw[very
      thick,green!50!black](4,0) +(0,-1) -- +(0,1); \node
      at (5,0){$-$}; \draw[very thick,green!50!black](7,0) +(-1,-1) -- +(-1,1)
    ; \draw[very thick,green!50!black](7,0) +(0,-1) --
      node[midway,fill=green!50!black,inner sep=3pt]{} +(0,1); \node
      at (8,0){$-$};
\draw[very thick,green!50!black](10,0) +(-1,-1) -- +(-1,1)
    ; \draw[very thick,green!50!black](10,0) +(0,-1) --+(0,1);
    \end{tikzpicture}
  \end{equation*}
  \begin{equation*}\subeqn\label{deg-Hecke-2}
    \begin{tikzpicture}[very thick,scale=.9,baseline,green!50!black]
      \draw(-2.8,0) +(0,-1) .. controls (-1.2,0) ..  +(0,1)
      ; \draw (-1.2,0) +(0,-1) .. controls
      (-2.8,0) ..  +(0,1) ; 
    \end{tikzpicture}\hspace{4mm}
= \,\,2\hspace{5mm}
    \begin{tikzpicture}[very thick,scale=.9,baseline,green!50!black]
      \draw (-2.8,-1)--(-1.2,1);  \draw (-2.8,1)--(-1.2,-1); 
    \end{tikzpicture}
  \end{equation*}
 \begin{equation*}\subeqn\label{deg-Hecke-triple}
    \begin{tikzpicture}[very thick,scale=.9,baseline,green!50!black]
      \draw (-3,0) +(1,-1) -- +(-1,1); \draw
      (-3,0) +(-1,-1) -- +(1,1) ; \draw
      (-3,0) +(0,-1) .. controls (-4,0) ..  +(0,1); 
    \end{tikzpicture}\hspace{4mm}-\hspace{4mm}
\begin{tikzpicture}[very thick,scale=.9,baseline,green!50!black]
\draw (1,0) +(1,-1) -- +(-1,1)
     ; \draw (1,0) +(-1,-1) -- +(1,1)
      ; \draw (1,0) +(0,-1) .. controls
      (2,0) ..  +(0,1); 
    \end{tikzpicture}\hspace{4mm}
=   \hspace{4mm} \begin{tikzpicture}[very thick,scale=.9,baseline,green!50!black]
      \draw (-3,0) +(1,-1) -- +(1,1); \draw
      (-3,0) +(-1,-1) -- +(0,1) ; \draw
      (-3,0) +(0,-1) --  +(-1,1);
    \end{tikzpicture}\hspace{4mm}-  \hspace{4mm} \begin{tikzpicture}[very thick,scale=.9,baseline,green!50!black]
      \draw (-3,0) +(-1,-1) -- +(-1,1); \draw
      (-3,0) +(1,-1) -- +(0,1) ; \draw
      (-3,0) +(0,-1) --  +(1,1);
    \end{tikzpicture}
  \end{equation*}
  \end{remark}

It may not be immediately clear what the additional value of this graphical
presentations is.  However, this perspective
will lead us to generalizations of the affine Hecke algebra which we
call types W, F and WF.
  \begin{theorem}\label{type-O-Hecke}
The algebra $\gls{mH}(\pq)$ is isomorphic to the rank $n$ type O Hecke
algebra via the map sending $T_r+1$ to the crossing of the $r$th and
$r+1$st strands, and $X_r$ to the square on the $r$th strand, as shown below:
 \begin{equation}\label{Hecke-gens}
    \tikz[baseline]{
      \node[label=below:{$X_j$}] at (0,0){ 
        \tikz[very thick,xscale=1.2,green!50!black]{
          \draw (-.5,-.5)-- (-.5,.5);
          \draw (.5,-.5)-- (.5,.5) node [midway,fill=green!50!black,inner
          sep=2.5pt]{};
          \draw (1.5,-.5)-- (1.5,.5);
          \node at (1,0){$\cdots$};
          \node at (0,0){$\cdots$};
        }
      };
      \node[label=below:{$T_j+1$}] at (4.5,0){ 
        \tikz[very thick,xscale=1.2, green!50!black]{
          \draw (-.5,-.5)-- (-.5,.5);
          \draw (.1,-.5)-- (.9,.5);
          \draw (.9,-.5)-- (.1,.5);
          \draw (1.5,-.5)-- (1.5,.5);
          \node at (1,0){$\cdots$};
          \node at (0,0){$\cdots$};
        }
      };
    }\vspace{-2mm}
  \end{equation}
  \end{theorem}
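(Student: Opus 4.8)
The plan is to realize the stated assignment as an isomorphism by constructing it together with an inverse at the level of generators, and then checking that both maps are well defined. Write $D$ for the type O Hecke algebra, and let $\sigma_r\in D$ denote the crossing of strands $r$ and $r+1$ and $X_r\in D$ the square on strand $r$. Define $\phi\colon \mH_h\to D$ on generators by setting $\phi(X_r^{\pm1})$ equal to the (inverse) square on strand $r$ and $\phi(T_r)=\sigma_r-\bone$, and define $\psi\colon D\to \mH_h$ by $\sigma_r\mapsto T_r+1$ and $X_r\mapsto X_r$. On generators $\psi\phi$ and $\phi\psi$ are the identity, so once both are shown to be algebra homomorphisms they will be mutually inverse.

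To see that $\phi$ is well defined I would verify the defining relations of $\mH_h$ among the images in $D$. The commutation relations $X_rX_s=X_sX_r$, $T_rX_s=X_sT_r$ for $s\neq r,r+1$, and $T_rT_s=T_sT_r$ for $|r-s|>1$ hold by planar isotopy, the relevant elementary diagrams being supported on disjoint strands. The quadratic relation $T_r^2=(\pq-1)T_r+\pq$ is, after substituting $\sigma_r=T_r+\bone$, precisely the bigon relation (\ref{Hecke-2}), since $\sigma_r^2=(1+\pq)\sigma_r$ unwinds to it. The braid relation $T_rT_{r+1}T_r=T_{r+1}T_rT_{r+1}$ results from the triple-point relation (\ref{Hecke-triple}) after expanding each crossing as $T+\bone$ and using (\ref{Hecke-2}) and isotopy to cancel the lower-order terms. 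Finally, the square-past-crossing relation (\ref{Hecke-1}) reads $\sigma_rX_r-X_{r+1}\sigma_r=X_r-\pq X_{r+1}$; substituting $\sigma_r=T_r+\bone$ this becomes $T_rX_r-X_{r+1}T_r=(1-\pq)X_{r+1}$, which together with the quadratic relation is equivalent to $T_rX_rT_r=\pq X_{r+1}$. These are all routine and I would not belabor them.

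The substantive point is that $\psi$ is well defined, which is a coherence statement: the element of $\mH_h$ assigned to a diagram must not depend on how the diagram is cut into elementary pieces. I would first argue by a generic-projection (Morse-theoretic) argument that any type O diagram is isotopic to one in which all crossings and all squares occur at distinct heights, so that it is literally a product of the elementary generators $\sigma_r$ and $X_r^{\pm1}$; this simultaneously shows that $D$ is generated by crossings and squares. The ambiguity in such a factorization is generated by a short list of local moves, each of which must be shown to be respected in $\mH_h$: exchanging the heights of two elementary pieces supported on disjoint strands (respected because the corresponding images commute in $\mH_h$), and the three genuine moves (\ref{Hecke-1})--(\ref{Hecke-triple}), which under $\psi$ translate into the identities verified in the previous paragraph. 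Because the strands are monotone in the vertical direction there are no cup/cap configurations, and hence no further isotopy moves to consider; note in particular that pulling apart a bigon is \emph{not} an admissible isotopy, which is exactly why (\ref{Hecke-2}) must be imposed as a relation. I expect this coherence check to be the main obstacle, as it requires a careful inventory of the admissible local moves and a verification that the allowed isotopies force no relation beyond the three listed.

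Granting well-definedness, $\phi$ and $\psi$ are homomorphisms that are mutually inverse on generators; since $\mH_h$ is generated by the $X_r^{\pm1}$ and $T_r$ and, by the projection argument, $D$ is generated by the $\sigma_r$ and $X_r^{\pm1}$, it follows that $\psi\phi=\id$ and $\phi\psi=\id$, so $\mH_h\cong D$. As a consistency check one sees that $\phi$ carries the Bernstein basis $\{X^{\mathbf a}T_w\mid \mathbf a\in\Z^n,\, w\in S_n\}$ of $\mH_h$ to the straightened diagrams obtained by moving all squares to the bottom and writing the permutation part as a reduced-word product of crossings, so that no diagrams collapse and $D$ is free of the expected rank. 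The same generator correspondence is compatible with the completion defining $\hmH_h$, giving the isomorphism in the form stated, under which $T_r+1$ and $X_r$ correspond to the crossing and the square as in (\ref{Hecke-gens}).
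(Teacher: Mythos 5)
Your proposal is correct and takes essentially the same route as the paper: both establish the two mutually inverse homomorphisms by the same relation translations (the bigon relation $\sigma_r^2=(1+\pq)\sigma_r$ versus the quadratic relation, the triple-point relation versus the braid relation, and the square--crossing relation versus $T_rX_rT_r=\pq X_{r+1}$). The only difference is that you spell out the coherence/factorization argument showing a diagram's image is independent of how it is sliced into elementary pieces, a point the paper treats as routine and leaves implicit.
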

  \begin{proof}
\newseq
    We'll use the relations given in  \cite[\S 4]{BKKL} without
    additional citation.  The
    equations (\ref{Hecke-1}--\ref{Hecke-triple}) become  the relations: 
    \begin{align*}
      X_r(T_r+1)-(T_r+1)X_{r+1}
      &=T_rX_{r+1}+(1-\pq)X_{r+1}+X_r-T_rX_{r+1}-X_{r+1} \\ 
      &=X_r-\pq X_{r+1}\\
      X_{r+1}(T_r+1)-(T_r+1)X_{r}
      &=T_rX_{r}+(\pq-1)X_{r+1}+X_{r+1}-T_rX_{r}-X_{r}\\
      &=qX_{r+1}-X_r\\
   (T_r+1)^2&=T_r^2+2T_r+1\\
&=(\pq-1)T_r+\pq+2T_r+1\\
&=(1+\pq)(T_r+1)
  \end{align*}
   \begin{align*}
     (T_r+1)(T_{r+1}+1)(T_r+1)-(T_{r+1}+1)(T_{r}+1)(T_{r+1}+1)&=
     T_r^2+T_r-T_{r+1}^2 -T_{r+1}\\
     &=\pq(T_r-T_{r+1})
   \end{align*}
Similarly, one can easily derive the relations of the affine Hecke
from the diagrammatic ones given above.  This shows that we have an
isomorphism.
  \end{proof}
Note that if we instead sent the element $T_i-\pq$ to the crossing, we
would obtain  local relations which are quite similar to
(\ref{Hecke-1}--\ref{Hecke-triple}), but have a few subtle
differences:
\begin{equation*}\subeqn\label{qHecke-1}
    \begin{tikzpicture}[scale=.7,baseline,green!50!black]
      \draw[very thick](-3,0) +(-1,-1) -- +(1,1); \draw[very thick](-3,0) +(1,-1) --
      node[pos=.8,fill=green!50!black,inner sep=3pt]{} +(-1,1) ;
      \node at (-1.5,0){$-$}; \draw[very thick](0,0) +(-1,-1) -- +(1,1); \draw[very thick](0,0) +(1,-1) --  node[pos=.2,fill=green!50!black,inner sep=3pt]{}
      +(-1,1); 
    \end{tikzpicture}\hspace{4mm}=\hspace{4mm}
    \begin{tikzpicture}[scale=.7,baseline,green!50!black]
      \draw[very thick](-3,0) +(-1,-1) --  node[pos=.2,fill=green!50!black,inner sep=3pt]{}+(1,1); \draw[very thick](-3,0) +(1,-1) -- +(-1,1); 
      \node at (-1.5,0){$-$}; \draw[very thick](0,0) +(-1,-1) --
      node[pos=.8,fill=green!50!black,inner sep=3pt]{} +(1,1); \draw[very thick](0,0) +(1,-1) -- +(-1,1)
     ; \node at (2,0){$=$}; \draw[very
      thick](4,0) +(-1,-1) -- +(-1,1); \draw[very
      thick](4,0) +(0,-1) -- node[midway,fill=green!50!black,inner
      sep=3pt]{}+(0,1); \node[black]
      at (5,0){$-\,\,\pq$}; \draw[very thick](7,0) +(-1,-1) --  node[midway,fill=green!50!black,inner sep=3pt]{}+(-1,1)
    ; \draw[very thick](7,0) +(0,-1) --
     +(0,1);
    \end{tikzpicture}
  \end{equation*}
  \begin{equation*}\subeqn\label{qHecke-2}
    \begin{tikzpicture}[very thick,scale=.9,baseline,green!50!black]
      \draw(-2.8,0) +(0,-1) .. controls (-1.2,0) ..  +(0,1)
      ; \draw (-1.2,0) +(0,-1) .. controls
      (-2.8,0) ..  +(0,1) ; 
    \end{tikzpicture}\hspace{4mm}
= -(1+\pq)\hspace{5mm}
    \begin{tikzpicture}[very thick,scale=.9,baseline,green!50!black]
      \draw (-2.8,-1)--(-1.2,1);  \draw (-2.8,1)--(-1.2,-1); 
    \end{tikzpicture}
  \end{equation*}
 \begin{equation*}\subeqn\label{qHecke-triple}
    \begin{tikzpicture}[very thick,scale=.9,baseline,green!50!black]
      \draw (-3,0) +(1,-1) -- +(-1,1); \draw
      (-3,0) +(-1,-1) -- +(1,1) ; \draw
      (-3,0) +(0,-1) .. controls (-4,0) ..  +(0,1); 
    \end{tikzpicture}\hspace{4mm}-\hspace{4mm}
\begin{tikzpicture}[very thick,scale=.9,baseline,green!50!black]
\draw (1,0) +(1,-1) -- +(-1,1)
     ; \draw (1,0) +(-1,-1) -- +(1,1)
      ; \draw (1,0) +(0,-1) .. controls
      (2,0) ..  +(0,1); 
    \end{tikzpicture}\hspace{4mm}
=   \pq \hspace{4mm} \begin{tikzpicture}[very thick,scale=.9,baseline,green!50!black]
      \draw (-3,0) +(1,-1) -- +(1,1); \draw
      (-3,0) +(-1,-1) -- +(0,1) ; \draw
      (-3,0) +(0,-1) --  +(-1,1);
    \end{tikzpicture}\hspace{4mm}-\pq  \hspace{4mm} \begin{tikzpicture}[very thick,scale=.9,baseline,green!50!black]
      \draw (-3,0) +(-1,-1) -- +(-1,1); \draw
      (-3,0) +(1,-1) -- +(0,1) ; \draw
      (-3,0) +(0,-1) --  +(1,1);
    \end{tikzpicture}
  \end{equation*}
Our first task is to describe the completions that are of interest to
us.

Consider a finite subset
$\gls{U}\subset \K\setminus\{0\}$; as before, we endow this with a graph structure by adding an edge
from $u$ to $u'$ if $u'=qu$.  Note that for $\gls{U}$
chosen generically there will simply be no edges, and that under this
graph structure $\gls{U}$ will always be a union of segments and cycles with $e$
nodes (if $e<\infty$).

We will apply the results of Section \ref{sec:polyn-style-repr} in
this context with
\begin{equation}
\mathbb{K}=\K[[h]]\qquad A=\mH(\pq)\qquad
  B=\K[[h]][X^{\pm}]\qquad I=Bh+B \prod_{u\in \gls{U}}(X-u).\label{eq:Hecke-PR}
\end{equation}

One natural construction of modules over $\gls{mH}(\pq)$ is given by
induction from $\gls{mHfin} $, as discussed in \cite[\S
4.3]{MacHecke}; as discussed there, the result is free as a
$\mathcal{C}$-module if the original module is free over $\K[[h]]$,
with ranks matching.  In particular, applying this to the two 1-dimensional
representations of $\gls{mHfin}$, where this algebra acts by the
characters $\chi^\pm$ where
\[ \chi^+(T_i)=\pq\qquad \chi^-(T_i)=-1\] gives natural (signed) polynomial
representation \[\cP^\pm=\gls{mH}(\pq)\otimes_{\gls{mHfin}}\K[[h]];\] where
$\gls{mHfin}$ acts on $\K[[h]]$ via the homomorphism $\chi^\pm$.
\begin{lemma}
  The data of (\ref{eq:Hecke-PR}) defines a polynomial-style
  representation on $P=\cP^{\pm}$.  
\end{lemma}
\begin{proof}\mbox{}
  \begin{enumerate}
  \item  This freeness is clear from the basis in
    \cite[4.3]{MacBourbaki}; in fact, as
    $B^{\otimes n}=\mathcal{C}$-module, we have
    $\gls{mH}(\pq)\cong \mathcal{C}\otimes_{\K[[h]]}\gls{mHfin} $. 
    \item The center of the affine Hecke algebra $\gls{mH}(\pq)$ is precisely
    the symmetric Laurent polynomials $Z=\mathcal{C}^{S_n}$ by
    \cite[4.5]{MacBourbaki}.
    \item The module $\cP^{\pm}$ is faithful by
      \cite[(4.3.10)]{MacHecke}, and its freeness over $\mathcal{C}$
      has already been discussed.\qedhere
  \end{enumerate}
\end{proof}

Thus, as in Section \ref{sec:polyn-style-repr}, we have an induced
topology on $\mH(\pq)$ with completion $\hmH(\pq)$.  We could also
define $\hmH(\pq)$ as the completion of
$\gls{mH}(\pq)$ in the directed system of all quotients where the spectrum of
each $X_i$ lies in $\gls{U}$.  

We can
identify $\Spec(\mathcal{C})$ with $(\mathbb{A}^1\setminus
\{0\})^n\times  \widehat{\mathbb{A}}^1$ where the last factor has coordinate
$h$ and is completed at $0$. 
 Let $\mathcal{U}=\gls{U}^n\times \{0\}\subset \Spec(\mathcal{C})$.   This
 is the vanishing set of $I^{(n)}$ as defined in Section
 \ref{sec:polyn-style-repr}.  Thus, the closure of $\mathcal{C}$ in
 $\hmH(\pq)$ is the completion of $\mathcal{C}$ at this subscheme.  In
 particular, the identity in $\mathcal{C}$, and thus in $\hmH(\pq)$
 decomposes as a sum of idempotents $1=\sum_{\Bu\in \gls{U}^n}e_{\Bu}$.  These have the
 property that on any topological $\hmH(\pq)$-module $M$, we have that  
\[e_{\Bu}M=\{ g\in M \mid\lim_{N\to \infty}
  (X_j-{u_j})^Ng=0\},\]
and for any module, we have $M=\oplus e_{\Bu}M$.  


In particular,
we have that $\hmH(\pq)=\bigoplus_{\Bu\in \gls{U}^n}e_{\Bu}\hmH(\pq)=\bigoplus_{\Bu,\Bu'\in \gls{U}^n}e_{\Bu}\hmH(\pq) e_{\Bu'}$.  

\subsubsection{Formulas for the polynomial representation}
\label{sec:form-polyn-repr}

Now, let us study the action of $\mH(\pq)$ on its polynomial
representation $\cP^{\pm}$.  Denote the action of $S_n$ on $\gls{U}^n$ by $\Bu\mapsto \Bu^s$ for $s\in
S_n$; as usual, we let $s_i=(i, i+1)$. For any Laurent polynomial $F$,
we let  $F^{s_r}(X_1,\dots,X_n)=F(X_1,\dots,X_{r+1},X_r,\dots, X_n)$.  

For notational clarity, we denote $\hone=1\otimes 1\in \cP^-$, so this
representation is generated by this vector, subject to the relation
$T_i \hone=-\hone$.  As in \cite[(4.3.3)]{MacHecke}, one can calculate the action of $T_i$ on
$F \hone$ 
for any Laurent polynomial $F$; this is easiest to see if we expand
$F=F_0+(X_i-X_{i+1})F_1$ where $F_0$ and $F_1$ are $s_i$-invariant
Laurent polynomials.  Thus, we have that 
\begin{align*}
  T_i F\hone&= T_i F_0\hone+T_i(X_i-X_{i+1})F_1\hone\\
  &=-F_0\hone+(X_i-X_{i+1})F_1\hone+2(1-\pq)
X_{i+1}F_1\hone\\
&=  -F^{s_i}\hone+ (1-\pq)
X_{i+1}\frac{F^{s_i}-F}{X_{i+1}-X_i}\hone.
\end{align*}
Thus, we have that 
\[(T_i+1) F\hone =\Big(F-F^{s_i}+ (1-\pq)
X_{i+1}\frac{F^{s_i}-F}{X_{i+1}-X_i}\Big)\hone+\frac{X_i-\pq X_{i+1}}{X_{i+1}-X_i} (F^{s_i}-F)\hone.\]

The Hecke algebra acts faithfully on this
representation by \cite[(4.3.10)]{MacHecke}, so we can identify the affine Hecke algebra with a
subalgebra of operators on $\cP^\pm$.

Similarly, the representation $\cP^+$ is
generated by an element $\hone^+$ satisfying $T_i\hone^+=\pq \hone^+$.
The action of $\gls{mH}(\pq)$ in this case is given by the formula
\[T_iF\hone^+=\pq F^{s_i}\hone^++(1-\pq)X_{i+1}\frac{F^{s_i}-F}{X_{i+1}-X_i}\hone^+\]
so we have that 
\[(T_i-\pq)\hone^+=\frac{X_{i+1}-\pq X_i}{X_{i+1}-X_i}( F^{s_i}-F).\]

Consider the $\hmH(\pq)$-module $\hcP^\pm :=\hmH(\pq)
\otimes_{\gls{mH}(\pq)}\cP^\pm$.  It follows from Lemma
\ref{lem:faithful-completion} that:
\begin{lemma}\label{lem:Hecke-P}
   The module $\hcP^\pm$ is a rank 1 free module over the completion
   of $\mathcal{C} $  at the set $\mathcal{U}$, and this
   representation remains faithful.  The space $e_{\Bu}\hcP^\pm $ is isomorphic to
$\K[[(X_1-{u_1}),\dots, (X_n-{u_n}),h]]$ via the action map on
$e_{\Bu}\hone$.
 \end{lemma}

\subsection{KLR algebras}

We wish to define a similar completion of the KLR algebra $\gls{R}(h)$ for the
graph $\gls{U}$.  We use the conventions of Brundan and Kleshchev, but we
record the relations we need here for the sake of completeness and to
match our slightly more general context.  The rank\footnote{Note here
  that the ``rank'' $n$ has no relationship to the size of the set $\gls{U}$
(typically called the rank of the corresponding Kac-Moody algebra);
for any fixed $\gls{U}$, we get a different algebra for each positive
integer $n$.} $n$ KLR algebra $\gls{R}(h)$ attached to the Dynkin
diagram $\gls{U}$ is
generated over $\K[h]$ 
by elements $\{e(\Bu)\}_{\Bu\in \gls{U}^n}\cup \{y_1,\dots, y_n\}\cup \{\psi_1,\dots, \psi_{n-1}\}$
subject to the relations:
\begin{align*}
e(\Bu) e(\Bv) &= \delta_{\Bu,\Bv} e(\Bu);
\hspace{53mm}{\sum_{\Bu \in I^\alpha}} e(\Bu) = 1;\\
y_r e(\Bu) &= e(\Bu) y_r;
\hspace{53mm}\psi_r e(\Bu) = e(\Bu^{s_r}) \psi_r;\\
y_r y_s &= y_s y_r;\\
\psi_r y_s  &= y_s \psi_r\hspace{61.4mm}\text{if $s \neq r,r+1$};\\
\psi_r \psi_s &= \psi_s \psi_r\hspace{60.8mm}\text{if $s\neq r\pm 1$};\\
\psi_r y_{r+1} e(\Bu) 
&= 
\begin{cases}
  (y_r\psi_r+1)e(\Bu) &\hbox{if $u_r=u_{r+1}$},\\
  y_r\psi_r e(\Bu) \hspace{48mm}&\hbox{if $u_r\neq u_{r+1}$};
\end{cases}\\
y_{r+1} \psi_re(\Bu) &=
\begin{cases}
  (\psi_r y_r+1) e(\Bu)
  &\hbox{if $u_r=u_{r+1}$},\\
  \psi_r y_r e(\Bu) \hspace{48mm}&\hbox{if $u_r\neq u_{r+1}$};\\
\end{cases}\\
\psi_r^2e(\Bu) &=
\begin{cases}
0 \hspace{61mm}&\text{if $u_r = u_{r+1}$},\\
e(\Bu)&\text{if $u_r \neq q^{\pm 1}u_{r+1},u_{r+1}$},\\
(y_{r+1}-y_r+d_1h)e(\Bu)&\text{if $u_r = q^{-1}u_{r+1}, q\neq -1$},\\
(y_r - y_{r+1}+d_1h)e(\Bu)&\text{if $u_r = qu_{r+1}, q\neq -1$},\\
(y_r - y_{r+1}+d_1h)(y_{r+1} - y_{r}+d_1h) e(\Bu)&\text{if $u_r=-u_{r+1}, q=-1$};
\end{cases}
\end{align*}\begin{align*}
\psi_{r}\psi_{r+1} \psi_{r} e(\Bu)
&=
\begin{cases}
  (\psi_{r+1} \psi_{r} \psi_{r+1} +1)e(\Bu)&\text{if $u_r = u_{r+2}=q^{-1}u_{r+1}, q\neq -1$},\\
  (\psi_{r+1} \psi_{r} \psi_{r+1} -1)e(\Bu)&\text{if $u_r = u_{r+2}=qu_{r+1}, q\neq -1$},\\
  \big(\psi_{r+1} \psi_{r} \psi_{r+1} -2y_{r+1}  +y_r+y_{r+2}\big)e(\Bu)
&\text{if $u_r = u_{r+2}=-u_{r+1}, q= -1$},\\
  \psi_{r+1} \psi_{r} \psi_{r+1} e(\Bu)&\text{otherwise}.
\end{cases}
\end{align*}

\newseq
Just as in the Hecke case, there is a graphical
presentation for the KLR algebra.  Since this is covered in \cite{KLI}
and numerous other sources, we'll just record an example of an
appropriate KLR diagram here for comparison purposes:
\[
\begin{tikzpicture}[baseline,very thick,xscale=2,yscale=1.3]
  \draw (-.5,-1) to[out=90,in=-90]  node[below,at start]{$u_2$} (-1,0) to[out=90,in=-90](.5,1);
  \draw (.5,-1) to[out=90,in=-90] node[below,at start]{$u_4$} (1,1);
  \draw  (1,-1) to[out=90,in=-90] node[below,at start]{$u_5$}
  node[circle, midway,fill=black,inner sep=3pt]{} (0,1);
  \draw (-1, -1) to[out=90,in=-90]node[below,at start]{$u_1$} (-.5,0) to[out=90,in=-90] (-1,1);
  \draw (0,-1) to[out=90,in=-90] node[below,at start]{$u_3$} (-.5,1);
\end{tikzpicture}\] and write out the local relations here for convenience:
\begin{equation*}\subeqn\label{first-QH}
    \begin{tikzpicture}[scale=.8,baseline]
      \draw[very thick](-4,0) +(-1,-1) -- +(1,1) node[below,at start]
      {$u$}; \draw[very thick](-4,0) +(1,-1) -- +(-1,1) node[below,at
      start] {$v$}; \fill (-4.5,.5) circle (3pt);
      \node at (-2,0){=}; \draw[very thick](0,0) +(-1,-1) -- +(1,1)
      node[below,at start] {$u$}; \draw[very thick](0,0) +(1,-1) --
      +(-1,1) node[below,at start] {$v$}; \fill (.5,-.5) circle (3pt);
      \node at (4,0){unless $u=v$};
    \end{tikzpicture}
  \end{equation*}
\begin{equation*}\subeqn
    \begin{tikzpicture}[scale=.8,baseline]
      \draw[very thick](-4,0) +(-1,-1) -- +(1,1) node[below,at start]
      {$u$}; \draw[very thick](-4,0) +(1,-1) -- +(-1,1) node[below,at
      start] {$v$}; \fill (-3.5,.5) circle (3pt);
      \node at (-2,0){=}; \draw[very thick](0,0) +(-1,-1) -- +(1,1)
      node[below,at start] {$u$}; \draw[very thick](0,0) +(1,-1) --
      +(-1,1) node[below,at start] {$v$}; \fill (-.5,-.5) circle (3pt);
      \node at (4,0){unless $u=v$};
    \end{tikzpicture}
  \end{equation*}
\begin{equation*}\subeqn\label{nilHecke-1}
    \begin{tikzpicture}[scale=.8,baseline]
      \draw[very thick](-4,0) +(-1,-1) -- +(1,1) node[below,at start]
      {$u$}; \draw[very thick](-4,0) +(1,-1) -- +(-1,1) node[below,at
      start] {$u$}; \fill (-4.5,.5) circle (3pt);
      \node at (-2,0){$-$}; \draw[very thick](0,0) +(-1,-1) -- +(1,1)
      node[below,at start] {$u$}; \draw[very thick](0,0) +(1,-1) --
      +(-1,1) node[below,at start] {$u$}; \fill (.5,-.5) circle (3pt);
      \node at (2,0){$=$}; 
    \end{tikzpicture}
    \begin{tikzpicture}[scale=.8,baseline]
      \draw[very thick](-4,0) +(-1,-1) -- +(1,1) node[below,at start]
      {$u$}; \draw[very thick](-4,0) +(1,-1) -- +(-1,1) node[below,at
      start] {$u$}; \fill (-4.5,-.5) circle (3pt);
      \node at (-2,0){$-$}; \draw[very thick](0,0) +(-1,-1) -- +(1,1)
      node[below,at start] {$u$}; \draw[very thick](0,0) +(1,-1) --
      +(-1,1) node[below,at start] {$u$}; \fill (.5,.5) circle (3pt);
      \node at (2,0){$=$}; \draw[very thick](4,0) +(-1,-1) -- +(-1,1)
      node[below,at start] {$u$}; \draw[very thick](4,0) +(0,-1) --
      +(0,1) node[below,at start] {$u$};
    \end{tikzpicture}
  \end{equation*}
  \begin{equation*}\subeqn\label{black-bigon}
    \begin{tikzpicture}[very thick,scale=.8,baseline]

      \draw (-2.8,0) +(0,-1) .. controls (-1.2,0) ..  +(0,1)
      node[below,at start]{$u$}; \draw (-1.2,0) +(0,-1) .. controls
      (-2.8,0) ..  +(0,1) node[below,at start]{$v$}; 
   \end{tikzpicture}=\quad
   \begin{cases}
0 & u=v\\
     \begin{tikzpicture}[very thick,scale=.6,baseline=-3pt]
       \draw (2,0) +(0,-1) -- +(0,1) node[below,at start]{$v$};
       \draw (1,0) +(0,-1) -- +(0,1) node[below,at start]{$u$};
     \end{tikzpicture} & u\notin \{v,qv,q^{-1}v\}\\
   \begin{tikzpicture}[very thick,scale=.6,baseline=-3pt]
       \draw (2,0) +(0,-1) -- +(0,1) node[below,at start]{$v$};
       \draw (1,0) +(0,-1) -- +(0,1) node[below,at start]{$u$};\fill (2,0) circle (4pt);
     \end{tikzpicture}-\begin{tikzpicture}[very thick,scale=.6,baseline=-3pt]
       \draw (2,0) +(0,-1) -- +(0,1) node[below,at start]{$v$};
       \draw (1,0) +(0,-1) -- +(0,1) node[below,at start]{$u$};\fill (1,0) circle (4pt);
     \end{tikzpicture}+d_1h \begin{tikzpicture}[very thick,scale=.6,baseline=-3pt]
       \draw (2,0) +(0,-1) -- +(0,1) node[below,at start]{$v$};
       \draw (1,0) +(0,-1) -- +(0,1) node[below,at start]{$u$};
     \end{tikzpicture}& u=q^{-1}v,q\neq -1\\
  \begin{tikzpicture}[very thick,baseline=-3pt,scale=.6]
       \draw (2,0) +(0,-1) -- +(0,1) node[below,at start]{$v$};
       \draw (1,0) +(0,-1) -- +(0,1) node[below,at start]{$u$};\fill (1,0) circle (4pt);
     \end{tikzpicture}-\begin{tikzpicture}[very thick,scale=.6,baseline=-3pt]
       \draw (2,0) +(0,-1) -- +(0,1) node[below,at start]{$v$};
       \draw (1,0) +(0,-1) -- +(0,1) node[below,at start]{$u$};\fill (2,0) circle (4pt);
     \end{tikzpicture}+d_1h \begin{tikzpicture}[very thick,scale=.6,baseline=-3pt]
       \draw (2,0) +(0,-1) -- +(0,1) node[below,at start]{$v$};
       \draw (1,0) +(0,-1) -- +(0,1) node[below,at start]{$u$};
     \end{tikzpicture}& u=qv,q\neq -1\\
 - \Bigg(\begin{tikzpicture}[very thick,scale=.6,baseline=-3pt]
       \draw (2,0) +(0,-1) -- +(0,1) node[below,at start]{$v$};
       \draw (1,0) +(0,-1) -- +(0,1) node[below,at start]{$u$};\fill
       (1,0) circle (4pt); \node at (.5,0) {$2$};
     \end{tikzpicture}\Bigg)+2 \Bigg(\begin{tikzpicture}[very thick,scale=.6,baseline=-3pt]
       \draw (2,0) +(0,-1) -- +(0,1) node[below,at start]{$v$};
       \draw (1,0) +(0,-1) -- +(0,1) node[below,at start]{$u$};\fill (2,0) circle (4pt);\fill (1,0) circle (4pt);
     \end{tikzpicture}\Bigg)- \Bigg(\begin{tikzpicture}[very thick,scale=.6,baseline=-3pt]
       \draw (2,0) +(0,-1) -- +(0,1) node[below,at start]{$v$};
       \draw (1,0) +(0,-1) -- +(0,1) node[below,at start]{$u$};\fill
       (2,0) circle (4pt); \node at (2.5,0) {$2$}; \end{tikzpicture}\Bigg)+d_1^2h^2 \Bigg(\begin{tikzpicture}[very thick,scale=.6,baseline=-3pt]
       \draw (2,0) +(0,-1) -- +(0,1) node[below,at start]{$v$};
       \draw (1,0) +(0,-1) -- +(0,1) node[below,at start]{$u$};
     \end{tikzpicture}\Bigg)& u=-v,q=-1
   \end{cases}
  \end{equation*}
 \begin{equation*}\subeqn\label{triple-dumb}
    \begin{tikzpicture}[very thick,scale=.8,baseline=-3pt]
      \draw (-2,0) +(1,-1) -- +(-1,1) node[below,at start]{$w$}; \draw
      (-2,0) +(-1,-1) -- +(1,1) node[below,at start]{$u$}; \draw
      (-2,0) +(0,-1) .. controls (-3,0) ..  +(0,1) node[below,at
      start]{$v$}; \node at (-.5,0) {$-$}; \draw (1,0) +(1,-1) -- +(-1,1)
      node[below,at start]{$w$}; \draw (1,0) +(-1,-1) -- +(1,1)
      node[below,at start]{$u$}; \draw (1,0) +(0,-1) .. controls
      (2,0) ..  +(0,1) node[below,at start]{$v$}; \end{tikzpicture}=\quad
      \begin{cases} 
    \begin{tikzpicture}[very thick,scale=.6,baseline=-3pt]
     \draw (6.2,0)
      +(1,-1) -- +(1,1) node[below,at start]{$w$}; \draw (6.2,0)
      +(-1,-1) -- +(-1,1) node[below,at start]{$u$}; \draw (6.2,0)
      +(0,-1) -- +(0,1) node[below,at
      start]{$v$};     \end{tikzpicture}& u=w=qv,q\neq -1\\
    -\begin{tikzpicture}[very thick,scale=.6,baseline]
     \draw (6.2,0)
      +(1,-1) -- +(1,1) node[below,at start]{$w$}; \draw (6.2,0)
      +(-1,-1) -- +(-1,1) node[below,at start]{$u$}; \draw (6.2,0)
      +(0,-1) -- +(0,1) node[below,at
      start]{$v$};     \end{tikzpicture}& u=w=q^{-1}v,q\neq -1\\
 -\begin{tikzpicture}[very thick,scale=.6,baseline=-3pt]
     \draw (0,0)
      +(1,-1) -- +(1,1) node[below,at start]{$w$}; \draw (0,0)
      +(-1,-1) -- +(-1,1) node[below,at start]{$u$}; \draw (0,0)
      +(0,-1) -- +(0,1) node[below,at
      start]{$v$};  \fill
       (-1,0) circle (4pt);   \end{tikzpicture}-\begin{tikzpicture}[very thick,scale=.6,baseline]
     \draw (0,0)
      +(1,-1) -- +(1,1) node[below,at start]{$w$}; \draw (0,0)
      +(-1,-1) -- +(-1,1) node[below,at start]{$u$}; \draw (0,0)
      +(0,-1) -- +(0,1) node[below,at
      start]{$v$};  \fill
       (1,0) circle (4pt);   \end{tikzpicture}& u=w=-v,q= -1\\
      \end{cases}
  \end{equation*}

We will again apply the results of Section \ref{sec:polyn-style-repr},
now with 
\begin{equation}
\mathbb{K}=\K[h]\qquad A=\gls{R}(h)\qquad
  B=\K[h,y]\qquad I=Bh+By.\label{eq:KLR-PR}
\end{equation}
We let $C=B^{\otimes n}=\K[y_1,\dots,y_n,h]=\subset \gls{R}(h)$.
The algebra $\gls{R}(h)$ also has a natural polynomial representation $P$, defined
by Rouquier
in \cite[\S 3.2]{Rou2KM} and Khovanov and Lauda in \cite[\S 2.3]{KLI}.
 This representation $P$ is generated
by a single element $\bone$, with the relations
\[ \psi_ke_{\Bu}\bone=
\begin{cases}
  0 & u_k=u_{k+1}\\
  (y_{k+1}-y_k+d_1h)e_{\Bu^{s_k}}\bone & u_k=qu_{k+1}\\
  e_{\Bu^{s_k}}\bone & u_k\neq u_{k+1},qu_{k+1}.
\end{cases}
\]
This can be written as a sum of the images of $e_{\Bu}$, and we always
have that $e_{\Bu}P$ is a rank 1 free module over
$C$.  

Just as in the Hecke algebra, the action of $\psi_k$ on arbitrary
polynomials can be written in terms of Demazure operators.  For a
polynomial $f\in \K[[h]][y_1,\dots, y_n]$, we can describe the action
as 
\begin{equation}
 \psi_k f e_{\Bu}\bone=
\begin{cases}
 \displaystyle \frac{f^{s_k}-f}{y_{k+1}-y_k} e_{\Bu}\bone& u_k=u_{k+1}\\
  (y_{k+1}-y_k+d_1h)f^{s_k}e_{\Bu^{s_k}}\bone & u_k=qu_{k+1}\\
 f^{s_k} e_{\Bu^{s_k}}\bone & u_k\neq u_{k+1},qu_{k+1}.
\end{cases}\label{eq:KLR-action}
\end{equation}

\begin{lemma}
  The data of (\ref{eq:KLR-PR}) defines a graded polynomial-style
  representation on $P$.  
\end{lemma}
\begin{proof}\mbox{}
  \begin{enumerate}
  \item The algebra $\hR(h)$ is finitely generated free as a
$C$-module by  \cite[Cor. 2.10]{KLI}.  
  \item  By
    \cite[Thm. 2.9]{KLI}, $Z=C^{S_n} $ is central.
    \item The representation $P$ is faithful by \cite[Cor. 2.6]{KLI},
      and free over $C$ of rank $(\# \gls{U})^n$.  
    \end{enumerate}
    The graded property is clear from the definitions; $\K[h,y]$ is
    graded local as required because $h$ and $y$ have positive degree.
    \end{proof}
Let  $\hR(h)$ be the completion of $\gls{R}(h)$
respect to the induced topology and \[\hP\cong
\hR(h)\otimes_{\gls{R}(h)}P\cong \hat{Z}\otimes_{Z} P\]
be the completion of this polynomial representation.   By Lemma
\ref{lem:grading-equivalent}, this is the same as completing these
graded abelian groups with respect to their grading.  We can easily
deduce from Lemma \ref{lem:faithful-completion} that:
\begin{lemma}\label{lem:KLR-P}
The module $\hP$ over $\hR(h)$ is faithful, and the action of $C$ induces an isomorphism $e_{\Bu}\hP\cong \widehat{C}$, the completion of
this ring with respect to its grading topology.  
\end{lemma}

\subsection{Isomorphisms}
Let $b(h)\in 1+h+h^2\K[[h]]$ be a formal power series; if $d_1\neq 0$, we
assume that $b(h)=e^h$. Thus, we must have that
\begin{equation}
b(h_1)d(h_2)=b(h_1+d_1h_2).  \label{eq:b-d}
\end{equation}
Our approach will match Brundan and Kleshchev's if we choose
$b(h)=1+h$.  

\begin{lemma}\label{lem:gammap}
  There is a unique 
vector space isomorphism $\gamma_p\colon \hcP^-\to \hP$ defined by the formula
\begin{equation}
 \gamma_p((u_1^{-1}X_1)^{a_i}\cdots (u_n^{-1}X_n)^{a_n}e_{\Bu})=
\prod_{i=1}^nb(y_1)^{a_1}\cdots b(y_n)^{a_n}e_{\Bu}.\label{eq:poly-map}
\end{equation}
  In particular, under this map, the operator of multiplication by $X_i$ on
    $e_{\Bu} \hcP^-$ is sent to multiplication by $u_ib(y_i)$.  
\end{lemma}Here the subscript $p$ is not a parameter, but
  distinguishes this map from an isomorphism of algebras we'll define
  later.
  \begin{proof}
    By Lemma \ref{lem:Hecke-P}, the elements
    $(u_1^{-1}X_1)^{a_i}\cdots (u_n^{-1}X_n)^{a_n}e_{\Bu}$ are a basis
    of $\hcP^-$, so this map is well-defined.  We will check that it
    is an isomorphism on the image of each idempotent $e_{\Bu}$.  On
    this image, this map is induced by the ring homomorphism
    $\K[[h,(X_1-u_1),\dots, (X_n-u_n)]]\to \K[[h,y_1,\dots,y_n]]$
    sending $X_i-u_i\mapsto u_i(b(y_i)-1)$.  The induced map modulo
    the square of the maximal ideal sends $X_i-u_i\mapsto
    u_iy_i+\cdots$, and so defines an isomorphism of these completed
    polynomial rings.
    By Lemma \ref{lem:KLR-P}, this shows that
    the map is an isomorphism.
  \end{proof}
  Just as in Brundan and Kleshchev, it will be convenient for us to
  use different generators for $\hmH(\pq)$.  Let \[\Phi_r:=
  T_r+\sum_{\Bu\text{ s.t. }u_r\neq
    u_{r+1}}\frac{1-\pq}{1-X_rX_{r+1}^{-1}}e_{\Bu}+\sum_{\Bu\text{ s.t. }u_r=
    u_{r+1}}e_{\Bu}\]
We will freely use the relations involving these given in
\cite[Lem. 4.1]{BKKL}, the most important of which is 
\begin{equation}
\Phi_r e_{\Bu}=e_{\Bu^{s_r}}\Phi_r\label{eq:Phi-e}
\end{equation}
Let  \[\varphi_r(y_r,y_{r+1})=\frac{u_rb(y_r)-\pq
  u_{r+1}b(y_{r+1})}{u_{r+1}b(y_{r+1}) -u_rb(y_r)}=\frac{u_rb(y_r)-q
  u_{r+1}b(y_{r+1}+d_1h)}{u_{r+1}b(y_{r+1}) -u_rb(y_r)}  \]
where the second equality holds by \eqref{eq:b-d}.   Also, let
$\beta(w,z)=\frac{b(w)-b(z)}{w-z};$ note that this is an invertible
element of $\K[[w,z]]$.  
Thus, we have that 
\begin{itemize}
\item $\varphi_r(y_r,y_{r+1})$ is an invertible element of 
  $\K[[h,y_r,y_{r+1}]]$ if and only if $u_r\neq qu_{r+1},u_{r+1}$.  
\item If
  $u_r= qu_{r+1}$, then we have \[\varphi_r(y_r,y_{r+1})=(y_r-y_{r+1})\frac{\beta(y_r,y_{r+1}+d_1h)}{q^{-1} -1+q^{-1}b(y_{r+1}) -b(y_r)}.\]
  This fraction is an invertible power series, since both the numerator
  and denominator have non-zero constant terms.
\item if $u_r=u_{r+1}$,
  then \[\varphi_r(y_r,y_{r+1})=\frac{b(y_r)-\pq b(y_r)}{b(y_{r+1})
    -b(y_r)}=\frac{1}{y_{r+1}
    -y_r}\frac{1-qd(h)+b(y_r)-qd(h)b(y_r)}{\beta(y_{r+1} ,y_r)}\]
  which is also invertible.  
\end{itemize}
Thus we can define an invertible power series by \[A^{\Bu}_{r}=
\begin{cases}
  \displaystyle
\varphi_r(y_r,y_{r+1})(y_{r+1} -y_r)
& u_r =u_{r+1}\\
   \displaystyle \frac{\varphi_r(y_r,y_{r+1})}{y_{r+1}-y_r+d_1h}&
  u_r=qu_{r+1}\\
 \displaystyle \varphi_r(y_r,y_{r+1})&
 u_r\neq u_{r+1}, qu_{r+1}.
\end{cases}
\]

\begin{theorem}\label{O-isomorphism}
  The isomorphism $\gamma_p$ induces an isomorphism $\gamma\colon \hmH(\pq)\cong
  \hR(h)$ such that \[\gamma(X_r)=\sum_{\Bu}u_rb(y_r)e_{\Bu} \qquad \gamma(\Phi_r)=\sum_{\Bu}
  A_r^{\Bu}\psi_re_{\Bu}\]
  which intertwines these two representations, if either $d(h)=1$ (and
  $b(h)$ is arbitrary) or 
  $d(h)=b(h)=e^h$.
\end{theorem}
\begin{proof} 
The match $\gamma(X_r)=\sum_{\Bu}u_rb(y_r)e_{\Bu}$ is clear from
the definition of the map \eqref{eq:poly-map}.  Thus, we turn to considering $\gamma(\Phi_r)$.
Using \eqref{eq:Phi-e} and the definition, one
can easily calculate that
\begin{align*}
  \Phi_r e_{\Bu} \hone&=
  \begin{cases}
    \displaystyle \frac{X_r-\pq X_{r+1}}{X_{r+1}-X_r}
    e_{\Bu^{s_r}} \hone& u_r\neq u_{r+1}\\
    0 & u_r= u_{r+1}
  \end{cases}\\
  \Phi_r (X_{r+1}-X_r) e_{\Bu}\hone&=
  \begin{cases}
   (\pq X_{r+1}-X_r) e_{\Bu^{s_r}} \hone & u_r\neq u_{r+1}\\2(\pq X_{r+1}-X_r)e_{\Bu^{s_r}} \hone& u_r= u_{r+1}
  \end{cases}
\end{align*}
Using the commutation of $\Phi_r$ with symmetric Laurent polynomials
in the $X_i^{\pm 1}$'s, we obtain a general form of action of this
operator on an
arbitrary Laurent polynomial $F\in \K[h,X_1^{\pm 1},\dots, X_n^{\pm
  1}]$.  
\begin{equation}
\Phi_r F(X_1,\dots,X_n) e_{\Bu} \hone =
\begin{cases}
 \displaystyle \frac{X_r-\pq X_{r+1}}{X_{r+1}-X_r}F^{s_r}
  e_{\Bu^{s_r}} \hone& u_r\neq u_{r+1}\\
\displaystyle\frac{X_{r}- \pq X_{r+1}}{X_{r+1}-X_r}(F^{s_r}-F) e_{\Bu}
\hone & u_r= u_{r+1}
\end{cases}\label{eq:1}
\end{equation}

 Now, consider how this operator acts if we intertwine with the
 isomorphism $\gamma_p$;  substituting into the formulas \eqref{eq:1},
 we obtain that for a power series $f\in \K[[h,y_1,\dots, y_n]]$, 
\[\gamma(\Phi_r) f(y_1,\dots, y_n)e_{\Bu}\mathbbm{1} =\begin{cases}
 \displaystyle \varphi_r(y_r,y_{r+1})f^{s_r}
  e_{\Bu^{s_r}} \mathbbm{1}& u_r\neq u_{r+1}\\
\displaystyle \varphi_r(y_r,y_{r+1})
  ( f^{s_r}-f )e_{\Bu}\mathbbm{1}& u_r= u_{r+1}
\end{cases}\]

Thus from \eqref{eq:KLR-action}, 
we immediately obtain that $A^{\Bu}_{r}\psi_r e(\Bu)=\Phi_r e(\Bu).$
Since $A^\Bu_r$ is invertible, this immediately shows that the image
of $\hR(h)$ lies in that of $\hmH(\pq)$ and {\it vice versa}.  Thus, we
obtain an induced isomorphism between these algebras.
\end{proof}

\section{Type W}
\label{sec:weight-gener}

\subsection{Type W Hecke algebras}
\label{sec:type-w-hecke}

The isomorphism of Theorem \ref{O-isomorphism} can be generalized a bit further to include not just KLR
algebras but also weighted KLR algebras, a generalization introduced
by the author in \cite{WebwKLR}.  

Fix a real number $\ck\neq 0$. 
\begin{definition}
A rank $n$ {\bf type W diagram} consists of strands $\R\times [0,1]$ like in a type O
diagram defined above, with addition that we draw a dashed line $\ck$
units to the right of each strand, (which we interpret as $-\ck$ units
left if $\ck<0$). We call this a {\bf ghost}, and require that there are no
triple points or tangencies involving any combination of strands or
ghosts.  We also only consider these diagrams equivalent if they are related by
an isotopy that avoids these tangencies and double points.
\end{definition}
An example of a rank 5 type W diagram with $g<0$ is given below:
\[
\begin{tikzpicture}[baseline,very thick,green!50!black, xscale=2.5,yscale=1.4]
  \draw (-.5,-1) to[out=90,in=-90] (-1,0) to[out=90,in=-90](.5,1);
  \draw[dashed] (-1.25,-1) to[out=90,in=-90] (-1.75,0) to[out=90,in=-90](-.25,1);
  \draw (.5,-1) to[out=90,in=-90] (1,1);
    \draw[dashed] (-.25,-1) to[out=90,in=-90] (.25,1);
  \draw  (1,-1) to[out=90,in=-90] 
  node[midway,fill=green!50!black,inner sep=3pt]{} (0,1);
    \draw[dashed]  (.25,-1) to[out=90,in=-90] (-.75,1);
  \draw (-1, -1) to[out=90,in=-90] (-.5,0) to[out=90,in=-90] (-1,1);
  \draw (0,-1) to[out=90,in=-90] (-.5,1);
  \draw[dashed] (-1.75, -1) to[out=90,in=-90] (-1.25,0) to[out=90,in=-90] (-1.75,1);
  \draw[dashed] (-.75,-1) to[out=90,in=-90] (-1.25,1);
\end{tikzpicture}\]

\begin{definition}\label{def:waha}
The rank $n$ {\bf type W affine Hecke algebra} (WAHA)  $\waha_{\mathscr{B}}(\pq)$ for some collection $\mathscr{B}$
of finite subsets $B_i\subset \R$ is the $\K[[h]]$-span of all rank $n$ type W Hecke diagrams such that endpoints of the strands on the lines
$y=0$ and $y=1$ form a subset in $\mathscr{B}$, modulo the local relations:
\newseq
\begin{equation*}\subeqn\label{nilHecke-2}
    \begin{tikzpicture}[scale=.7,baseline,green!50!black]
      \draw[very thick](-3,0) +(-1,-1) -- +(1,1); \draw[very thick](-3,0) +(1,-1) --
      node[pos=.8,fill=green!50!black,inner sep=3pt]{} +(-1,1) ;
      \node at (-1.5,0){$-$}; \draw[very thick](0,0) +(-1,-1) -- +(1,1); \draw[very thick](0,0) +(1,-1) --  node[pos=.2,fill=green!50!black,inner sep=3pt]{}
      +(-1,1); 
    \end{tikzpicture}\hspace{4mm}=\hspace{4mm}
    \begin{tikzpicture}[scale=.7,baseline]
      \draw[very thick,green!50!black](-3,0) +(-1,-1) --  node[pos=.2,fill=green!50!black,inner sep=3pt]{}+(1,1); \draw[very thick,green!50!black](-3,0) +(1,-1) -- +(-1,1); 
      \node at (-1.5,0){$-$}; \draw[very thick,green!50!black](0,0) +(-1,-1) --
      node[pos=.8,fill=green!50!black,inner sep=3pt]{} +(1,1); \draw[very thick,green!50!black](0,0) +(1,-1) -- +(-1,1)
     ; \node[black] at (2,0){$=$}; \draw[very
      thick,green!50!black](4,0) +(-1,-1) -- +(-1,1); \draw[very
      thick,green!50!black](4,0) +(0,-1) -- +(0,1); 
    \end{tikzpicture}
  \end{equation*}
  \begin{equation*}\subeqn\label{NilHecke3}
    \begin{tikzpicture}[very thick,scale=.9,baseline,green!50!black]
      \draw(-2.8,0) +(0,-1) .. controls (-1.2,0) ..  +(0,1)
      ; \draw (-1.2,0) +(0,-1) .. controls
      (-2.8,0) ..  +(0,1) ; 
    \end{tikzpicture}\hspace{4mm}
= 0\qquad \qquad 
    \begin{tikzpicture}[very thick,scale=.9,baseline,green!50!black]
      \draw (-3,0) +(1,-1) -- +(-1,1); \draw
      (-3,0) +(-1,-1) -- +(1,1) ; \draw
      (-3,0) +(0,-1) .. controls (-4,0) ..  +(0,1); 
    \end{tikzpicture}\hspace{4mm}=\hspace{4mm}
\begin{tikzpicture}[very thick,scale=.9,baseline,green!50!black]
\draw (1,0) +(1,-1) -- +(-1,1)
     ; \draw (1,0) +(-1,-1) -- +(1,1)
      ; \draw (1,0) +(0,-1) .. controls
      (2,0) ..  +(0,1); 
    \end{tikzpicture}\hspace{4mm}
  \end{equation*}
\[ \subeqn\label{green-ghost-bigon1}
\begin{tikzpicture}[very thick,xscale=1.3,baseline=25pt,green!50!black]
 \draw (1,0) to[in=-90,out=90]  (1.5,1) to[in=-90,out=90] (1,2)
;
  \draw[dashed] (1.5,0) to[in=-90,out=90] (1,1) to[in=-90,out=90] (1.5,2);
  \draw (2.5,0) to[in=-90,out=90]  (2,1) to[in=-90,out=90] (2.5,2);
\node[black] at (3,1) {$=$};
  \draw (3.7,0) --node[midway,fill,inner sep=3pt]{} (3.7,2) 
 ;
  \draw[dashed] (4.2,0) to (4.2,2);
  \draw (5.2,0) -- (5.2,2);
\node[black] at (5.6,1) {$-\pq$};
  \draw (6.2,0) -- (6.2,2);
  \draw[dashed] (6.7,0)-- (6.7,2);
  \draw (7.7,0) -- node[midway,fill,inner sep=3pt]{} (7.7,2);
\end{tikzpicture}
\]
\[ \subeqn\label{green-ghost-bigon2}
\begin{tikzpicture}[very thick,xscale=1.3,baseline=25pt,green!50!black]
 \draw[dashed]  (1,0) to[in=-90,out=90]  (1.5,1) to[in=-90,out=90] (1,2)
;
  \draw(1.5,0) to[in=-90,out=90] (1,1) to[in=-90,out=90] (1.5,2);
  \draw (2,0) to[in=-90,out=90]  (2.5,1) to[in=-90,out=90] (2,2);
\node[black] at (3,1) {$=$};
  \draw[dashed] (3.7,0) --(3.7,2) 
 ;
  \draw (4.2,0) to node[midway,fill,inner sep=3pt]{}  (4.2,2);
  \draw (4.7,0) -- (4.7,2);
\node[black] at (5.6,1) {$-\pq$};
  \draw[dashed] (6.2,0) -- (6.2,2);
  \draw (6.7,0)-- (6.7,2);
  \draw (7.2,0) -- node[midway,fill,inner sep=3pt]{} (7.2,2);
\end{tikzpicture}
\]
\begin{equation*}\label{eq:triple-point-1}\subeqn
    \begin{tikzpicture}[very thick,xscale=1.5,baseline,green!50!black]
      \draw[dashed] (-3,0) +(.4,-1) -- +(-.4,1);
 \draw[dashed]      (-3,0) +(-.4,-1) -- +(.4,1); 
    \draw (-2,0) +(.4,-1) -- +(-.4,1); \draw
      (-2,0) +(-.4,-1) -- +(.4,1); 
 \draw (-3,0) +(0,-1) .. controls (-3.5,0) ..  +(0,1);\node[black] at (-1,0) {=};  \draw[dashed] (0,0) +(.4,-1) -- +(-.4,1);
 \draw[dashed]      (0,0) +(-.4,-1) -- +(.4,1); 
    \draw (1,0) +(.4,-1) -- +(-.4,1); \draw
      (1,0) +(-.4,-1) -- +(.4,1); 
 \draw (0,0) +(0,-1) .. controls (.5,0) ..  +(0,1);
\node[black] at (2.1,0) {$-\pq$};
     \draw (4,0)
      +(.4,-1) -- +(.4,1); \draw (4,0)
      +(-.4,-1) -- +(-.4,1); 
 \draw[dashed] (3,0)
      +(.4,-1) -- +(.4,1); \draw[dashed] (3,0)
      +(-.4,-1) -- +(-.4,1); 
\draw (3,0)
      +(0,-1) -- +(0,1);
    \end{tikzpicture}
  \end{equation*}
\begin{equation*}\label{eq:triple-point-2}\subeqn
    \begin{tikzpicture}[very thick,xscale=1.5,baseline,green!50!black]
      \draw (-3,0) +(.4,-1) -- +(-.4,1);
 \draw     (-3,0) +(-.4,-1) -- +(.4,1); 
\draw (-2,0) +(0,-1) .. controls (-2.5,0) ..  +(0,1);
 \draw[dashed] (-3,0) +(0,-1) .. controls (-3.5,0) ..  +(0,1);\node[black] at (-1,0) {$=$};  \draw (0,0) +(.4,-1) -- +(-.4,1);
 \draw   (0,0) +(-.4,-1) -- +(.4,1); 
    \draw[dashed] (0,0) +(0,-1) .. controls (.5,0) ..  +(0,1);
 \draw (1,0) +(0,-1) .. controls (1.5,0) ..  +(0,1);
\node[black] at (2,0)
      {$+$};   
 \draw (3,0)
      +(.4,-1) -- +(.4,1); \draw (3,0)
      +(-.4,-1) -- +(-.4,1); 
 \draw[dashed] (3,0)
      +(0,-1) -- +(0,1); \draw (4,0)
      +(0,-1) -- +(0,1); 
    \end{tikzpicture}
  \end{equation*}

\end{definition}
\begin{remark}
  As in the type O case, this algebra also has a degenerate analogue,
  where we replace
  (\ref{green-ghost-bigon1}--\ref{eq:triple-point-1}) with the
  equations 
\[ \subeqn\label{deg-green-ghost-bigon1}
\begin{tikzpicture}[very thick,xscale=1.3,baseline=25pt,green!50!black]
 \draw (1,0) to[in=-90,out=90]  (1.5,1) to[in=-90,out=90] (1,2)
;
  \draw[dashed] (1.5,0) to[in=-90,out=90] (1,1) to[in=-90,out=90] (1.5,2);
  \draw (2.5,0) to[in=-90,out=90]  (2,1) to[in=-90,out=90] (2.5,2);
\node[black] at (3,1) {$=$};
  \draw (3.7,0) --node[midway,fill,inner sep=3pt]{} (3.7,2) 
 ;
  \draw[dashed] (4.2,0) to (4.2,2);
  \draw (5.2,0) -- (5.2,2);
\node[black] at (5.6,1) {$-$};
  \draw (6.2,0) -- (6.2,2);
  \draw[dashed] (6.7,0)-- (6.7,2);
  \draw (7.7,0) -- node[midway,fill,inner sep=3pt]{} (7.7,2);
\node[black] at (8.4,1) {$-$};
  \draw (9,0) -- (9,2);
  \draw[dashed] (9.5,0)-- (9.5,2);
  \draw (10.5,0) -- (10.5,2);
\end{tikzpicture}
\]
\[ \subeqn\label{deg-green-ghost-bigon2}
\begin{tikzpicture}[very thick,xscale=1.3,baseline=25pt,green!50!black]
 \draw[dashed]  (1,0) to[in=-90,out=90]  (1.5,1) to[in=-90,out=90] (1,2)
;
  \draw(1.5,0) to[in=-90,out=90] (1,1) to[in=-90,out=90] (1.5,2);
  \draw (2,0) to[in=-90,out=90]  (2.5,1) to[in=-90,out=90] (2,2);
\node[black] at (3,1) {$=$};
  \draw[dashed] (3.7,0) --(3.7,2) 
 ;
  \draw (4.2,0) to node[midway,fill,inner sep=3pt]{}  (4.2,2);
  \draw (4.7,0) -- (4.7,2);
\node[black] at (5.6,1) {$-$};
  \draw[dashed] (6.2,0) -- (6.2,2);
  \draw (6.7,0)-- (6.7,2);
  \draw (7.2,0) -- node[midway,fill,inner sep=3pt]{} (7.2,2);
\node[black] at (7.8,1) {$-$};
  \draw[dashed] (8.4,0) -- (8.4,2);
  \draw (8.9,0)-- (8.9,2);
  \draw (9.4,0) -- (9.4,2);
\end{tikzpicture}
\]
\begin{equation*}\label{eq:deg-triple-point-1}\subeqn
    \begin{tikzpicture}[very thick,xscale=1.5,baseline,green!50!black]
      \draw[dashed] (-3,0) +(.4,-1) -- +(-.4,1);
 \draw[dashed]      (-3,0) +(-.4,-1) -- +(.4,1); 
    \draw (-2,0) +(.4,-1) -- +(-.4,1); \draw
      (-2,0) +(-.4,-1) -- +(.4,1); 
 \draw (-3,0) +(0,-1) .. controls (-3.5,0) ..  +(0,1);\node[black] at (-1,0) {=};  \draw[dashed] (0,0) +(.4,-1) -- +(-.4,1);
 \draw[dashed]      (0,0) +(-.4,-1) -- +(.4,1); 
    \draw (1,0) +(.4,-1) -- +(-.4,1); \draw
      (1,0) +(-.4,-1) -- +(.4,1); 
 \draw (0,0) +(0,-1) .. controls (.5,0) ..  +(0,1);
\node[black] at (2.1,0) {$-$};
     \draw (4,0)
      +(.4,-1) -- +(.4,1); \draw (4,0)
      +(-.4,-1) -- +(-.4,1); 
 \draw[dashed] (3,0)
      +(.4,-1) -- +(.4,1); \draw[dashed] (3,0)
      +(-.4,-1) -- +(-.4,1); 
\draw (3,0)
      +(0,-1) -- +(0,1);
    \end{tikzpicture}
  \end{equation*}
\end{remark}

By convention, we'll let $e_{B}$ be the diagram with vertical lines at
$x=b$ for $b\in B$, and use $X_i$ to represent the square on the $i$th
strand from left.
\begin{proposition}\label{W-poly}
  The WAHA $\waha_{\mathscr{B}}(\pq)$ for a set $\mathscr{B}$ has a polynomial representation
  \[P_{\mathscr{B}} := \oplus_{B\in \mathscr{B}}\K[[h]][Y_1^{\pm 1},\dots,
  Y_{|B|}^{\pm 1}]\]
defined by the rule that 
\begin{itemize}
\item Each crossing of the $r$ and $r+1$st strands acts by the Demazure operator \[\partial_r(F)=
  \frac{F^{s_r}-F}{Y_{r+1}-Y_{r}}.\]
\item 
A crossing between the $r$th strand and a ghost of $s$th strand acts
by
\begin{itemize}
\item the identity if $\ck <0$ and the
  strand is NE/SW or $\ck >0$ and the strand is NW/SE,
\item the multiplication operator of  $Y_r-qY_s$ if $\ck <0$ and
  the strand is NW/SE or $\ck >0$ and the strand is NE/SW
\end{itemize}
\item A square on the $r$th strand acts by the multiplication operator
  $Y_r$.  
\end{itemize}
\end{proposition}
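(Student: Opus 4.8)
The plan is to check that the assignments in the statement respect each defining relation of $\waha_{\mathscr{B}}$, interpreted as an identity of operators on $P_{\mathscr{B}}$. A diagram acts by reading it from bottom to top and composing the elementary operators attached to its crossings, ghost-crossings and squares; any two elementary pieces that are separated horizontally act on disjoint sets of the variables $Y_i$ and therefore commute. As a result every isotopy preserving the left-to-right order of the pieces is automatically respected, and the only substantive task is to verify the explicit local relations \eqref{nilHecke-2}, \eqref{NilHecke3}, \eqref{green-ghost-bigon1}, \eqref{green-ghost-bigon2}, \eqref{eq:triple-point-1} and \eqref{eq:triple-point-2}.

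The relations involving only solid strands reduce to standard facts about the Demazure operators $\partial_r$. The first half of \eqref{NilHecke3} is $\partial_r^2=0$, which holds because $\frac{F^{s_r}-F}{Y_{r+1}-Y_r}$ is $s_r$-invariant and $\partial_r$ annihilates $s_r$-invariant functions; the second half is the nilHecke braid identity $\partial_r\partial_{r+1}\partial_r=\partial_{r+1}\partial_r\partial_{r+1}$. The dot-slide relation \eqref{nilHecke-2} is the pair of operator identities $\partial_r M_{Y_r}-M_{Y_{r+1}}\partial_r=1$ and $M_{Y_r}\partial_r-\partial_r M_{Y_{r+1}}=1$, where $M_a$ denotes multiplication by $a$; both follow at once from the twisted Leibniz rule $\partial_r(fg)=\partial_r(f)\,g+f^{s_r}\partial_r(g)$ together with $\partial_r(Y_r)=1$.

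For the ghost relations I would first record the operator form of this Leibniz rule, namely $\partial_r M_a=M_{a^{s_r}}\partial_r+M_{\partial_r(a)}$. The ghost bigons \eqref{green-ghost-bigon1} and \eqref{green-ghost-bigon2} are then immediate: in a bigon between a strand and a ghost the two ghost-crossings carry opposite orientations, so by the rule in the statement exactly one acts as the identity and the other as multiplication by the linear ghost factor $Y_r-\pq Y_s$; the composite is $M_{Y_r-\pq Y_s}$, which is precisely the right-hand side written as a difference of two squares with the displayed coefficient $\pq$.

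The genuine obstacle, and the step requiring the most care, is the pair of mixed triple-point relations \eqref{eq:triple-point-1} and \eqref{eq:triple-point-2}, in which a strand or ghost is slid past the crossing of two strands. On one side a crossing (the Demazure operator $\partial_r$) is applied before the relevant ghost-crossing (a multiplication $M_a$ by a linear ghost factor), and on the other side in the opposite order; their difference is governed exactly by the commutator $\partial_r M_a-M_{a^{s_r}}\partial_r=M_{\partial_r(a)}$. Since the ghost factor is linear, $\partial_r(a)$ is a constant of the form $\pm 1$ or $\pm\pq$ --- for instance $\partial_r(Y_s-\pq Y_r)=-\pq$ whenever $s_r$ fixes $Y_s$ --- and these constants produce precisely the $\pq$-weighted correction diagrams on the right-hand sides. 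The real work is bookkeeping: in each configuration one must determine which of the ghost-crossings acts as the identity and which as a multiplication, a choice dictated by the sign of $\ck$ and by whether the relevant strand is oriented NE/SW or NW/SE, and then confirm that the resulting constants and the surviving orientations reproduce the signs and the factor $\pq$ displayed in \eqref{eq:triple-point-1} and \eqref{eq:triple-point-2}. Once all six relations are checked, the assignment descends to the quotient $\waha_{\mathscr{B}}$ and defines the desired representation on $P_{\mathscr{B}}$.
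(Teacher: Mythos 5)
Your proposal is correct and follows essentially the same route as the paper: both reduce the claim to checking the local relations (\ref{nilHecke-2})--(\ref{eq:triple-point-2}) as operator identities on $P_{\mathscr{B}}$, handling the solid-strand relations by standard Demazure calculus, the ghost bigons directly from the definitions, and the triple-point relations via the twisted Leibniz rule $\partial_r M_a = M_{a^{s_r}}\partial_r + M_{\partial_r(a)}$, whose constant term $\partial_r(a)=\pm 1,\pm\pq$ produces the correction diagrams. The paper's proof is exactly this argument, with the two triple-point cases written out as $\partial_s\circ(Y_r-qY_s)=(Y_r-qY_{s+1})\circ\partial_s-q$ and $(Y_r-qY_s)\circ\partial_r=\partial_r\circ(Y_{r+1}-qY_s)+1$.
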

\begin{proof}
The equations (\ref{nilHecke-2}--\ref{NilHecke3})  are the usual
relations satisfied by multiplication and Demazure operators.  The 
equations   (\ref{green-ghost-bigon1}--\ref{green-ghost-bigon2}) are clear from
the definition of the operators for ghost/strand crossings.  Finally,
the relations (\ref{eq:triple-point-1}--\ref{eq:triple-point-2}) are
calculation with Demazure operators similar to that which is standard
for triple points in various KLR calculi.
For example, assuming $\ck<0$ for (\ref{eq:triple-point-1}), the
LHS is \[\partial_s\circ (Y_r-qY_s)=(Y_r-qY_{s+1})\circ\partial_s-q\]
using the usual twisted Leibnitz rule for Demazure operators; this is
the RHS, so we are done.  On the other hand,
(\ref{eq:triple-point-2}) follows in a similar way from the equation
\[(Y_r-qY_s)\circ\partial_r = \partial_r \circ (Y_{r+1}-qY_s)+1.\]  This
completes the proof.
\end{proof}

\begin{proposition}\label{prop:waha-basis}
  The rank $n$ type W Hecke algebra $\waha_{\mathscr{B}}(\pq)$ has a basis over $\K[[h]]$ given by the products
  $e_{B}D_wX_1^{a_1}\cdots X_n^{a_n}e_{B'}$ for $w\in S_n$ and $(a_1,\dots,
  a_n)\in \Z^n$; here $D_w$ is a arbitrarily chosen diagram which
  induces the permutation $w$ on the endpoints at $y=0$ when they are
  associated to the endpoint at the top of same strand, and no pair of
  strands or ghosts cross twice.

  The action of  $\waha_{\mathscr{B}}(\pq)$ on its polynomial
  representation is faithful.
\end{proposition}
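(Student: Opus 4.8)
The plan is to prove this by the two-sided squeeze standard for basis theorems of KLR-type algebras: first show that the indicated products $e_{B}D_wX_1^{a_1}\cdots X_n^{a_n}e_{B'}$ span $\waha_{\mathscr{B}}$ over $\K[[h]]$, and then show that their images under the polynomial representation $P_{\mathscr{B}}$ of Proposition~\ref{W-poly} are $\K[[h]]$-linearly independent operators. Spanning gives an upper bound on the size of the algebra and independence of the action gives a matching lower bound, so the two together force the products to be a basis (and incidentally show that $P_{\mathscr{B}}$ is faithful).

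For the spanning step I would induct on the number of crossings in a type W diagram $d$, with a secondary induction on the number of squares. First, using \eqref{nilHecke-2} and the ghost relations \eqref{green-ghost-bigon1}--\eqref{green-ghost-bigon2}, any square may be slid along its strand toward the top of the diagram; each time it is pushed past a crossing the error terms (the $-\pq$ terms) have strictly fewer crossings and so already lie in the span by induction. This reduces us to diagrams in which all squares sit at the top, contributing a factor $X_1^{a_1}\cdots X_n^{a_n}$. Next, whenever two solid strands, or a strand and a ghost, cross one another twice, I apply the relevant bigon relation: \eqref{NilHecke3} removes a double crossing of two solid strands outright, while \eqref{green-ghost-bigon1}--\eqref{green-ghost-bigon2} replace a strand/ghost bigon by diagrams carrying a square (already handled) plus a term with fewer crossings. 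The triple-point relations \eqref{NilHecke3}, \eqref{eq:triple-point-1} and \eqref{eq:triple-point-2} let me commute crossings past one another at the cost of lower terms, so that the strand part of $d$ may be isotoped into a fixed reduced representative $D_w$ of its underlying permutation $w\in S_n$ in which no pair of strands or ghosts crosses twice. Thus $d$ lies in the span of the claimed products.

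For linear independence I would use the (now established) faithfulness of $P_{\mathscr{B}}$. Applying $e_{B}D_wX_1^{a_1}\cdots X_n^{a_n}e_{B'}$ to a monomial in the summand $\K[[h]][Y_1^{\pm 1},\dots,Y_{|B'|}^{\pm 1}]$, the crossings of $D_w$ act by a composite of Demazure operators $\partial_r$ and multiplications by the ghost factors $Y_r-qY_s$, while the squares act by multiplication by $Y_i$. Specializing $h=0$ and discarding the ghost-crossing multiplications, this is precisely the action of the nilHecke algebra on polynomials, for which it is classical that the divided-difference operators $\{\partial_w\}_{w\in S_n}$ form a basis over the polynomial ring. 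Tracking a leading term with respect to polynomial degree (to separate the exponent vectors $(a_1,\dots,a_n)$) and to the length of $w$ (to separate the $\partial_w$ components) shows that distinct products act by $\K[[h]]$-linearly independent operators, hence are linearly independent in $\waha_{\mathscr{B}}$.

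The main obstacle is the spanning step, specifically the bookkeeping forced by the ghosts: because every solid strand drags a ghost $\ck$ units to its right, an isotopy or relation applied to the strands also moves the ghosts, and one must check that the bigon and triple-point relations genuinely suffice to eliminate every double crossing among the enlarged collection of strands and ghosts without creating new ones elsewhere. Making the induction well-founded requires a complexity measure—total number of crossings among all strands and ghosts—that each reduction move strictly decreases modulo already-handled terms; this is delicate precisely in the ghost relations \eqref{green-ghost-bigon1}--\eqref{green-ghost-bigon2}, where the error terms carry a square rather than fewer crossings, so the square-sliding reduction must be interleaved correctly with the crossing reduction. Once this straightening is organized, the independence argument is a routine reduction to the nilHecke basis, the ghost factors $Y_r-qY_s$ contributing only multiplication operators that are absorbed into the polynomial coefficients and do not disturb the independence of the $\partial_w$ components.
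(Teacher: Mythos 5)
Your proposal is correct and takes essentially the same approach as the paper: spanning by induction on the number of crossings (sliding squares off through crossings, comparing different choices of $D_w$ via the triple-point relations modulo fewer crossings, and resolving innermost bigons with the bigon relations), and linear independence via a leading-term/triangularity analysis of the polynomial representation. The only differences are cosmetic: the paper slides the squares to the bottom (matching the stated normal form $e_B D_w X_1^{a_1}\cdots X_n^{a_n}e_{B'}$) rather than the top, and it expresses the triangularity by expanding $D_w=f_w w+\sum_{v<w}f_v v$ over the fraction field of rational functions rather than in terms of Demazure operators, but these amount to the same argument.
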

\begin{proof}
  This proof follows many similar ones in KLR theory.  These elements
  are linearly independent because the elements $D_w$ span the action
  of $\K[S_n]$ after extending scalars to the fraction field of
  rational functions, since $D_w=f_w w+\sum_{v<w} f_v v$ for some
  rational functions $f_v$ with $f_w\neq 0$.  Thus our proposed basis
  is linearly independent over $\K$ in this scalar extension, so must
  have been linearly independent before.

  Note that this shows that the action of these elements on the
  polynomial representation is linearly independent.  Thus, if we show
  that they span, it will show that the representation is faithful.  

  Now we need only show that they span.  Using relation
  (\ref{nilHecke-2}), we can assume that all squares are at the bottom
  of the diagram.  

Furthermore, any two choices of the diagram $D_w$ differ via a series
of isotopies and triple points, so relations
(\ref{NilHecke3},\ref{eq:triple-point-1},\ref{eq:triple-point-2}) show
that these diagrams differ by diagrams with fewer crossings between
strands and ghosts.  Thus, we
need only show that any diagram with a bigon can be written as a sum
of diagrams with fewer crossings.  

Now, assume we have such a bigon.  We should assume that it has no
smaller bigons inside it.  In this case, we can shrink the bigon,
using the relations
(\ref{NilHecke3},\ref{eq:triple-point-1},\ref{eq:triple-point-2})
whenever we need to move a strand through the top and bottom of the
bigon or a crossing out through its side.  Thus, we can ultimately
assume that the bigon is empty, and apply the relations (\ref{NilHecke3}--\ref{green-ghost-bigon2}).
\end{proof}

We now have the results we need to apply the results of Section
\ref{sec:polyn-style-repr}, in the case of
\begin{equation}
\mathbb{K}=\K[[h]]\qquad A=\waha_{\mathscr{B}}(\pq)\qquad
  B=\K[[h]][X^{\pm}]\qquad I=Bh+B \prod_{u\in \gls{U}}(X-u)\qquad P=\mathcal{P}_{\mathscr{B}}.\label{eq:wHecke-PR}
\end{equation}
The requisite freeness and the faithfulness of the polynomial
representation follow from Proposition
\ref{prop:waha-basis}, so this defines a polynomial style
representation.  Thus, we have an induced completion
$\gls{hwaha}_{\mathscr{B}}$ with faithful completed polynomial
representation by Lemma \ref{lem:faithful-completion}.

\subsubsection{Comparison with Hecke and Schur algebras}
\label{sec:comp-with-hecke}

Choose $\wellsep=\{B_s=\{s,2s,3s,\dots, ns\}\}$ for $s$ some real
number with $s\gg |g|$.  For every type O diagram on $n$ strands,
we can choose an isotopy representative such that the endpoints of the
diagram are precisely $B_s$ at both $y=0$ and $y=1$.  Furthermore, we
can choose this representative so that if we think of it as a type W
diagram and add ghosts, no strand is between a crossing
of strands and the corresponding ghost crossing.  Obviously we can do
this for individual crossings, and any diagram can be factored into
these.
\begin{theorem}\label{wdHecke}
  This embedding induces an isomorphism between the WAHA $\waha_{\wellsep}(\pq)$ and
  the honest affine Hecke algebra
  $\gls{mH}(\pq)$:
  \begin{itemize}
  \item If $\ck <0$, this isomorphism sends a single crossing to
    $T_i+1$.  That is, the diagrams satisfy the local relations
    (\ref{Hecke-1}--\ref{Hecke-triple}).
\item If $\ck>0$, this
    isomorphism sends a single crossing to $T_i-q$.  That is, the diagrams satisfy the local relations
    (\ref{qHecke-1}--\ref{qHecke-triple}).
  \end{itemize}
The polynomial representation defined above
  is intertwined by this map with the polynomial representation of
  $\gls{mH}(\pq)$ if $\ck <0$ and the signed polynomial representation if $\ck>0$.  
\end{theorem}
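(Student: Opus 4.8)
The plan is to obtain the isomorphism by comparing the actions of the two algebras on their polynomial representations, exactly as in Proposition~\ref{O-isomorphism}, rather than checking the relations (\ref{Hecke-1}--\ref{Hecke-triple}) or (\ref{qHecke-1}--\ref{qHecke-triple}) directly on diagrams. Since $\mathscr{O}$ consists of a single set, the representation of Proposition~\ref{W-poly} is the one space $P_{\mathscr{O}}=\K[[h]][Y_1^{\pm1},\dots,Y_n^{\pm1}]$, and I would identify it with the polynomial representation of $\mH_h$ via $Y_i=X_i$; under this identification a square and the element $X_i$ both act by multiplication by $X_i$, so half of the claim is immediate. Because $\mH_h$ acts faithfully on its polynomial representation, it then remains only to compute the operator by which the image of a single crossing acts and to match it with the operator of $T_i+1$ (for $\ck<0$), resp.\ of $T_i-\pq$ (for $\ck>0$).

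The essential point is the local computation. After adding ghosts to the bundled representative of a single crossing of the $i$th and $(i+1)$st strands chosen above, the hypothesis $s\gg|\ck|$ and the condition that no strand separates the strand crossing from its ghost crossing confine all interaction to the four crossings among these two strands and their ghosts: one strand--strand crossing, two strand--ghost crossings, and a ghost--ghost crossing. The ghost--ghost crossing contributes nothing, one strand--ghost crossing has the orientation acting by the identity, and the remaining one acts by multiplication, so the composite operator is a single multiplication applied after the Demazure operator $\partial_i$ coming from the strand crossing. Reading the labels of the multiplying crossing after the two strands have exchanged positions, the multiplication is by $Y_i-\pq Y_{i+1}$ when $\ck<0$ and by $Y_{i+1}-\pq Y_i$ when $\ck>0$. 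Hence the single crossing acts by $F\mapsto(Y_i-\pq Y_{i+1})\tfrac{F^{s_i}-F}{Y_{i+1}-Y_i}$, resp.\ $F\mapsto(Y_{i+1}-\pq Y_i)\tfrac{F^{s_i}-F}{Y_{i+1}-Y_i}$, which are precisely the operators displayed for $(T_i+1)$ on $\cP^{-}$ and for $(T_i-\pq)$ on $\cP^{+}$; this comparison simultaneously pins down which polynomial representation is intertwined in each case.

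With the generators matched, I would conclude as follows. The representation $P_{\mathscr{O}}$ is faithful: the basis proposition shows that the operators by which the basis elements $D_wX_1^{a_1}\cdots X_n^{a_n}$ act are linearly independent. Moreover the single crossings and the squares generate $\waha_{\mathscr{O}}$, since for the one-element family every $D_w$ is, modulo diagrams with strictly fewer crossings, a product of single crossings, while the squares yield all monomials in the $X_i^{\pm1}$. Consequently the subalgebra of $\End(P_{\mathscr{O}})$ generated by the images of the single crossings and squares coincides, by the operator identities of the previous paragraph, with the image of $\mH_h$ under its (faithful) polynomial representation. As both $\mH_h$ and $\waha_{\mathscr{O}}$ act faithfully, each is isomorphic to this common subalgebra, giving the desired isomorphism $\mH_h\cong\waha_{\mathscr{O}}$ sending $T_i+1$ (resp.\ $T_i-\pq$) to a single crossing and $X_i$ to a square; in particular the images of the single crossings satisfy (\ref{Hecke-1}--\ref{Hecke-triple}), resp.\ (\ref{qHecke-1}--\ref{qHecke-triple}).

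The genuine obstacle is the local bookkeeping of the second paragraph. One must fix the orientation conventions of Proposition~\ref{W-poly}, determine for each sign of $\ck$ which of the two strand--ghost crossings is the identity and which is the multiplication, and---most delicately---track which of $Y_i,Y_{i+1}$ labels the multiplying strand once the strand crossing has permuted the two positions. These choices are exactly what separate the case $\ck<0$ (the operator of $T_i+1$, intertwining $\cP^{-}$) from the case $\ck>0$ (the operator of $T_i-\pq$, intertwining $\cP^{+}$); once the single-crossing operator has been identified, every remaining step is formal.
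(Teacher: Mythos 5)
Your proposal is correct and is essentially the paper's own argument: the paper likewise proves the theorem by computing that the bundled crossing acts on the polynomial representation as $(Y_i-\pq Y_{i+1})\circ\partial_i$ for $\ck<0$ (resp.\ $(Y_{i+1}-\pq Y_i)\circ\partial_i$ for $\ck>0$), matching $T_i+1$ on $\cP^-$ (resp.\ $T_i-\pq$ on $\cP^+$), and then uses faithfulness of the polynomial representations together with the basis proposition to upgrade the operator identity to an algebra isomorphism. The only differences are cosmetic---the paper obtains injectivity by extending scalars to the fraction field and using freeness over the Laurent polynomials in the squares, whereas you identify both algebras with the same subalgebra of operators via faithfulness of $\mH_h$ on its polynomial representation---and your pairing of the representations ($\cP^-$ with $\ck<0$, $\cP^+$ with $\ck>0$) agrees with the paper's proof and displayed formulas rather than with the (apparently swapped) wording of the intertwining clause in the theorem statement.
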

This theorem shows that if we view type O diagrams as type W diagrams
where $|\ck|$ is sufficiently small that we cannot distinguish
between a strand and its ghost\footnote{Perhaps this will be easier if
  you take off your glasses.}, then the local relations
(\ref{Hecke-1}--\ref{Hecke-triple}) will be consequences of (\ref{nilHecke-2}--\ref{eq:triple-point-2}).
\begin{proof}
We'll consider the case where $\ck <0$.  We have that $T_i+1$ is sent to the diagram 
\begin{equation*}
    \begin{tikzpicture}[very thick,xscale=1.5,baseline,green!50!black]
      \draw (-2.5,0) +(.7,-1) -- +(-.7,1);
 \draw     (-2.5,0) +(-.7,-1) -- +(.7,1); 
      \draw[dashed] (-3,0) +(.7,-1) -- +(-.7,1);
 \draw [dashed]    (-3,0) +(-.7,-1) -- +(.7,1); 
    \end{tikzpicture}
  \end{equation*}
which sent by the polynomial representation of  the type W affine Hecke algebra
  representation to $(Y_r-\pq Y_{r+1})  \circ \partial_r$.  That is, we
  have $T_iF=-F^{s_r}+(1-\pq)Y_{r+1}\partial_r$.  Since
  $\waha_{\mathscr{B}}(\pq)$ acts faithfully on its polynomial representation,
  this shows that we have a map of the Hecke algebra to the WAHA;  the
  faithfulness of $\cP^-$ implies that this map is injective.  
  Since the diagram $D_w$ and the polynomials in the squares are in
  the image of this map, the map is surjective.  

The case $\ck>0$ follows similarly.
\end{proof}
Thus, the WAHA for any set containing $\wellsep$ is a ``larger'' algebra than the affine Hecke
algebra. The category of representations of affine Hecke algebras are
a quotient category of its representations via the functor $M\mapsto e_{\wellsep}M$, though in some cases, this
quotient will be an equivalence.  

For any composition $\Bk=(k_1,\dots, k_n)$ of $m$, we have an associated
quasi-idempotent $\epsilon_{\Bk}=\sum_{w\in S_{\Bk}}T_w$ symmetrizing for the associated Young
subgroup.  If $\Bk=(1,\dots, 1)$, then $\epsilon_{\Bk}=1$.

\begin{definition}\label{def:cS}
  The {\bf affine $q$-Schur algebra} $\cS(\pq,n,m)$, as defined in
  \cite[Def. 2.1.4]{GreenSchur}, is the algebra defined
  by
  \[\cS(\pq,n,m):=
  \operatorname{End}_{\gls{mH}(\pq)}\Big(\bigoplus_{|\Bk|=m}\epsilon_{\Bk}\gls{mH}(\pq)
  \Big)
  \] where the sum is over $n$-part compositions of $m$.
\end{definition}
Following \cite{GreenSchur}, we let $E(n,m)$ denote the $\cS(\pq,n,m)$-$\gls{mH}(\pq)$ bimodule $\bigoplus_{|\Bk|=m}\epsilon_{\Bk}\gls{mH}(\pq)$.

By a result of Jimbo \cite{Jimbo}, the affine Hecke algebra acts naturally on $M\otimes V^{\otimes n}$ for any finite
dimensional $U_{\pq}(\mathfrak{gl}_n)$-module $M$ and $V$ the defining
representation using universal R-matrices and Casimir operators; analogously, the
algebra $\cS(\pq,n,m)$ naturally acts on
\[\bigoplus _{|\Bk|=m}M\otimes \Sym^{ k_1}V\otimes \cdots\otimes
\Sym^{k_n}V\cong E(n,m)\otimes_{\gls{mH}(\pq)}M\otimes V^{\otimes n}.\]
Furthermore, the algebra $\cS(\pq,n,m)$ has a natural 
polynomial representation given by
\[\cP_{\cS}:=\bigoplus_{|\Bk|=m}
  \cP^{S_{\Bk}}\cong E(n,m)\otimes_{\gls{mH}(\pq)}\cP^-.\]
There is a more detailed exposition of this representation in \cite[\S 4]{MiSt}.
\begin{lemma}
  This representation is faithful.
\end{lemma}
\begin{proof}
  The algebra $\cS(\pq,n,m)$ have a basis $\phi^d_{\Bk,\Bk'}$ defined in
  \cite[Def. 2.2.3]{GreenSchur}.  This element is defined as a linear
  combination of left multiplications of elements of $\gls{mH}(\pq)$,
  restricted to $\epsilon_{\Bk}\gls{mH}(\pq)$.  Thus, any non-trivial linear
  combination of these elements has the same property.
  By the faithfulness of
  $\cP^-$, this implies that no non-trivial linear combination of
  $\phi^d_{\Bk,\Bk'}$ acts trivially.  That is, the action is
  faithful.  
\end{proof}

If we replace $\epsilon_{\Bk}$ by the anti-symmetrizing quasi-idempotent
$\epsilon_{\Bk}^-=\sum_{w\in S_{\Bk}} (-q)^{\ell(w)}T_w$, then we obtain the signed
$q$-Schur algebra $\cS_h^-(n,m)$, which instead acts on \[\bigoplus
_{|\Bk|=m}M\otimes \iwedge{ k_1}V\otimes \cdots\otimes
\iwedge{k_n}V.\]

The affine $q$-Schur algebra has a diagrammatic realization much like the
affine Hecke algebra.  For each composition $\mu=(\mu_1,\dots, \mu_n)$
of $m$, we let $C_\mu=\{i\epsilon+js\mid 0\leq i< \mu_j\}$ for some
fixed $0< \epsilon \ll g \ll s$, and let $\gls{Cset}$ be the
collection of these sets.  That is, we have groups of dots
corresponding to the parts of the composition, with sizes given by
$\mu_i$.  

In the type W affine Hecke algebra
$\waha_{\gls{Cset}}(\pq)$, we have an idempotent $e'_\mu$ which on each
group in $[js,js+\mu_j\epsilon]$ traces out the primitive idempotent
in the nilHecke algebra which acts as $\partial_{w_0} y^{\mu_j-1}_1\cdots
y_{\mu_j-1}$ in the polynomial representation.  For example, for
$\mu=(1,3,2)$, this idempotent is given by:
\[
\begin{tikzpicture}[very thick,xscale=1.5,baseline,green!50!black]
  \draw (0,-1) -- (0,1);
  \draw (4,-1) to[out=90,in=-90] node[pos=.2,fill=green!50!black,inner
     sep=3pt]{} (4.4, 1);
  \draw (4.4,-1) to[out=90,in=-90] (4, 1);
  \draw (2,-1) to[out=90,in=-90] node[pos=.35,fill=green!50!black,inner
     sep=3pt]{}node[pos=.175,fill=green!50!black,inner
     sep=3pt]{} (2.8, 1);
  \draw (2.8,-1) to[out=90,in=-90]  (2, 1);
  \draw (2.4,-1) .. controls (2.9,0) .. node[pos=.1,fill=green!50!black,inner
     sep=3pt]{}  (2.4, 1);
\end{tikzpicture}
\]
 Let
$\gls{eprime}=\sum_\mu e'_\mu$ be the sum of these idempotents over $m$-part
compositions of $n$.

\begin{theorem}\label{waha-Schur}
If $\ck<0$,   we have an isomorphism of algebras $\gls{eprime} \waha_{\gls{Cset}}(\pq) \gls{eprime}\cong \cS(\pq,n,m)$
  which induces an isomorphism of representations $\gls{eprime}
  P_{\gls{Cset}}\cong \cP_{\cS(\pq,n,m)}$.  Similarly, if $\ck>0$, we have an
  isomorphism of algebras $\gls{eprime} \waha_{\gls{Cset}}(\pq) \gls{eprime}\cong \cS_h^-(n,m)$.
\end{theorem}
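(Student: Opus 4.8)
The plan is to argue exactly as in Theorem~\ref{wdHecke}: I will realize both algebras as faithful algebras of operators on polynomial representations, build an isomorphism between those representations, match the generating operators, and finish with a freeness-plus-generic-isomorphism argument rather than by checking relations directly. By Proposition~\ref{W-poly} the algebra $\waha_{\mathscr{C}}$ acts faithfully on $P_{\mathscr{C}}=\bigoplus_\mu \K[[h]][Y_1^{\pm 1},\dots,Y_m^{\pm 1}]$, the sum running over the $n$-part compositions $\mu$ of $m$. The point of the cluster $C_\mu$ is that its strands sit at spacing $\epsilon$ while the ghosts lie a distance $\ck$ away, and $0<\epsilon\ll|\ck|\ll s$ forces every ghost to lie strictly between two clusters; thus no crossing internal to a block ever meets a ghost, so inside a block the relations are those of the nilHecke algebra $NH_{\mu_j}$. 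Consequently $e'_\mu$ is on each block the usual primitive idempotent of $NH_{\mu_j}$, and $e'_\mu P_{\mathscr{C}}$ is free of rank one over the ring $\K[[h]][Y^{\pm}]^{S_\mu}$ of block-symmetric Laurent polynomials.

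On the Hecke side, $\cP_{\cS_h(n,m)}=\bigoplus_{\Bk}\epsilon_{\Bk}\cP$, and since $T_i\epsilon_{\Bk}=\pq\,\epsilon_{\Bk}$ whenever $s_i\in S_{\Bk}$, each summand $\epsilon_{\Bk}\cP$ is likewise free of rank one over the block-symmetric Laurent polynomials $\K[[h]][X^{\pm}]^{S_{\Bk}}$. Recalling from Proposition~\ref{W-poly} that a square $X_r$ acts by multiplication by $Y_r$, symmetric functions in the $X$'s and in the $Y$'s coincide, so matching $\mu\leftrightarrow\Bk$ produces a $\K[[h]]$-linear isomorphism $\Theta\colon e'P_{\mathscr{C}}\cong\cP_{\cS_h(n,m)}$; establishing that $\Theta$ intertwines the two actions will simultaneously give the isomorphism of representations asserted in the theorem. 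On the diagonal this is immediate: both $e'_\mu\waha_{\mathscr{C}}e'_\mu$ and $\epsilon_{\Bk}\mH\epsilon_{\Bk}$ act by multiplication by block-symmetric Laurent polynomials, so $\Theta$ respects these operators by construction.

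It remains to match the off-diagonal operators, and this is the heart of the argument. The Schur algebra $\cS_h(n,m)=\bigoplus_{\Bk,\Bk'}\epsilon_{\Bk}\mH\epsilon_{\Bk'}$ is generated over the block-symmetric polynomials by the elementary merge-and-split maps $\epsilon_{\Bk}T_r\epsilon_{\Bk'}$ attached to a simple reflection exchanging two adjacent clusters. To realize these in $e'\waha_{\mathscr{C}}e'$ I would pass a strand of one block across a strand of its neighbour; because the intervening ghost now participates, the single-crossing computation of Theorem~\ref{wdHecke} shows that this crossing acts on the polynomial representation as $(Y_r-\pq Y_{r+1})\partial_r$, i.e.\ as $T_r+1$ when $\ck<0$. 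Sandwiching such crossings between $e'_\mu$ and $e'_{\mu'}$ and straightening the result with the nilHecke relations (\ref{NilHecke3}--\ref{eq:triple-point-2}) should reproduce exactly $\epsilon_{\Bk}T_r\epsilon_{\Bk'}$ on the symmetric subspaces, and this bookkeeping --- verifying that threading strands past the separating ghosts yields the double-coset generators and nothing more --- is the main obstacle.

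Granting the operator matching, both $e'\waha_{\mathscr{C}}e'$ and $\cS_h(n,m)$ are free modules over $\bigoplus_\mu\K[[h]][Y^{\pm}]^{S_\mu}$ with bases indexed by the minimal-length representatives of the double cosets $S_{\Bk}\backslash S_m/S_{\Bk'}$, and they carry the same image $\A\subseteq\End(e'P_{\mathscr{C}})$. After extending scalars to the fraction field of the block-symmetric polynomials the basis elements act as the linearly independent operators $\epsilon_{\Bk}T_w\epsilon_{\Bk'}$, so both actions are faithful and each algebra is identified with $\A$; hence $e'\waha_{\mathscr{C}}e'\cong\cS_h(n,m)$, intertwining $e'P_{\mathscr{C}}$ with $\cP_{\cS_h(n,m)}$. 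The case $\ck>0$ is identical word for word, except that Theorem~\ref{wdHecke} now sends each crossing to $T_r-\pq$, so throughout one replaces the symmetrizer $\epsilon_{\Bk}$ by the antisymmetrizer $\epsilon_{\Bk}^-$ and the unsigned polynomial representation by the signed one, landing in $\cS_h^-(n,m)$.
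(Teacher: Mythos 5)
Your strategy (compare faithful polynomial representations and match operators, in the spirit of Theorem \ref{wdHecke}) is reasonable in principle, but as written it leaves the actual content of the theorem unproven. The step you yourself flag as ``the main obstacle'' --- showing that crossings sandwiched between the nilHecke idempotents $e'_\mu$ and $e'_{\mu'}$ act on $e'P_{\mathscr{C}}$ exactly as the merge/split elements $\epsilon_{\Bk}T_r\epsilon_{\Bk'}$ act on $\cP_{\cS}$ --- is precisely where all the work lies, and you give no argument for it. Until that is done, $\Theta$ has only been shown to intertwine the block-symmetric multiplication operators, so neither the claimed isomorphism of representations nor the identification of the two algebras with a common operator algebra $\A$ gets off the ground. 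Moreover, your concluding step rests on two further unproven assertions: that $e'\waha_{\mathscr{C}}e'$ is free over the block-symmetric Laurent polynomials with basis indexed by minimal double-coset representatives (the basis proposition in the paper concerns $\waha_{\mathscr{C}}$ itself, and truncating by $e'$ is not automatic), and that $\cS_h(n,m)$ is generated over the block-symmetric polynomials by the elementary merges with a double-coset basis --- this is Green's theorem \cite[2.2.2]{GreenSchur}, which must be invoked rather than taken for granted.

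For comparison, the paper's proof avoids direct operator matching entirely. It uses $e_{B_s}\waha_{\mathscr{C}}e_{B_s}\cong\mH_h$ (Theorem \ref{wdHecke}) to view $e'e_{C_\mu}\waha_{\mathscr{C}}e_{B_s}$ as a right $\mH_h$-module, and shows by a one-crossing diagram computation that
\begin{equation*}
e'e_{C_\mu}D_ee_{B_s}(T_i+1)=(1+q)\,e'e_{C_\mu}D_ee_{B_s}
\end{equation*}
for $s_i$ in the Young subgroup, so that this element generates a copy of $\epsilon_\mu\mH_h$; a rank comparison over the fraction field upgrades this to an isomorphism $e'e_{C_\mu}\waha_{\mathscr{C}}e_{B_s}\cong\epsilon_\mu\mH_h$. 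The bimodule structure then produces the map $e'\waha_{\mathscr{C}}e'\to\End_{\mH_h}\big(\bigoplus_\mu\epsilon_\mu\mH_h\big)=\cS_h(n,m)$ for free; injectivity follows because both sides become the smash product $\K[S_n]\#\K[[h]](X_1,\dots,X_n)$ after generic extension of scalars, and surjectivity because $e'e_{C_\mu}D_we_{C_{\mu'}}e'$ maps to Green's basis element $\phi_w=\sum_{w'\in S_{\mu}wS_{\mu'}}T_{w'}$ modulo lower terms in Bruhat order. If you want to salvage your representation-theoretic route, the displayed eigenvector computation is the missing ingredient needed to carry out your ``sandwiching'' step; alternatively, adopting the bimodule argument replaces the bookkeeping you were worried about with a single local relation check.
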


  Setting $h=0$, we obtain an isomorphism between the WAHA $\gls{eprime} \waha_{\gls{Cset}}(\pq) \gls{eprime}$ (at $h=0$)
  with the usual affine Schur algebra for any field $\K$ and any
  $q\notin\{0,1\}$.  Since this isomorphism requires passing through a
  Morita equivalence, it is quite difficult to make it explicit.  A
  closely related isomorphism is shown in much greater detail by
  Miemietz and Stroppel in \cite{MiSt}, relating the affine Schur
  algebra and the quiver Schur algebra from \cite{SWschur}; presumably
  these results can ultimately be matched by tracing through the
  Morita equivalence of \cite[Th. 3.8]{WebwKLR}, but we will not trace
  through the details of doing so.  

\begin{proof}
First, consider the case $\ck<0$.  Consider the idempotent $e_{B_s}$ in $\gls{eprime} \waha_{\gls{Cset}}(\pq) \gls{eprime}$.  This
satisfies $e_{B_s}\waha_{\gls{Cset}}(\pq)e_{B_s}\cong \gls{mH}(\pq)$ by Theorem \ref{wdHecke}.

Thus, $\gls{eprime}e_{C_\mu}\waha_{\gls{Cset}}(\pq)e_{B_s}$ is naturally a right module
over $\gls{mH}(\pq)$.  We wish to show that it is isomorphic to $\epsilon_\mu
\gls{mH}(\pq)$.  Consider the diagram $e_{C_\mu}D_1e_{B_s}$.  Acting on the
right by $T_i+1$ with $(i,i+1)\in S_{\mu_1}\times \cdots \times S_{\mu_p}$ gives $e_{C_\mu}D_ee_{B_s}(T_i+1)=(q+1)
e_{C_\mu}D_1e_{B_s}$, since
\begin{equation*}
    \begin{tikzpicture}[very thick,xscale=1.5,baseline,green!50!black]
      \draw (.7,-1) to [out=135,in=-90]  (-.6,.3) to
      [out=90,in=-135] (-.1,1) ;
 \draw     (-.7,-1)to [out=45,in=-90]  (.6,.3) to
      [out=90,in=-45] (.1,1); 
      \draw[dashed] (.2,-1) to [out=135,in=-90]  (-1.1,.3) to
      [out=90,in=-135] (-.6,1) ;
 \draw [dashed]    (-1.2,-1) to [out=45,in=-90]  (.1,.3) to
      [out=90,in=-45] (-.4,1) ;
    \end{tikzpicture}\hspace{4mm}=\hspace{4mm}
   \begin{tikzpicture}[xscale=1.5,baseline,green!50!black]
     \draw[very thick](-3,0) +(-.7,-1) -- +(.2,1); \draw[very thick](-3,0) +(.7,-1) -- node[pos=.9,fill=green!50!black,inner
     sep=3pt]{}+(-.2,1)
     ; \draw[very thick,dashed](-3.5,0) +(-.7,-1) -- +(.2,1);
     \draw[very thick,dashed](-3.5,0)+(.7,-1) -- +(-.2,1) ;
      \node[black] at (-1.8,0){$-\pq$}; \draw[very thick](0,0) +(-.7,-1) --   node[pos=.9,fill=green!50!black,inner sep=3pt]{}+(.2,1); \draw[very thick](0,0) +(.7,-1) --  
     +(-.2,1); 
\draw[very thick,dashed](-.5,0) +(-.7,-1) -- +(.2,1);
\draw[very thick,dashed] (-.5,0) +(.7,-1) -- +(-.2,1);
    \end{tikzpicture}
  \end{equation*}
Applying (\ref{nilHecke-2}), the RHS is equal to $1+q$ times the
identity, plus diagrams with a crossing at top, which are killed by $\gls{eprime}$.  This shows that
$\gls{eprime}e_{C_\mu}D_1e_{B_s}$ is invariant.  Thus, we have a map of
$\epsilon_\mu \gls{mH}(\pq)\to \gls{eprime}e_{C_\mu}\waha_{\gls{Cset}}(\pq)e_{B_s}$ sending
$\epsilon_\mu \mapsto \gls{eprime}e_{C_\mu}D_ee_{B_s}$.  This map must be
surjective, since every $ \gls{eprime}e_{C_\mu}D_we_{B_s}$ is in its image, and
comparing ranks over the fraction field $K=\K(X_1,\dots, X_n)$, we see that it
must be injective as well.  Thus, the action of $\gls{eprime} \waha_{\gls{Cset}}(\pq) \gls{eprime}$
on $\gls{eprime} \waha_{\gls{Cset}}(\pq) e_{B_s}$ defines a map $\gls{eprime} \waha_{\gls{Cset}}(\pq) \gls{eprime}\to
\cS_h$.  

Assume $a\neq 0$ is in the kernel of this map 
$\gls{eprime} \waha_{\gls{Cset}}(\pq) \gls{eprime}$; that is, $a$ acts trivially on $\gls{eprime}
\waha_{\gls{Cset}}(\pq) e_{B_s}$.  Note that
$\waha_{\gls{Cset}}(\pq)$ acts faithfully on the rational representation
$P^K_{\gls{Cset}}=P_{\gls{Cset}}\otimes_{\K[X_1^{\pm 1},\dots,X_n^{\pm 1}]}K$, and the element $D_1$ induces an isomorphism $
e_{C_{\mu} }P_{\gls{Cset}}^K\otimes F\to e_{B_s }P_{\gls{Cset}}^K\otimes
F$.  Thus, we must have that $aD_1$ then acts non-trivially in $ e_{B_s }P_{\gls{Cset}}$, and so $aD_1e_{B_s }\neq 0$, contradicting our assumption that
$a$ is in the kernel.
Thus, we can only have $a=0$, and the map to the Schur algebra is injective.

On the other hand, note that the element $\gls{eprime}e_{C_\mu}D_we_{C_{\mu'}}e$
for $w$ any shortest double coset representative is sent to the element
$\phi_w=\sum_{w'\in S_{\mu}w S_{\mu'}}T_{w'}$ plus elements in
$\K[[h]][X^{\pm 1}_1,\dots, X^{\pm 1}_n]\phi_v$ for $v$ shorter in Bruhat
order.  Since $\phi_w X^{a_1}_1,\dots, X^{a_n}_n$ give a basis of
$\cS_h$ by \cite[2.2.2]{GreenSchur}, the fact that these are in the image shows that this map is
surjective.

When $\ck >0$, the argument is quite similar, but with $T_i-q$
replacing $T_i+1$ and using the $\ck>0$ version of Theorem \ref{wdHecke}.
\end{proof}
We'll prove in Corollary \ref{affine-schur-morita} that the idempotent
$\gls{eprime}$ induces a Morita equivalence between $\waha_{\gls{Cset}}$ and $\gls{cS}_h(n,m)$.
Thus, from the perspective of the Hecke side, introducing the type W
relations is an alternate way of understanding the affine Schur
algebra.

\subsection{Weighted KLR algebras}
\label{sec:weight-klr-algebr}

On the other hand, the author has incorporated similar ideas
into the theory of KLR algebras, by introducing {\bf weighted KLR
  algebras} \cite{WebwKLR}.
\begin{definition}\label{def:wKLR}
  Let $\gls{W}(\pq)$ be the rank $n$ weighted KLR algebra attached to the graph $\gls{U}$; that
  is, $\gls{W}(\pq)$ is the quotient of $\K[h]$-span of weighted KLR
  diagrams with $n$ strands
  (as defined in \cite[Def. 2.3]{WebwKLR}) by the local relations
  (note that these relations are drawn with $\ck<0$): \newseq
  \begin{equation*}\subeqn\label{dots-1}
    \begin{tikzpicture}[scale=.45,baseline]
      \draw[very thick](-4,0) +(-1,-1) -- +(1,1) node[below,at start]
      {$u$}; \draw[very thick](-4,0) +(1,-1) -- +(-1,1) node[below,at
      start] {$v$}; \fill (-4.5,.5) circle (5pt);
      \node at (-2,0){=}; \draw[very thick](0,0) +(-1,-1) -- +(1,1)
      node[below,at start] {$u$}; \draw[very thick](0,0) +(1,-1) --
      +(-1,1) node[below,at start] {$v$}; \fill (.5,-.5) circle (5pt);
      \node at (4,0){for $i\neq j$};
    \end{tikzpicture}\end{equation*}
  \begin{equation*}\label{dots-2}\subeqn
    \begin{tikzpicture}[scale=.45,baseline]
      \draw[very thick](-4,0) +(-1,-1) -- +(1,1) node[below,at start]
      {$u$}; \draw[very thick](-4,0) +(1,-1) -- +(-1,1) node[below,at
      start] {$u$}; \fill (-4.5,.5) circle (5pt);
      \node at (-2,0){=}; \draw[very thick](0,0) +(-1,-1) -- +(1,1)
      node[below,at start] {$u$}; \draw[very thick](0,0) +(1,-1) --
      +(-1,1) node[below,at start] {$u$}; \fill (.5,-.5) circle (5pt);
      \node at (2,0){+}; \draw[very thick](4,0) +(-1,-1) -- +(-1,1)
      node[below,at start] {$u$}; \draw[very thick](4,0) +(0,-1) --
      +(0,1) node[below,at start] {$u$};
    \end{tikzpicture}\qquad
    \begin{tikzpicture}[scale=.45,baseline]
      \draw[very thick](-4,0) +(-1,-1) -- +(1,1) node[below,at start]
      {$u$}; \draw[very thick](-4,0) +(1,-1) -- +(-1,1) node[below,at
      start] {$u$}; \fill (-4.5,-.5) circle (5pt);
      \node at (-2,0){=}; \draw[very thick](0,0) +(-1,-1) -- +(1,1)
      node[below,at start] {$u$}; \draw[very thick](0,0) +(1,-1) --
      +(-1,1) node[below,at start] {$u$}; \fill (.5,.5) circle (5pt);
      \node at (2,0){+}; \draw[very thick](4,0) +(-1,-1) -- +(-1,1)
      node[below,at start] {$u$}; \draw[very thick](4,0) +(0,-1) --
      +(0,1) node[below,at start] {$u$};
    \end{tikzpicture}
  \end{equation*}
  \begin{equation*}\label{strand-bigon}\subeqn
    \begin{tikzpicture}[very thick,scale=.8,baseline]
      \draw (-2.8,0) +(0,-1) .. controls (-1.2,0) ..  +(0,1)
      node[below,at start]{$u$}; \draw (-1.2,0) +(0,-1) .. controls
      (-2.8,0) ..  +(0,1) node[below,at start]{$u$}; \node at (-.5,0)
      {=}; \node at (0.4,0) {$0$}; \node at (1.5,.05) {and};
    \end{tikzpicture}
    \hspace{.4cm}
    \begin{tikzpicture}[very thick,scale=.8 ,baseline]

      \draw (-2.8,0) +(0,-1) .. controls (-1.2,0) ..  +(0,1)
      node[below,at start]{$u$}; \draw (-1.2,0) +(0,-1) .. controls
      (-2.8,0) ..  +(0,1) node[below,at start]{$v$}; \node at (-.5,0)
      {=};

      \draw (1.8,0) +(0,-1) -- +(0,1) node[below,at start]{$v$}; \draw
      (1,0) +(0,-1) -- +(0,1) node[below,at start]{$u$};\node at (3.3,0){for $u\neq v$};
    \end{tikzpicture}
  \end{equation*} 
  \begin{equation*}\label{ghost-bigon1}\subeqn
    \begin{tikzpicture}[very thick,xscale=1.25 ,yscale=.8,baseline]
      \draw (1,-1) to[in=-90,out=90] node[below, at start]{$u$}
      (1.5,0) to[in=-90,out=90] (1,1) ; \draw[dashed] (1.5,-1)
      to[in=-90,out=90] (1,0) to[in=-90,out=90] (1.5,1); \draw
      (2.5,-1) to[in=-90,out=90] node[below, at start]{$v$} (2,0)
      to[in=-90,out=90] (2.5,1); 
\node at (3,0) {=}; 
 \end{tikzpicture}
 \begin{cases}
      \begin{tikzpicture}[very thick,xscale=1.25 ,yscale=.8,baseline]
\draw (3.7,-1) --
      (3.7,1) node[below, at start]{$u$} ; \draw[dashed] (4.2,-1) to
      (4.2,1); \draw (5.2,-1) -- (5.2,1) node[below, at start]{$v$};
    \end{tikzpicture} & \text{for $u\neq qv$}\\
\begin{tikzpicture}[very thick,xscale=1.25,yscale=.8,baseline]
  \draw (3.7,-1) --
      (3.7,1) node[below, at start]{$u$} ; \draw[dashed] (4.2,-1) to
      (4.2,1); \draw (5.2,-1) -- (5.2,1) node[below, at start]{$v$}
      node[midway,fill,inner sep=2.5pt,circle]{}; \node at (5.75,0)
      {$-$};

      \draw (6.2,-1) -- (6.2,1) node[below, at start]{$u$}
      node[midway,fill,inner sep=2.5pt,circle]{}; \draw[dashed]
      (6.7,-1)-- (6.7,1); \draw (7.7,-1) -- (7.7,1) node[below, at
      start]{$v$};
\node at (8.25,0)
      {$+h$};
  \draw (8.7,-1) -- (8.7,1) node[below, at start]{$u$}; \draw[dashed]
      (9.2,-1)-- (9.2,1); \draw (10.2,-1) -- (10.2,1) node[below, at
      start]{$v$};
    \end{tikzpicture} & \text{for $u= qv$}
 \end{cases}
  \end{equation*}  
 \begin{equation*}\label{ghost-bigon1a}\subeqn
    \begin{tikzpicture}[very thick,xscale=1.25 ,yscale=.8,baseline]
      \draw (1.5,-1) to[in=-90,out=90] node[below, at start]{$u$}
      (1,0) to[in=-90,out=90] (1.5,1) ; \draw[dashed] (1,-1)
      to[in=-90,out=90] (1.5,0) to[in=-90,out=90] (1,1); \draw
      (2,-1) to[in=-90,out=90] node[below, at start]{$v$} (2.5,0)
      to[in=-90,out=90] (2,1); 
\node at (3,0) {=}; 
 \end{tikzpicture}
 \begin{cases}
      \begin{tikzpicture}[very thick,xscale=1.25 ,yscale=.8,baseline]
\draw (4.7,-1) --
      (4.7,1) node[below, at start]{$u$} ; \draw[dashed] (4.2,-1) to
      (4.2,1); \draw (5.2,-1) -- (5.2,1) node[below, at start]{$v$};
    \end{tikzpicture} & \text{for $u\neq qv$}\\
\begin{tikzpicture}[very thick,xscale=1.25,yscale=.8,baseline]
  \draw (4.7,-1) --
      (4.7,1) node[below, at start]{$u$} ; \draw[dashed] (4.2,-1) to
      (4.2,1); \draw (5.2,-1) -- (5.2,1) node[below, at start]{$v$}
      node[midway,fill,inner sep=2.5pt,circle]{}; \node at (5.75,0)
      {$-$};

      \draw (6.7,-1) -- (6.7,1) node[below, at start]{$u$}
      node[midway,fill,inner sep=2.5pt,circle]{}; \draw[dashed]
      (6.2,-1)-- (6.2,1); \draw (7.2,-1) -- (7.2,1) node[below, at
      start]{$v$};
\node at (7.75,0)
      {$+h$};
  \draw (8.7,-1) -- (8.7,1) node[below, at start]{$u$}; \draw[dashed]
      (8.2,-1)-- (8.2,1); \draw (9.2,-1) -- (9.2,1) node[below, at
      start]{$v$};
    \end{tikzpicture} & \text{for $u= qv$}
 \end{cases}
  \end{equation*}  
  \begin{equation*}\subeqn\label{triple-boring}
    \begin{tikzpicture}[very thick,scale=1 ,scale=.8,baseline]
      \draw (-3,0) +(1,-1) -- +(-1,1) node[below,at start]{$w$}; \draw
      (-3,0) +(-1,-1) -- +(1,1) node[below,at start]{$u$}; \draw
      (-3,0) +(0,-1) .. controls (-4,0) ..  +(0,1) node[below,at
      start]{$v$}; \node at (-1,0) {=}; \draw (1,0) +(1,-1) -- +(-1,1)
      node[below,at start]{$w$}; \draw (1,0) +(-1,-1) -- +(1,1)
      node[below,at start]{$u$}; \draw (1,0) +(0,-1) .. controls
      (2,0) ..  +(0,1) node[below,at start]{$v$};
    \end{tikzpicture}
  \end{equation*}
  \begin{equation*}\subeqn \label{eq:triple-point1}
    \begin{tikzpicture}[very thick,xscale=1.1,yscale=.8,baseline]
      \draw[dashed] (-3,0) +(.4,-1) -- +(-.4,1); \draw[dashed] (-3,0)
      +(-.4,-1) -- +(.4,1); \draw (-1.5,0) +(.4,-1) -- +(-.4,1)
      node[below,at start]{$v$}; \draw (-1.5,0) +(-.4,-1) -- +(.4,1)
      node[below,at start]{$w$}; \draw (-3,0) +(0,-1) .. controls
      (-3.5,0) ..  +(0,1) node[below,at start]{$u$};\node at (-.75,0)
      {$-$}; \draw[dashed] (0,0) +(.4,-1) -- +(-.4,1); \draw[dashed]
      (0,0) +(-.4,-1) -- +(.4,1); \draw (1.5,0) +(.4,-1) -- +(-.4,1)
      node[below,at start]{$v$}; \draw (1.5,0) +(-.4,-1) -- +(.4,1)
      node[below,at start]{$w$}; \draw (0,0) +(0,-1) .. controls
      (.5,0) ..  +(0,1) node[below,at start]{$u$}; \node at (2.25,0)
      {$=$}; 
    \end{tikzpicture}
    \begin{cases}
      \begin{tikzpicture}[very thick,xscale=1.1,yscale=.8,baseline]
        \draw (4.5,0) +(.4,-1) -- +(.4,1) node[below,at
      start]{$v$}; \draw (4.5,0) +(-.4,-1) -- +(-.4,1) node[below,at
      start]{$w$}; \draw[dashed] (3,0) +(.4,-1) -- +(.4,1);
      \draw[dashed] (3,0) +(-.4,-1) -- +(-.4,1); \draw (3,0) +(0,-1)
      -- +(0,1) node[below,at start]{$u$};
      \end{tikzpicture}& \text{if $v=w=qu$}\\
      0 &  \text{unless $v=w=qu$}
    \end{cases}
  \end{equation*}
  \begin{equation*}\subeqn\label{eq:KLRtriple-point2}
    \begin{tikzpicture}[very thick,xscale=1.1,yscale=.8,baseline]
      \draw[dashed] (-3,0) +(0,-1) .. controls (-3.5,0) ..  +(0,1) ;
      \draw (-3,0) +(.4,-1) -- +(-.4,1) node[below,at start]{$v$};
      \draw (-3,0) +(-.4,-1) -- +(.4,1) node[below,at start]{$u$};
      \draw (-1.5,0) +(0,-1) .. controls (-2,0) ..  +(0,1)
      node[below,at start]{$w$};\node at (-.75,0) {$-$}; \draw (0,0)
      +(.4,-1) -- +(-.4,1) node[below,at start]{$v$}; \draw (0,0)
      +(-.4,-1) -- +(.4,1) node[below,at start]{$u$}; \draw[dashed]
      (0,0) +(0,-1) .. controls (.5,0) ..  +(0,1); \draw (1.5,0)
      +(0,-1) .. controls (2,0) ..  +(0,1) node[below,at
      start]{$w$}; \node at (2.25,0) {$=$};     \end{tikzpicture}
    \begin{cases}
      \begin{tikzpicture}[very thick,xscale=1.1,yscale=.8,baseline]\draw (3,0) +(.4,-1) --
      +(.4,1) node[below,at start]{$v$}; \draw (3,0) +(-.4,-1) --
      +(-.4,1) node[below,at start]{$u$}; \draw[dashed] (3,0) +(0,-1)
      -- +(0,1);\draw (4.5,0) +(0,-1) -- +(0,1) node[below,at
      start]{$w$};
      \end{tikzpicture}& \text{if $w=qu=qv$}\\
      0 &  \text{unless $w=qu=qv$}
    \end{cases}.
  \end{equation*}
\end{definition}
For the sake of completeness, here is an example of a weighted KLR
diagram:
\[
\begin{tikzpicture}[baseline,very thick,xscale=2.5,yscale=1.4]
  \draw (-.5,-1) to[out=90,in=-90] node[below,at start]{$u_2$}  (-1,0) to[out=90,in=-90](.5,1);
  \draw[dashed] (-1.25,-1) to[out=90,in=-90] (-1.75,0) to[out=90,in=-90](-.25,1);
  \draw (.5,-1) to[out=90,in=-90]node[below,at start]{$u_4$} (1,1);
    \draw[dashed] (-.25,-1) to[out=90,in=-90] (.25,1);
  \draw  (1,-1) to[out=90,in=-90] node[below,at start]{$u_5$}
  node[circle, midway,fill=black,inner sep=3pt]{} (0,1);
    \draw[dashed]  (.25,-1) to[out=90,in=-90] (-.75,1);
  \draw (-1, -1) to[out=90,in=-90] node[below,at start]{$u_1$} (-.5,0) to[out=90,in=-90] (-1,1);
  \draw (0,-1) to[out=90,in=-90] node[below,at start]{$u_3$} (-.5,1);
  \draw[dashed] (-1.75, -1) to[out=90,in=-90] (-1.25,0) to[out=90,in=-90] (-1.75,1);
  \draw[dashed] (-.75,-1) to[out=90,in=-90] (-1.25,1);
\end{tikzpicture}\]
We can define a {\bf degree} function on KL diagrams, a special case
of the degree function in \cite{WebwKLR}.  The degrees are
given on elementary diagrams by 
\begin{equation}
  \deg\tikz[baseline,very thick,scale=1.5]{\draw (.2,.3) -- (-.2,-.1)
    node[at end,below,
    scale=.8]{$u$}; \draw (.2,-.1) -- (-.2,.3) node[at
    start,below,scale=.8]{$v$};}
  =-2 \qquad
  \deg\tikz[baseline,very thick,scale=1.5]{\draw (0,.3) -- (0,-.1)
    node[at
    end,below,scale=.8]{$u$} node[midway,circle,fill=black,inner
    sep=2pt]{};}=2
\qquad   \deg\tikz[baseline,very thick,scale=1.5]{\draw (.2,.3) -- (-.2,-.1)
    node[at
    end,below,scale=.8]{$u$}; \draw[dashed] (.2,-.1) -- (-.2,.3)
    node[at
    start,below,scale=.8]{$v$};} = 
\deg\tikz[baseline,very
  thick,scale=1.5]{\draw[dashed] (.2,.3) -- (-.2,-.1) node[at
    end,below,scale=.8]{$u$}; \draw (.2,-.1) -- (-.2,.3) node[at
    start,below,scale=.8]{$v$};}
  =
\begin{cases}
    2 & u=qv=q^{-1}v\\
    1 & u=q^{\pm 1}v\neq q^{\mp 1}v\\
    0 & u\neq q^{\pm 1}v
  \end{cases}\label{wKLR-grading}
\end{equation}
and $h$ is given grading $2$. Note that the relations
(\ref{dots-1}--\ref{eq:KLRtriple-point2}) are all homogeneous with wKLR diagrams given the
grading of \eqref{wKLR-grading}.
\begin{proposition}[\mbox{\cite[Prop. 2.7]{WebwKLR}}]\label{W-KLR-poly}
  The wKLR algebra $\gls{W}_{\mathscr{D}}(\pq)$ for a collection
  $\mathscr{D}$ has a faithful polynomial representation
  \[P_{\mathscr{D}} := \oplus_{D\in \mathscr{D}}\K[h,y_1,\dots,
  y_{|D|}]\]
defined by the rule that 
\begin{itemize}
\item Each crossing of the $r$ and $r+1$st strands acts by the Demazure operator \[\partial_r(f)=
  \frac{f^{s_r}-f}{y_{r+1}-y_{r}}.\]
\item 
A crossing between the $r$th strand and a ghost of $s$th strand acts
by
\begin{itemize}
\item the identity if $\ck <0$ and the
  strand is NE/SW or $\ck >0$ and the strand is NW/SE,
\item the multiplication operator of  $y_s-y_r+h$ if $\ck <0$ and
  the strand is NW/SE or $\ck >0$ and the strand is NE/SW
\end{itemize}
\item A square on the $r$th strand acts by the multiplication operator
  $Y_r$.  
\end{itemize}
\end{proposition}

Thus, we can again apply the results of Section \ref{sec:polyn-style-repr},
with 
\begin{equation}
\mathbb{K}=\K[h]\qquad A=\gls{W}_{\mathscr{D}}(\pq) \qquad
  B=\K[h,y]\qquad I=B(h,y)\qquad P=P_{\mathscr{B}}.\label{eq:wKLR-PR}
\end{equation}

\begin{lemma}
  The polynomial representation $P_{\mathscr{D}}$ is graded polynomial-style
  with the data of (\ref{eq:wKLR-PR}).  
\end{lemma}
\begin{proof}
  The algebra $\gls{W}_{\mathscr{D}}(\pq) $ is free over $B^{\otimes
    n}=\K[h,y_1,\dots, y_n]$ by \cite[Thm. 2.8]{WebwKLR}, the
  centrality of $Z$ is clear from the relations and the faithfulness
  of $P$ follows from Proposition \ref{W-KLR-poly}.  The compatibility
  with grading is also clear from the definition
  (\ref{wKLR-grading}).  
\end{proof}

\begin{definition}\label{def:hW}
We let $\gls{hW}(h)$ be the completion of the weighted KLR algebra
$\gls{W}$ for $\gls{U}$
with respect to the grading; since $h$ has degree 2, this completion
is naturally a complete $\K[[h]]$-module.  For any collection $\mathscr{D}$, we let
$\gls{W}_{\mathscr{D}}(\pq),\gls{hW}_{\mathscr{D}}(\pq)$ be the sum of images of
the idempotents corresponding to loadings on a set of points in $\mathscr{D}$.
\end{definition}  

Let $\Bi$ be a loading in
the sense of \cite{WebwKLR}, that is, a finite subset $D=\{d_1,\dots, d_n\}$ with
$d_1<\cdots <d_n$ of $\R$
together with a map $\Bi\colon D\to \gls{U}$.
In the algebra $\widehat{\waha}_{\mathscr{D}}(\pq)$, we have an idempotent
$\epsilon_{\Bi}$ projecting to the stable kernel of $X_j-\Bi(d_j)$
(that is, the kernel of a sufficiently large power). 
We represent $\epsilon_{\Bi}$ as a type W
diagram, with the strands labeled by the elements
$u_i=\Bi(d_j)$.  

\begin{theorem}\label{W-isomorphism}
  There is an isomorphism $\gamma\colon
  \widehat{\waha}_{\mathscr{D}}(\pq)\to \gls{hW}_{\mathscr{D}}(\pq)$ such
  that $\gamma(X_r)=\sum_{\Bu}u_rb({y_r})e_{\Bu}$,
\newseq
\[\subeqn\label{crossing-match}
\tikz[baseline,very thick,scale=1.5, green!50!black]{\draw (.2,.3) --
  (-.2,-.1); \draw
  (.2,-.1) -- (-.2,.3);} \epsilon_{\Bu}\mapsto
\begin{cases}
\displaystyle\frac{1}{u_{r+1}b(y_{r+1})-u_rb(y_{r})}(\psi_r-1) e_{\Bu} & u_r\neq u_{r+1}\\
 \displaystyle \frac{y_{r+1}-y_r}{u_{r+1}(b(y_{r+1})-b(y_{r}))}\psi_r e_{\Bu}& u_r=u_{r+1}
\end{cases}
\]
\[\subeqn\label{ghost-match}
\tikz[baseline,very thick,scale=1.5, green!50!black]{\draw[densely dashed] 
  (-.2,-.1)-- (.2,.3); \draw
  (.2,-.1) -- (-.2,.3);}\epsilon_{\Bu} \mapsto
\begin{cases}
\displaystyle u_rb(y_r)-\pq u_sb(y_s)\tikz[baseline,very thick,scale=1.5]{\draw[densely dashed]
    (-.2,-.1)-- (.2,.3); \draw
    (.2,-.1) -- (-.2,.3);} e_{\Bu}&
  u_r\neq qu_s\\ 
 \displaystyle \frac {u_rb(y_r)-\pq u_sb(y_s)}{y_{s}-y_{r}+d_1h}\tikz[baseline,very thick,scale=1.5]{\draw[densely dashed]
    (-.2,-.1)-- (.2,.3); \draw
    (.2,-.1) -- (-.2,.3);}e_{\Bu}& u_r=qu_s
\end{cases}
\qquad \qquad\tikz[baseline,very thick,scale=1.5, green!50!black]{\draw (.2,.3) --
  (-.2,-.1); \draw [densely dashed]
  (.2,-.1) -- (-.2,.3);} \mapsto \tikz[baseline,very thick,scale=1.5]{\draw (.2,.3) --
  (-.2,-.1); \draw [densely dashed]
  (.2,-.1) -- (-.2,.3);} \]
\end{theorem}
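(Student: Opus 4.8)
The plan is to reprise the strategy of Proposition~\ref{O-isomorphism}: rather than checking that $\gamma$ preserves the defining relations, I would realize $\gamma$ as the conjugation of one faithful polynomial representation into the other and then verify that each generator of $\widehat{\waha}_{\mathscr{D}}$ is carried to the asserted element of $\widehat{W}_{\mathscr{D}}$ as an operator. Here $\widehat{\waha}_{\mathscr{D}}$ acts faithfully on the completion of the representation $P_{\mathscr{D}}$ of Proposition~\ref{W-poly} (faithfulness coming from the monomial basis constructed above), and $\widehat{W}_{\mathscr{D}}$ acts faithfully on its polynomial representation of Proposition~\ref{W-KLR-poly}, by \cite{WebwKLR}. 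On the summand attached to a loading with labels $\Bu$, both completions coincide with the power series ring $\K[[h,y_1,\dots,y_n]]$ once one notes that, since $u_j\neq 0$ and $b(0)=1$, the substitution $Y_j=u_jb(y_j)$ makes $Y_j^{\pm 1}$ units; I would take this substitution to be the vector-space isomorphism $\gamma_p$, exactly as in type O. As a square acts by multiplication by $X_r$ and hence, after substitution, by $u_rb(y_r)$, the formula $\gamma(X_r)=\sum_{\Bu}u_rb(y_r)e_{\Bu}$ is immediate.

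Next I would transport the honest crossing, which acts on the Hecke side by the Demazure operator $\partial_r(F)=(F^{s_r}-F)/(Y_{r+1}-Y_r)$, with denominator $u_{r+1}b(y_{r+1})-u_rb(y_r)$ after substitution. When $u_r=u_{r+1}$ both terms remain in the sector $e_{\Bu}$, and comparison with the nilHecke action of $\psi_r$ produces the scalar $(y_{r+1}-y_r)/\bigl(u_{r+1}(b(y_{r+1})-b(y_r))\bigr)$; this is a unit because $b\in 1+y+\cdots$ forces $b(y_{r+1})-b(y_r)$ to be $(y_{r+1}-y_r)$ times an invertible power series. When $u_r\neq u_{r+1}$ the term $F^{s_r}$ lands in the sector $e_{\Bu^{s_r}}$ while $F$ stays in $e_{\Bu}$, so $\partial_r$ is $(Y_{r+1}-Y_r)^{-1}$ times the bare transposition minus the identity; since in the weighted algebra $\psi_r$ acts on unequally labelled strands by the bare transposition (relation (\ref{strand-bigon})), this is precisely $(u_{r+1}b(y_{r+1})-u_rb(y_r))^{-1}(\psi_r-1)e_{\Bu}$, and the scalar is a unit as its constant term $u_{r+1}-u_r$ is nonzero. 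Together these give (\ref{crossing-match}). Unlike in type O, no separate $u_r=qu_{r+1}$ case arises for the crossing: in the weighted picture the honest crossing is insensitive to the $q$-interaction, which is carried entirely by the ghosts.

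For the ghosts I would use that, with $\ck<0$, the NE/SW crossing acts by the identity on both sides, giving the last correspondence, while the NW/SE crossing acts on the Hecke side by multiplication by $Y_r-\pq Y_s$, that is by $u_rb(y_r)-\pq u_sb(y_s)$. On the weighted KLR side the same crossing acts by the identity when $u_r\neq qu_s$, and by multiplication by the factor dictated by the ghost-bigon relation (\ref{ghost-bigon1})---namely $y_s-y_r$ when $d(h)=1$ and $y_s-y_r+h$ when $d(h)=e^h$---when $u_r=qu_s$. Matching the operators yields the three cases of (\ref{ghost-match}); the substance is again a divisibility. For $u_r=qu_s$ the element $u_rb(y_r)-\pq u_sb(y_s)$ equals $qu_s\bigl(b(y_r)-b(y_s)\bigr)$ when $d(h)=1$, hence $(y_s-y_r)$ times a unit, and equals $qu_se^{y_s}\bigl(e^{y_r-y_s}-e^{h}\bigr)$ when $d(h)=b(h)=e^h$, hence $(y_s-y_r+h)$ times a unit; in each case the stated quotient is an invertible power series.

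Since every coefficient appearing in (\ref{crossing-match})--(\ref{ghost-match}) is a unit in the relevant power series ring, the images of the generators of the two algebras generate the same subalgebra of operators on the completed polynomial representation, and faithfulness identifies this common subalgebra with each completed algebra in turn, producing $\gamma$; the $\ck>0$ case then follows by the mirror-image conventions and the signed representation, as in Theorem~\ref{wdHecke}. The main obstacle I anticipate is the bookkeeping in the middle steps: because both the honest crossing and its asserted image move the sector from $e_{\Bu}$ to $e_{\Bu^{s_r}}$, one must track carefully which $y$-coordinates and which labels are in play in each sector when dividing by $Y_{r+1}-Y_r$, and must extract each invertible rescaling from the divisibility of $b(y_{r+1})-b(y_r)$ and of $u_rb(y_r)-\pq u_sb(y_s)$ while holding the sign of $\ck$ and the choice of $d(h)$ fixed throughout.
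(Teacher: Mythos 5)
Your proposal is correct and follows essentially the same route as the paper's proof: both transport the completed polynomial representation of $\widehat{\waha}_{\mathscr{D}}$ through the substitution $\gamma_p\colon Y_j\mapsto u_jb(y_j)$, match each generator (squares, solid crossings, ghost crossings) against the corresponding wKLR operator from Proposition~\ref{W-KLR-poly}, and use the invertibility of the resulting coefficients together with faithfulness to conclude. The paper's own argument is just a terser version of yours, with the divisibility facts for $b(y_{r+1})-b(y_r)$ and $u_rb(y_r)-\pq u_sb(y_s)$ (and the sector bookkeeping you flag) left implicit.
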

\begin{proof}
  This follows from comparing the polynomial representations.  Exactly
  as argued in Lemma \ref{lem:gammap}, the map is an isomorphism of
  vector spaces between the polynomial representations: the polynomial
  representation ${\cP}_{\mathscr{B}}$ has one copy of $\mathcal{C}$
  for each subset in $\mathscr{B}$.  In $\widehat{\cP}_{\mathscr{B}}$,
  each of these copies is completed at $\gls{U}^n$, and becomes the direct
  sum of the images of $e_{\Bu}$, which is a copy of the completed
  polynomial ring.  We can think of the choice of subset and of $\Bu$
  as giving a loading, which has a corresponding copy of $\widehat{C}$
  in $P_{\mathscr{B}}$.  The map $\gamma_p$ induces an isomorphism
  between these completed polynomial rings.

 Now,  we should
 consider how identifying completed polynomial representations via
 $\gamma_p$ affects how the basic diagrams of the WAHA act on the polynomial
 representation.  

 We have that \[\tikz[baseline,very thick,scale=1.5, green!50!black]{\draw (.2,.3) --
  (-.2,-.1); \draw
  (.2,-.1) -- (-.2,.3);} \cdot
f\epsilon_{\Bu}=\frac{f^{s_r}-f}{y_{r+1}-y_r}\epsilon_{\Bu}\]

If $u_r\neq u_{r+1}$, then $\psi_r\cdot
f\epsilon_\Bu=f^{s_r}\epsilon_\Bu$ and $u_{r+1}b(y_{r+1})-u_rb(y_r)$
is invertible, so the appropriate case of (\ref{crossing-match}) holds.  If $u_r= u_{r+1}$, then
$\frac{u_{r}(b(y_{r+1})-b(y_r))}{y_{r+1}-y_r}$  is invertible, so the
formula is clear.  

Now, we turn to (\ref{ghost-match}).  We find that $\tikz[baseline,very thick,scale=1.5, green!50!black]{\draw[densely dashed] 
  (-.2,-.1)-- (.2,.3); \draw
  (.2,-.1) -- (-.2,.3);}\cdot f\epsilon_{\Bu}=(u_rb(y_r)-\pq u_sb(y_s) )f\epsilon_{\Bu}$. The first case of the isomorphism (\ref{ghost-match}) thus
follows directly from the polynomial representation of the wKLR algebra
given in Proposition \ref{W-KLR-poly}. The second case of
(\ref{ghost-match}) is clear.
\end{proof}

  The reader will note that the image of the idempotent $\gls{eprime}$ under
  this isomorphism is not homogeneous.  On abstract grounds, there
  must exist a homogeneous idempotent $e''$ with isomorphic image.  Let
  us give a description of one such, which is philosophically quite
  close to the approach of \cite{SWschur}.  

Choose an arbitrary order
  on the elements of $\gls{U}$.  The idempotent $e_\mu'$ for a composition
  $\mu$ is replaced by the sum of contributions from a list of
  multi-subsets $Z_i$ of $\gls{U}$ such that $|Z_i|=\mu_i$.  There's a
  loading corresponding to these subsets, which we'll denote
  $\Bi_{Z_*}$.  The underlying subset is $C_{\mu}$ as defined before;
  the points associated to the $j$th part at $x=js+\epsilon,\dots,
  js+\mu_j\epsilon$ are labeled with the elements of $Z_j$ in our
  fixed order.  Finally, $e''_{Z_*}$ is the idempotent on this loading
  that acts on each group of strands with the same label in $\gls{U}$ and attached
  to the same part of $\mu$ with a fixed homogeneous primitive
  idempotent in the nilHecke algebra, for example, that acts as
$y^{k-1}_1\cdots y_{k-1}\partial_{w_0}$ in the polynomial
representation. Consider the sum $e''$ of the idempotents $e''_{Z_*}$ over all
$p$-tuples of multi-subsets.  

The idempotent $e''$ has
isomorphic image to $\gls{eprime}$, since $W\gls{eprime}'$ is a sum of projectives for
each composition $\mu$ whose
$(\mu_1!\cdots\mu_p!)$-fold direct sum is $We_{C_\mu}$.
Thus, the algebra $e''We''$ is graded and isomorphic to the Schur algebra.
It would be interesting to make this isomorphism a bit more
explicit, but we will leave that to other work.  

\section{Type F} 

\subsection{Type F Hecke algebras}
\label{sec:type-f-hecke}

Now let us turn to our other complication, analogous to that which
appeared in 
\cite{Webmerged}:
\begin{definition}
  A rank $n$ {\bf type F$_1$ Hecke diagram} is a rank $n$ affine Hecke diagram
  with a vertical red line inserted at $x=0$. The diagram must avoid
  tangencies and triple points with this strand as well, and only
  allow isotopies that preserve these conditions.
\end{definition}
We give an example of such a diagram below: 
\[
\begin{tikzpicture}[baseline,very thick,green!50!black, xscale=2,yscale=1.3]
  \draw (-.5,-1) to[out=90,in=-90] (-1,0) to[out=90,in=-90](.5,1);
  \draw (.5,-1) to[out=90,in=-90] (1,1);
  \draw[wei] (.25,-1) --(.25,1);
  \draw  (1,-1) to[out=90,in=-90] 
  node[midway,fill=green!50!black,inner sep=3pt]{} (0,1);
  \draw (-1, -1) to[out=90,in=-90] (-.5,0) to[out=90,in=-90] (-1,1);
  \draw (0,-1) to[out=90,in=-90] (-.5,1);
\end{tikzpicture}\]

We decorate this red strand with a multisubset
$Q_\bullet=\{Q_1,\dots, Q_\ell\}\subset \gls{U}$ and let
$\PQ_i=Q_ie^{-z_i}$.  To distinguish from other uses of the letter, we
let $\mathsf{e}_k(\Bz)$ be the degree $k$ elementary symmetric
function in an alphabet $\Bz$.
\begin{definition}
  Let the {\bf type F$_1$ affine Hecke algebra} $\tilde{\EuScript{F}} (\pq,\PQ_{\bullet})$ be the algebra generated over
  $\K[[h,\Bz]]$ by type F$_1$ Hecke diagrams with $m$ strands modulo the
  local relations (\ref{qHecke-1}--\ref{qHecke-triple})  and  the
  local relations: \newseq
      \begin{equation*}\subeqn\label{qdumb}
      \begin{tikzpicture}[very thick,baseline=2.85cm,scale=.8]
        \draw[wei] (-3,3) +(1,-1) -- +(-1,1); \draw [green!50!black] (-3,3) +(0,-1)
        .. controls (-4,3) ..  +(0,1); \draw [green!50!black] (-3,3) +(-1,-1) --
        +(1,1); \node at (-1,3) {=}; \draw[wei] (1,3) +(1,-1) --
        +(-1,1); \draw [green!50!black] (1,3) +(0,-1) .. controls (2,3) ..  +(0,1);
        \draw [green!50!black] (1,3) +(-1,-1) -- +(1,1); \end{tikzpicture}
    \end{equation*}
    \begin{equation*}\subeqn\label{qred-dot}
      \begin{tikzpicture}[very thick,baseline,scale=.8]
        \draw [green!50!black](-3,0) +(-1,-1) -- +(1,1); \draw[wei](-3,0) +(1,-1) --
        +(-1,1); \node[ fill=green!50!black,inner
      sep=2.5pt] at (-3.5,-.5){}; \node at (-1,0) {=};
        \draw [green!50!black](1,0) +(-1,-1) -- +(1,1); \draw[wei](1,0) +(1,-1) --
        +(-1,1); \node[ fill=green!50!black,inner
      sep=2.5pt] at (1.5,.5){};
      \end{tikzpicture}
    \end{equation*}
  \begin{equation*}\label{qHcost}\subeqn
    \begin{tikzpicture}[very thick,baseline,scale=.7]
      \draw [wei] (-1.8,0) +(0,-1) -- +(0,1);
      \draw[green!50!black](-1.2,0) +(0,-1) .. controls (-2.8,0) ..
      +(0,1); \node at (-.3,0) {=}; \draw [green!50!black] (2.2,0)
      +(0,-1) -- node[midway, fill=green!50!black,inner
      sep=2.5pt,label=right:{$\ell$}]{}+(0,1); \draw[wei] (1.2,0)
      +(0,-1) -- +(0,1); \node at (4.3,0) {$+\,\mathsf{e}_1(-\PQ_\bullet)$};
      \draw[green!50!black] (6.8,0) +(0,-1) -- node[midway,
      fill=green!50!black,inner
      sep=2.5pt,label=right:{$\ell-1$}]{}+(0,1); \draw [wei] (5.8,0)
      +(0,-1) -- +(0,1); \node at (8.8,0) {$+$}; \node at (9.6,-.07)
      {$\cdots$}; \node at (11.35,0) {$+\mathsf{e}_{\ell}(-\PQ_\bullet)$};
      \draw[green!50!black] (13.8,0) +(0,-1) -- +(0,1); \draw [wei]
      (12.8,0) +(0,-1) -- +(0,1);
    \end{tikzpicture}
  \end{equation*}
  That is, on the RHS, we have the product $p_{\PQ}=(X_j-\PQ_1)\cdots
  (X_j-\PQ_\ell)$, where the green strand shown is the $j$th, and
  \begin{equation*}\label{qred-triple}\subeqn
    \begin{tikzpicture}[very thick,baseline=-2pt,scale=.7]
      \draw [wei] (0,-1) -- (0,1); \draw[green!50!black](.5,-1)
      to[out=90,in=-30] (-.5,1); \draw[green!50!black](-.5,-1)
      to[out=30,in=-90] (.5,1);
    \end{tikzpicture}- \begin{tikzpicture}[very
      thick,baseline=-2pt,scale=.7] \draw [wei] (0,-1) -- (0,1);
      \draw[green!50!black](.5,-1) to[out=150,in=-90] (-.5,1);
      \draw[green!50!black](-.5,-1) to[out=90,in=-150] (.5,1);
    \end{tikzpicture}
    =\sum_{i=1}^\ell\sum_{a+b=i-1}\mathsf{e}_{\ell-i}(-\PQ_\bullet)\cdot \Bigg(\begin{tikzpicture}[very thick,baseline=-2pt,scale=.7]
      \draw [wei]  (0,-1) -- (0,1);
      \draw[green!50!black](.5,-1) to[out=90,in=-90] node[midway,
      fill=green!50!black,inner sep=2.5pt,label=right:{$b$}]{} (.5,1);
      \draw[green!50!black](-.5,-1) to[out=90,in=-90] node[midway,
      fill=green!50!black,inner sep=2.5pt,label=left:{$a+1$}]{} (-.5,1);
    \end{tikzpicture}-\pq \begin{tikzpicture}[very
      thick,baseline=-2pt,scale=.7] \draw [wei] (0,-1) -- (0,1);
      \draw[green!50!black](.5,-1) to[out=90,in=-90] node[midway,
      fill=green!50!black,inner sep=2.5pt,label=right:{$b+1$}]{}
      (.5,1); \draw[green!50!black](-.5,-1) to[out=90,in=-90]
      node[midway, fill=green!50!black,inner
      sep=2.5pt,label=left:{$a$}]{} (-.5,1);
    \end{tikzpicture}\Bigg).
  \end{equation*}
  The RHS can alternately by written as $(X_i-\pq
  X_{i+1})\frac{p_{\PQ}(X_i)-p_\PQ(X_{i+1})}{X_i-X_{i+1}}$.
\end{definition}
\begin{remark}
  As in the earlier cases, there is a degenerate version of this
  algebra, where we use the local relations
  (\ref{deg-Hecke-1}--\ref{deg-Hecke-triple}), leave (\ref{qdumb}--\ref{qHcost})
  unchanged, and replace the RHS of (\ref{qred-triple}) with $(X_i-
  X_{i+1}+1)\frac{p_{\PQ}(X_i)-p_\PQ(X_{i+1})}{X_i-X_{i+1}}$.
\end{remark}
We'll continue to use our convention of letting $X_r$ denote the sum
of all straight-line diagrams with a square on the $r$th green strand from
the left (ignoring red strands).

Given ${\mathscr{D}}$ a collection of subsets of $\R$, we'll let $\tilde{\EuScript{F}}_{\mathscr{D}}
(\pq,\PQ_{\bullet}),{\EuScript{F}}_{\mathscr{D}} (\pq,\PQ_{\bullet})$
denote the subalgebras of $\tilde{\EuScript{F}}
(\pq,\PQ_{\bullet}),{\EuScript{F}} (\pq,\PQ_{\bullet})$ spanned by
diagrams whose tops and bottoms lie in the set ${\mathscr{D}}$.

  Let $e_i$ be an arbitrarily fixed idempotent in $\tilde{\EuScript{F}} (\pq,\PQ_{\bullet})$ given by
  $i$ strands left of the red strand and $m-i$ right of it; let
  $\mathscr{D}^\circ$ be the collection of the corresponding
  sets. Since any idempotent is isomorphic to one of these by a
  straight-line diagram, enlarging $\mathscr{D}^\circ$ will give a
  Morita equivalent algebra.  Let
  $\tilde{P}_n$ be the free $S[X_1^{\pm 1},\dots, X_n^{\pm 1}]$-module generated
  by elements $f_p$ for $p=0,\dots, m$.
\begin{proposition}\label{tilde-poly}
  The algebra $\tilde{\EuScript{F}}_{\mathscr{D}^\circ} (\pq,\PQ_{\bullet})$ has a polynomial
  representation that sends
  \begin{itemize}
\item    $e_i$ to the identity on the submodule generated by $f_i$.
\item $X_i$ to the multiplication operator and \[(T_i+1)\cdot
  F(X_1,\dots,X_n)f_p\mapsto (X_i-\pq X_{i+1})\frac{F^{s_i}-F}{X_{i+1}-X_i}f_p.\]
 \item the action of positive to negative crossing to the identity
   \[F(X_1,\dots,X_n)f_i\mapsto
   F(X_1,\dots,X_n)f_{i+1},\] and the opposite crossing to
 \[F(X_1,\dots,X_n)f_i\mapsto
   p_\PQ(X_i)F(X_1,\dots,X_n)f_{i-1}.\]
  \end{itemize}
\end{proposition}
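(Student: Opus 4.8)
The plan is to check that the operators in the statement respect every defining relation of $\tilde{\EuScript{F}}_{\mathscr{D}^\circ}(\pq,\PQ_\bullet)$ on $\tilde P_m$; since the algebra is presented by diagrams modulo (\ref{qHecke-1}--\ref{qHecke-triple}), (\ref{qHcost}) and (\ref{qred-triple}), verifying these relations (together with the evident planar isotopies) produces the desired homomorphism to $\End(\tilde P_m)$. First I would record well-definedness. The divided-difference and multiplication operators preserve each summand $S[X_1^{\pm 1},\dots,X_m^{\pm 1}]f_p$; the idempotent $e_i$ acts as the projection onto the $f_i$-summand, so the idempotent relations are clear; and the two red--green crossings change the index only as $f_p\mapsto f_{p\pm 1}$, which stays within $0\le p\le m$ because a green strand can cross the red line in a given direction only when it actually sits on the corresponding side of it. The isotopy relations are immediate: a square acts by multiplication by the variable $X_i$ attached to its strand, and a red--green crossing involves neither the variables of the other strands nor a far-away green--green crossing, so these operators commute as the pictures require.

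I would next dispose of the relations that do not interact with the red strand. For each fixed $p$ the green crossings and squares act on $S[X^{\pm}]f_p$ by precisely the formulas of the signed polynomial representation $\cP^-$ of the affine Hecke algebra $\mH$ from the Type~O discussion, with $p$ carried along as an inert label; since those formulas define an action of $\mH$ (Theorem~\ref{type-O-Hecke}), the relations (\ref{qHecke-1}--\ref{qHecke-triple}) hold automatically. For (\ref{qHcost}) I would compose the two red--green crossings forming the bigon on the left: acting on $Ff_j$, the strand first passes to the left of the red line (the identity on the polynomial factor, $f_j\mapsto f_{j+1}$) and then back to the right ($f_{j+1}\mapsto f_j$, with multiplication by $p_{\PQ}(X_j)$), so that the composite is multiplication by $p_{\PQ}(X_j)=(X_j-\PQ_1)\cdots(X_j-\PQ_\ell)$, which is exactly the right-hand side.

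The real work is the red triple-point relation (\ref{qred-triple}). Its two left-hand diagrams differ only in whether the green--green crossing of strands $i$ and $i+1$ sits to the left or to the right of the red line, so they encode the two opposite orders in which one pushes these strands across the red strand. Writing $\partial_i(F)=\frac{F^{s_i}-F}{X_{i+1}-X_i}$, each term is a composite of the green crossing (whose polynomial action is built from $(X_i-\pq X_{i+1})\partial_i$, the $(T_i+1)$-formula recorded above) with the two red--green crossings, one of which is the identity and the other multiplication by $p_{\PQ}$ of the rightward-crossing strand. I would expand both composites and subtract: the terms proportional to $\partial_i(F)$ cancel, and the twisted Leibniz rule $\partial_i\big(p_{\PQ}(X_i)F\big)=p_{\PQ}(X_{i+1})\partial_i(F)+\partial_i\big(p_{\PQ}(X_i)\big)F$ reduces the remainder to multiplication by $(X_i-\pq X_{i+1})\frac{p_{\PQ}(X_i)-p_{\PQ}(X_{i+1})}{X_i-X_{i+1}}$. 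Expanding $p_{\PQ}(X)=\sum_k\mathsf{e}_{\ell-k}(-\PQ_\bullet)X^k$ and $\frac{X_i^k-X_{i+1}^k}{X_i-X_{i+1}}=\sum_{a+b=k-1}X_i^aX_{i+1}^b$ then matches this against the dotted sum on the right-hand side of (\ref{qred-triple}). The main obstacle is exactly this cancellation: one must track carefully which variable each red--green crossing contributes and fix the scalar normalization of the green crossing so that the difference of the two braidings is a pure multiplication operator with no residual divided-difference or constant term. Once (\ref{qred-triple}) is confirmed, all the defining relations hold and the assignment extends to the polynomial representation claimed.
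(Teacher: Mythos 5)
Your overall strategy coincides with the paper's (its entire proof is that this is ``a standard computation with Demazure operators''), and your treatment of well-definedness, the isotopy relations, and the bigon relation (\ref{qHcost}) is correct. However, the difficulty you flag at the end of your last paragraph is a genuine gap rather than a detail: the two halves of your argument use incompatible normalizations for the green--green crossing, and no single choice makes both halves work simultaneously.

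Concretely: relation (\ref{qHecke-2}) forces the crossing to square to $-(1+\pq)$ times itself, so if the crossings are to satisfy (\ref{qHecke-1}--\ref{qHecke-triple}) while acting through the signed representation $\cP^-$, the crossing must act by the $(T_i-\pq)$-operator $(X_i-\pq X_{i+1})\partial_i-(1+\pq)$, not by the pure twisted Demazure operator $(X_i-\pq X_{i+1})\partial_i$. But your verification of (\ref{qred-triple}) requires the pure operator: writing $p_k=p_{\PQ}(X_k)$ and $C$ for the crossing's action, the difference of the two braidings acts by $C(p_iF)-p_{i+1}C(F)$, and by the twisted Leibniz rule this equals the required multiplication operator $(X_i-\pq X_{i+1})\frac{p_i-p_{i+1}}{X_i-X_{i+1}}$ exactly when $C=(X_i-\pq X_{i+1})\partial_i$; with the $(T_i-\pq)$-normalization one instead picks up a residual term $-(1+\pq)(p_i-p_{i+1})F$, and (\ref{qred-triple}) fails. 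Conversely, with the pure Demazure normalization your triple-point computation is correct, but then the crossing squares to $+(1+\pq)$ times itself, i.e.\ it satisfies (\ref{Hecke-1}--\ref{Hecke-triple}) rather than (\ref{qHecke-1}--\ref{qHecke-triple}), so the claim in your first paragraph that the qHecke relations ``hold automatically'' is false. As written, then, your proposal verifies the relation set (\ref{Hecke-1}--\ref{Hecke-triple}), (\ref{qHcost}), (\ref{qred-triple}) rather than the advertised presentation. This tension is partly inherited from the paper itself --- the proof of Proposition \ref{prop:cyclo-Hecke} treats the crossing as $T_i+1$ even though the definition of $\tilde{\EuScript{F}}(\pq,\PQ_\bullet)$ cites the qHecke relations --- but a complete proof must commit to one dictionary between diagrams and operators and check \emph{every} relation against that single choice, which your proposal does not do.
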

\begin{proof}
  This is a standard computation with Demazure operators.
\end{proof}

Now, we can allow several red
lines at various values of $x$, each of which carries a multiset of
values in $\gls{U}$.  For the sake of notation, we'll still denote the
multiset given by all such labels as $\{Q_1,\dots, Q_\ell\}$, with a strand with the label
$Q_{i}$ at  $x$-value
$\vartheta_i$.  So, the situation we had previously considered was
$\vartheta_i=0$ for all $i$.

\begin{definition}\label{def:tF}
  A  rank $n$ {\bf type F Hecke diagram} is a rank $n$ affine Hecke diagram
  with a vertical red lines inserted at $x=\vartheta_i$. The diagram must avoid
  tangencies and triple points with these strands as well, and only
  allow isotopies that preserve these conditions.  We give an example
  of such a diagram below:
  \[
\begin{tikzpicture}[baseline,very thick,green!50!black, xscale=2,yscale=1.3]
  \draw (-.5,-1) to[out=90,in=-90] (-1,0) to[out=90,in=-90](.5,1);
  \draw (.5,-1) to[out=90,in=-90] (1,1);
  \draw  (1,-1) to[out=90,in=-90] 
  node[midway,fill=green!50!black,inner sep=3pt]{} (0,1);
  \draw (-1, -1) to[out=90,in=-90] (-.5,0) to[out=90,in=-90] (-1,1);
  \draw (0,-1) to[out=90,in=-90] (-.5,1);
  \draw[wei](.25,-1) --(.25,1); \draw[wei](-.25,-1) --(-.25,1);
    \draw[wei](1.25,-1) --(1 .25,1);
\end{tikzpicture}\]

  Let the rank $n$ {\bf type F affine Hecke algebra} $\gls{tF}$ be the algebra generated over $\K[[h,\Bz]]$ by
  rank $n$  type F Hecke diagrams for $\vartheta$ with $n$ strands modulo the
  local relations (\ref{qHecke-1}--\ref{qHecke-triple}) and
  (\ref{qdumb}--\ref{qred-triple})).
\end{definition}
These algebras have a polynomial representation $\cP^{\vartheta}$ using the same maps
attached to basic diagrams as Proposition \ref{tilde-poly}, but now
with idempotents, and thus copies of Laurent polynomials, indexed by weakly increasing
functions $\nu\colon [1,\ell]\to [0,m]$ with $\nu(i)$ giving the number of green strands to
the left of the $i$th red strand.   This was carried out in more
detail in \cite[Prop. 1.10]{MS18}.  As before, any two idempotents
corresponding to $\nu$ are isomorphic by straight-line diagrams.

These affine type F algebras have ``finite-type'' quotients.  In other
contexts, these have been called ``steadied'' or ``cyclotomic'' quotients.
\begin{definition}\label{def:cF}
  The rank $n$ {\bf type F Hecke algebra} $\gls{cF}$ is the quotient of $\gls{tF}$ by the 2-sided ideal generated by $e_{B}$ for
  every set $B$ possessing an element $b\in B$ with $b<\vartheta_i$
  for all $i$.  
\end{definition} 
Pictorially, the idempotents $e_B$ we kill possess a green strand
which is left of all the red strands.  In \cite{Webmerged}, the corresponding ideal for KLR
algebras is called the {\bf violating ideal} and we will use the same
terminology here. 
Given ${\mathscr{D}}$ a collection of subsets of $\R$, we'll let $\tilde{\EuScript{F}}^\vartheta_{\mathscr{D}}
(\pq,\PQ_{\bullet}),{\EuScript{F}}^\vartheta_{\mathscr{D}} (\pq,\PQ_{\bullet})$
denote the subalgebras of $\gls{tF},{\EuScript{F}}^\vartheta (\pq,\PQ_{\bullet})$ spanned by
diagrams whose tops and bottoms lie in the set ${\mathscr{D}}$.

\begin{proposition}\label{prop:cyclo-Hecke}
  The rank $n$  cyclotomic affine Hecke  algebra $\gls{cmH}(\pq,\PQ_\bullet)$ for the parameters $\{\PQ_1,\dots,
  \PQ_\ell\}$ is isomorphic to the rank $n$  type F$_1$ Hecke algebra
  $\EuScript{F}_{\mathscr{D}^\circ} (\pq,\PQ_{\bullet})$.
\end{proposition}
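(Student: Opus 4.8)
The plan is to identify $\EuScript{F}_{\mathscr{D}^\circ}(\pq,\PQ_\bullet)$ with the corner $e_0\EuScript{F}(\pq,\PQ_\bullet)e_0$ and then read off the cyclotomic relation diagrammatically. First I would note that the steadying relation kills every idempotent $e_i\in\mathscr{D}^\circ$ with $i\geq 1$: such an $e_i$ has a green strand to the left of the unique red line at $x=0$, so it lies in the violating ideal. Hence of the idempotents in $\mathscr{D}^\circ$ only $e_0$ survives, and $\EuScript{F}_{\mathscr{D}^\circ}(\pq,\PQ_\bullet)=e_0\EuScript{F}(\pq,\PQ_\bullet)e_0$, the algebra of diagrams whose $m$ green strands all lie to the right of the red line at top and bottom.

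Restricted to these strands, the relations (\ref{qHecke-1}--\ref{qHecke-triple}) are exactly those presenting $\mH_h$, with a crossing in the role of $T_r-\pq$ and a square in the role of $X_r$, as recorded in the discussion of (\ref{qHecke-1}--\ref{qHecke-triple}) after Theorem \ref{type-O-Hecke} and in the $\ck>0$ case of Theorem \ref{wdHecke}. This gives a homomorphism $\phi\colon\mH_h\otimes_\K\K[[\Bz]]\to e_0\EuScript{F}(\pq,\PQ_\bullet)e_0$. To see that it factors through the cyclotomic quotient, take the diagram in which the leftmost green strand dips to the left of the red line and immediately returns. By (\ref{qHcost}) this diagram is multiplication by $p_{\PQ}(X_1)=\prod_{i=1}^\ell(X_1-\PQ_i)=C(X_1)$, while it also factors through an idempotent $e_B$ having a point left of the red line and so vanishes in the steadied quotient. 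Hence $C(X_1)=0$ in $e_0\EuScript{F}e_0$, and $\phi$ descends to $\bar\phi\colon\mH^{Q_\bullet}_{h,\Bz}\to\EuScript{F}_{\mathscr{D}^\circ}(\pq,\PQ_\bullet)$.

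Surjectivity I would prove by a diagram reduction. Given any diagram of $e_0\EuScript{F}e_0$, each excursion of a green strand to the left of the red line can be isolated---using the triple-point relation (\ref{qred-triple}) to slide the remaining strands and crossings out of its way---until it becomes a bare cap-cup around the red line, at which point (\ref{qHcost}) rewrites it as a polynomial $p_{\PQ}$ in the squares. Iterating expresses every diagram as a $\K[[h,\Bz]]$-combination of ordinary affine Hecke diagrams lying entirely to the right of the red line, all of which are in the image of $\phi$. Thus $\bar\phi$ is surjective.

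The heart of the matter, and the step I expect to be the main obstacle, is injectivity: that $C(X_1)=0$ together with the Hecke relations exhausts all relations. I would settle it by producing a common faithful module. Let $V=\mH^{Q_\bullet}_{h,\Bz}\otimes_{\mH_h}\cP^-$ be the cyclotomic quotient of the signed polynomial representation. This is at the same time the steadied quotient of the $f_0$-summand of the polynomial representation of Proposition \ref{tilde-poly} (which is visibly a copy of $\cP^-\otimes_\K\K[[\Bz]]$): the operators arising from leftward excursions act through the factor $p_{\PQ}=C$, and annihilating them is precisely the passage to $V$, so $V$ is a module over $\EuScript{F}_{\mathscr{D}^\circ}(\pq,\PQ_\bullet)$ whose action $\bar\phi$ intertwines with that of $\mH^{Q_\bullet}_{h,\Bz}$. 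Any element of $\ker\bar\phi$ then annihilates $V$, so injectivity follows once $V$ is known to be faithful over $\mH^{Q_\bullet}_{h,\Bz}$. This faithfulness is the one genuinely delicate input; it is the deformed Ariki--Koike analogue of the basis theorem asserting that $\{X_1^{a_1}\cdots X_m^{a_m}T_w: 0\le a_i<\ell,\ w\in S_m\}$ is a $\K[[h,\Bz]]$-basis, and it may equally be phrased as the statement that $\bar\phi$ is a surjection of free $\K[[h,\Bz]]$-modules of equal rank $\ell^m m!$, hence an isomorphism over the domain $\K[[h,\Bz]]$.
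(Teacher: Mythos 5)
Your first three paragraphs are sound and run parallel to the paper's own argument: the identification $\EuScript{F}_{\mathscr{D}^\circ}(\pq,\PQ_\bullet)=e_0\EuScript{F}(\pq,\PQ_\bullet)e_0$, the map $\phi$ from $\mH_h\otimes_\K\K[[\Bz]]$, the observation via (\ref{qHcost}) that $p_\PQ(X_1)$ lies in the violating ideal, and the reduction of leftward excursions using (\ref{qHcost}) and (\ref{qred-triple}) are exactly the mechanisms the paper invokes (it cites the proof of \cite[3.16]{Webmerged} for the reduction). The convention wobble (crossing as $T_r-\pq$ versus $T_r+1$) is present in the paper itself and is not the problem.

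The genuine gap is your injectivity step. The module $V=\mH^{Q_\bullet}_{h,\Bz}\otimes_{\mH_h}\cP^-$ is \emph{not} faithful over $\mH^{Q_\bullet}_{h,\Bz}$. Since $\cP^-$ is induced from the sign character of the finite Hecke algebra $H_{\mathrm{fin}}\subset\mH_h$, one has $V\cong \mH^{Q_\bullet}_{h,\Bz}\otimes_{H_{\mathrm{fin}}}\mathrm{sgn}$, which is free of rank $\ell^m$ over $\K[[h,\Bz]]$, while $\mH^{Q_\bullet}_{h,\Bz}$ has rank $\ell^m m!$. Already for $\ell=1$ the algebra is the finite Hecke algebra and $V$ is its one-dimensional sign representation, so $\Ann(V)\neq 0$ whenever $m\geq 2$; for general $\ell$ the only simple constituents of $V$ (generically) are those whose restriction to $H_{\mathrm{fin}}$ contains $\mathrm{sgn}$, which excludes for instance the one-dimensional module on which each $T_i$ acts by $\pq$. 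Hence $\ker\bar\phi\subseteq\Ann(V)$ yields nothing. Your fallback phrasing is not a repair: faithfulness of $V$ and the Ariki--Koike basis theorem are \emph{not} equivalent (the latter is true, the former false), and the equal-rank argument presupposes that $\EuScript{F}_{\mathscr{D}^\circ}(\pq,\PQ_\bullet)$ is free of rank $\ell^m m!$ over $\K[[h,\Bz]]$, which is essentially the proposition being proved. The paper avoids this trap by keeping faithfulness at the affine level, where it holds: $\cP^-$ is faithful over $\mH_h$ and factors through the representation of Proposition \ref{tilde-poly}, so $\mH_h\otimes_\K\K[[\Bz]]$ embeds into $\tilde{\EuScript{F}}_{\mathscr{D}^\circ}(\pq,\PQ_\bullet)$; injectivity of $\bar\phi$ then reduces to showing that the preimage of the violating ideal is contained in the cyclotomic ideal, and this is proved by the very reduction you used for surjectivity, applied now to elements of the violating ideal while tracking that each such element, once rewritten so that only a single green strand passes left of the red line, acquires the full factor $p_\PQ(X_1)$ and hence lies in the image of the cyclotomic ideal. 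In short, your third paragraph already contains the correct tool; your fourth paragraph should be replaced by running that diagrammatic reduction on the violating ideal rather than by any faithfulness claim about quotient modules.
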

\begin{proof}
  If we let $e$ be the idempotent given by green lines at $x=1,\dots, 
 n$, then we see by Theorem \ref{wdHecke}, there is a map from the
  affine Hecke algebra sending $X_i$ and $T_i+1$ to diagrams as in
  (\ref{Hecke-gens}) which induces a map
  $\iota\colon \gls{mH}(\pq)\to \EuScript{\tilde F}_{\mathscr{D}^\circ}
  (\pq,\PQ_{\bullet})$. 
Pulling back the polynomial representation of $\EuScript{\tilde F}_{\mathscr{D}^\circ}
  (\pq,\PQ_{\bullet})$ gives the polynomial representation of
  $\gls{mH}(\pq)$, which is faithful, so this map is injective.

  Applying 
  (\ref{qHcost}) at the leftmost strand shows that $p_\PQ(X_1)$ lies
  in the violating ideal, which is the kernel of the map to
  $\EuScript{F}(\pq,\PQ_{\bullet})$.  Thus, $\iota$ induces a
  map $\gls{cmH}(\pq,\PQ_\bullet)\to \EuScript{F}_{\mathscr{D}^\circ}
  (\pq,\PQ_{\bullet}). $  This map is clearly surjective, since any
  F$_1$ Hecke diagram with no violating strand is a composition of the
  images.
  
 Thus, we need only show that the preimage of
  the violating ideal under $\iota$ lies in the cyclotomic ideal.  As in the proof of
  \cite[3.16]{Webmerged}, the relations (\ref{qHcost},\ref{qred-triple})
  allow us to reduce to the case where only a single green strand
  passes into the left half of the plane.  In this case, we gain a
  factor of $p_\PQ(X_1)$, showing that this is in the cyclotomic
  ideal. 
\end{proof}

\subsection{Stendhal algebras}
\label{sec:stendhal-algebras}

The type F algebras in the KLR family have been introduced in
\cite{Webmerged}.  
Let $o_1=\min(\vartheta_i)$, and
$o_j=\min_{\vartheta_i>o_{j-1}}(\vartheta_i)$; so these are the real
numbers that occur as $\vartheta_i$ in increasing order.
Consider the sequence $\la_j=\sum_{\vartheta_i=o_j}\omega_{Q_i}$ of
dominant weights for $\mathfrak{g}_{\gls{U}}$, and let $S_{u,j}=\{s\in
[1,\ell]|\vartheta_s=o_j, u=Q_s\}$.

In
\cite[Def. 4.7]{Webmerged}, we defined algebras
$T^\bla,\tilde{T}^\bla$ attached to this list of weights.  These
cannot match $\gls{htF},\gls{cF}$ since they are not naturally modules over
  $\K[[h,\Bz]]$; however, we will recover them when we set
  $h=z_1=\cdots=z_\ell=0$.  Instead, we should consider deformed
  versions of these algebras $\tilde{T}^\bla(h,\Bz),T^\bla(h,\Bz)$
  introduced in \cite[\S 3.2]{WebwKLR} based on
  the canonical deformation of weighted KLR algebras.  As usual, we'll
  let $y_r$ denote the sum of all straight line Stendhal diagrams with
  a dot on the $r$th strand.
  \begin{definition}\label{def:tT}
    We let the rank $n$ affine Stendhal algebra $\gls{tT}$ be the quotient of the algebra
    freely spanned over $\K[h,\Bz]$ by  Stendhal diagrams (as defined
    in \cite[\S 3.2]{Webmerged}) with $n$
    black strands, with the local
    relations (\ref{first-QH}--\ref{triple-dumb}) and the
    local relations\newseq
    \begin{equation*}\label{cost}\subeqn
      \begin{tikzpicture}[very thick,scale=.8,baseline=1.2cm]
        \draw (-2.8,0) +(0,-1) .. controls (-1.2,0) ..  +(0,1)
        node[below,at start]{$u$}; \draw[wei] (-1.2,0) +(0,-1)
        .. controls (-2.8,0) ..  +(0,1) node[below,at
        start]{$\la_j$}; \node at (-.3,0) {$=p_{u,j}$};
        \node[scale=1.5] at (.5,0) {$\Bigg($}; \node[scale=1.5] at
        (3.5,0) {$\Bigg)$}; \draw[wei] (2.8,0) +(0,-1) -- +(0,1)
        node[below,at start]{$\la_j$}; \draw (1.2,0) +(0,-1) -- +(0,1)
        node[below,at start]{$u$}; \fill (1.2,0) circle (3pt);
        \draw[wei] (-2.8,3) +(0,-1) .. controls (-1.2,3) ..  +(0,1)
        node[below,at start]{$\la_j$}; \draw (-1.2,3) +(0,-1)
        .. controls (-2.8,3) ..  +(0,1) node[below,at start]{$u$};
        \node at (-.3,3) {$=p_{u,j}$};\node[scale=1.5] at (.5,3)
        {$\Bigg($}; \node[scale=1.5] at (3.5,3) {$\Bigg)$}; \draw
        (2.8,3) +(0,-1) -- +(0,1) node[below,at start]{$u$};
        \draw[wei] (1.2,3) +(0,-1) -- +(0,1) node[below,at
        start]{$\la_j$}; \fill (2.8,3) circle (3pt);
      \end{tikzpicture}\qquad
      p_{u,j}(y)=\prod_{s\in S_{u,j}}(y-z_{s})
    \end{equation*}
    \begin{equation*}\subeqn\label{dumb}
      \begin{tikzpicture}[very thick,baseline=2.85cm,scale=.8]
        \draw[wei] (-3,3) +(1,-1) -- +(-1,1); \draw (-3,3) +(0,-1)
        .. controls (-4,3) ..  +(0,1); \draw (-3,3) +(-1,-1) --
        +(1,1); \node at (-1,3) {=}; \draw[wei] (1,3) +(1,-1) --
        +(-1,1); \draw (1,3) +(0,-1) .. controls (2,3) ..  +(0,1);
        \draw (1,3) +(-1,-1) -- +(1,1); \end{tikzpicture}
    \end{equation*}
    \begin{equation*}\subeqn\label{red-dot}
      \begin{tikzpicture}[very thick,baseline,scale=.8]
        \draw(-3,0) +(-1,-1) -- +(1,1); \draw[wei](-3,0) +(1,-1) --
        +(-1,1); \fill (-3.5,-.5) circle (3pt); \node at (-1,0) {=};
        \draw(1,0) +(-1,-1) -- +(1,1); \draw[wei](1,0) +(1,-1) --
        +(-1,1); \fill (1.5,.5) circle (3pt);
      \end{tikzpicture}
    \end{equation*}
    \begin{equation*}\label{red-triple}\subeqn
      \begin{tikzpicture}[very thick,baseline=-2pt,scale=.8]
        \draw [wei] (0,-1) -- node[below,at start]{$\la_j$} (0,1);
        \draw(.5,-1) to[out=90,in=-30] node[below,at start]{$u$}
        (-.5,1); \draw(-.5,-1) to[out=30,in=-90] node[below,at
        start]{$v$} (.5,1);
      \end{tikzpicture}- \begin{tikzpicture}[very thick,baseline=-2pt,scale=.8]
        \draw [wei] (0,-1) --node[below,at start]{$\la_j$} (0,1);
        \draw(.5,-1) to[out=150,in=-90] node[below,at start]{$u$}
        (-.5,1); \draw(-.5,-1) to[out=90,in=-150] node[below,at
        start]{$v$} (.5,1);
      \end{tikzpicture}
      =\delta_{u,v}\sum_{p=1}^{\al_u^\vee(\la_j)}\sum_{a+b=p-1}\mathsf{e}_{\al_i^\vee(\la_j)-p}\big(\{-z_s\mid
      s\in S_{u,j}\}\big) \cdot \Bigg(\begin{tikzpicture}[very thick,baseline=-2pt,scale=.8]
        \draw [wei]  (0,-1) -- (0,1);
        \draw(.5,-1) to[out=90,in=-90] node[midway,circle,
        fill=black,inner sep=2pt,label=right:{$b$}]{} (.5,1);
        \draw(-.5,-1) to[out=90,in=-90] node[midway,circle,
        fill=black,inner sep=2pt,label=left:{$a$}]{} (-.5,1);
      \end{tikzpicture}\Bigg).
    \end{equation*}
    The rank $n$  Stendhal algebra $\gls{cT}$ is the quotient of
    $\gls{tT}$ by violating diagrams as defined in
    \cite[Def. 4.3]{Webmerged}.
  \end{definition}
  Again for the sake of comparison, here is an example of a Stendhal
  diagram of rank 5: \[
\begin{tikzpicture}[baseline,very thick,xscale=2,yscale=1.3]
  \draw (-.5,-1) to[out=90,in=-90]  node[below,at start]{$u_2$} (-1,0) to[out=90,in=-90](.5,1);
  \draw (.5,-1) to[out=90,in=-90] node[below,at start]{$u_4$} (1,1);
  \draw  (1,-1) to[out=90,in=-90] node[below,at start]{$u_5$}
  node[circle, midway,fill=black,inner sep=3pt]{} (0,1);
  \draw (-1, -1) to[out=90,in=-90]node[below,at start]{$u_1$} (-.5,0) to[out=90,in=-90] (-1,1);
  \draw (0,-1) to[out=90,in=-90] node[below,at start]{$u_3$} (-.5,1);
  \draw[wei](.25,-1) -- node[below,at start]{$\la_2$} (.25,1); \draw[wei](-.25,-1) --node[below,at start]{$\la_1$} (-.25,1);
    \draw[wei](1.25,-1) --node[below,at start]{$\la_3$} (1 .25,1);
\end{tikzpicture}\] 
This algebra is graded with Stendhal diagrams given their usual
grading, summing local contributions given by 
\[
  \deg\tikz[baseline,very thick,scale=1.5]{\draw (.2,.3) --
    (-.2,-.1) node[at end,below, scale=.8]{$u$}; \draw (.2,-.1) --
    (-.2,.3) node[at start,below,scale=.8]{$v$};}
  =-\langle\al_u,\al_v\rangle \qquad \deg\tikz[baseline,very
  thick,scale=1.5]{\draw (0,.3) -- (0,-.1) node[at
    end,below,scale=.8]{$u$} node[midway,circle,fill=black,inner
    sep=2pt]{};}=2 \qquad
  \deg\tikz[baseline,very thick,scale=1.5]{\draw[wei] (.2,.3) --
    (-.2,-.1) node[at end,below, scale=.8]{$\la$}; \draw (.2,-.1) --
    (-.2,.3) node[at start,below,scale=.8]{$u$};}
  =\deg\tikz[baseline,very thick,scale=1.5]{\draw (.2,.3) --
    (-.2,-.1) node[at end,below, scale=.8]{$u$}; \draw[wei] (.2,-.1) --
    (-.2,.3) node[at start,below,scale=.8]{$\la$};}=\al_u^{\vee}(\la),\]
and the variables $h$ and $z_i$ each have degree 2.

 The algebra  $\gls{tT}$ has a polynomial representation $P_{\bla}$, given in
 \cite[Lem. 4.12]{Webmerged}.  In order to match the Hecke side,
 we will use the version of this representation that has
  \[P_{uv}(a,b)=
  \begin{cases}
    b-a+h & u=qv\\
1 & u\neq qv.
  \end{cases}
  \] 

For every loading, we have an associated function $\kappa$, with
$\kappa(k)$ equal to
the number of black strands to the left of $o_k$, and a sequence
$(u_1,\dots, u_n)$ given by the eigenvalues we've attached to each
black strand.  We let $e_{\Bu,\kappa}$ be the idempotent associated to this
data in $\gls{tT}$ and by extension in
$\gls{htT}$ and $\gls{cT}$.

\subsection{Isomorphisms}
\label{sec:isomorphisms}

As in types O and W, these algebras have polynomial-style
representations (graded in the case of $\gls{tT}$, with the data
\begin{equation*}
\mathbb{K}=\K[[h]]\qquad A=\gls{tF}\qquad
  B=\K[[h]][X^{\pm}]\qquad I=Bh+B\prod_{u\in \gls{U}}(X-u)\qquad P=\cP_{\theta}
\end{equation*}
\begin{equation*}
\mathbb{K}=\K[h]\qquad A=\gls{tT} \qquad
  B=\K[h,y]\qquad I=Bh+By \qquad P=P_{\bla}.
\end{equation*}
and the polynomial representations we have defined, with the latter
being graded.  This is proven exactly as in the earlier cases:
\begin{enumerate}
\item An explicit basis indexed by permutations, constructed for
  $\gls{tF}$ in \cite[Prop. 1.15]{MS18} and $\gls{tT} $ in
  \cite[Prop. 4.16]{Webmerged}, shows the required freeness.
  \item The
    centrality of $Z$ is immediate from the relations.
    \item The
  faithfulness of the representations is checked in
  \cite[Prop. 1.11]{MS18} and implicit in the proof of
  \cite[Prop. 4.16]{Webmerged}, respectively.
\end{enumerate}

  We let $\gls{htF}$ and 
  $\gls{htT}$ be the completions of these rings with respect to the
  induced topology.  Since $h$ and $\Bz$ both have positive
  degree, $\gls{htT}$  is a complete module over $\K[[h,\Bz]]$.

\begin{theorem}\label{F-isomorphism}
  We have an isomorphism $\gls{htF}\cong \gls{htT}$ which induces an
  isomorphism $\gls{cF}\cong \gls{cT}$, given by 
\[\epsilon_{\Bu,\kappa}\mapsto e_{\Bu,\kappa}\qquad\qquad X_r\mapsto\sum_{\Bu,\kappa}u_rb({y_r})e_{\Bu,\kappa}\qquad \qquad \tikz[baseline,very thick,scale=1.5, green!50!black]{\draw [wei] (.2,.3) --
  (-.2,-.1); \draw 
  (.2,-.1) -- (-.2,.3);} \mapsto\tikz[baseline,very
  thick,scale=1.5]{\draw [wei] (.2,.3) -- (-.2,-.1); \draw (.2,-.1) --
    (-.2,.3);}\]
 \vspace{-3mm}

\[\tikz[baseline,very thick,scale=1.5, green!50!black]{\draw [wei] (-.2,.3) --
  (.2,-.1); \draw 
  (-.2,-.1) -- (.2,.3);}  \epsilon_{\Bu,\kappa} \mapsto
\frac{\displaystyle\prod_{\vartheta_s=o_k}(u_rb({y_r-z_s})-\PQ_s)}{\displaystyle\prod_{s\in
  S_{u_r,j}} (y_r-z_s)}\, \tikz[baseline,very thick,scale=1.5]{\draw [wei] (-.2,.3) --
  (.2,-.1); \draw 
  (-.2,-.1) -- (.2,.3);} e_{\Bu,\kappa} 
\qquad \qquad \tikz[baseline,very thick,scale=1.5, green!50!black]{\draw (.2,.3) --
  (-.2,-.1); \draw
  (.2,-.1) -- (-.2,.3);}\, e_{\Bu,\kappa}\mapsto
A_r^{\Bu}\tikz[baseline,very thick,scale=1.5]{\draw (.2,.3) --
  (-.2,-.1); \draw
  (.2,-.1) -- (-.2,.3);} e_{\Bu,\kappa} \]
where the leftmost green/black strand shown is the $r$th from the
left, and the red strand shown is the $j$th from the left.
\end{theorem}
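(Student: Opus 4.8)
The plan is to prove this exactly as in types O and W, by comparing the two polynomial representations rather than by checking relations directly. On the Hecke side I would use the polynomial representation of Proposition \ref{tilde-poly}, extended from a single red strand to the full collection $\vartheta$ in the evident way: idempotents are now indexed by pairs $(\Bu,\kappa)$, a black/red crossing in the ``positive to negative'' direction merely shifts summand, and the opposite direction acts by multiplication by $\prod_{\vartheta_s=o_k}(X_r-\PQ_s)$. On the KLR side I would use the polynomial representation of $\tilde{T}^\bla(h,\Bz)$ coming from the weighted KLR polynomial representation (Proposition \ref{W-KLR-poly}) together with the costrand relation (\ref{cost}). Both algebras act faithfully on these representations, so it suffices to produce a vector space isomorphism $\gamma_p$ between their completions intertwining the images of all basic diagrams under the asserted formulas.

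First I would define $\gamma_p$ by the same substitution $X_i\mapsto u_ib(y_i)$ used in Propositions \ref{O-isomorphism} and \ref{W-isomorphism}, now carried out summand-by-summand over the idempotents $e_{\Bu,\kappa}$. With this in hand, the matches for the square $X_r$, for the green/green crossings (the $A_r^{\Bu}$ factor), and for the green/ghost crossings are \emph{identical} to the computations already carried out in Theorem \ref{W-isomorphism}: red strands play no role in those relations and the ghost structure governing the black strands is unchanged. Likewise one orientation of the red/black crossing is sent to itself, since in both polynomial representations that crossing simply shifts the bookkeeping function $\kappa$ without introducing any multiplier. Thus the only genuinely new verification is the other orientation of the red/black crossing.

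This red/black crossing match is the heart of the argument and the main obstacle. On the Hecke side this crossing acts, via the extension of Proposition \ref{tilde-poly}, by multiplication by $\prod_{\vartheta_s=o_k}(X_r-\PQ_s)$; applying $\gamma_p$ turns this into a power series in $y_r$ and the $z_s$. On the KLR side the corresponding crossing acts, by relation (\ref{cost}), by the costrand polynomial $p_{u_r,j}(y_r)=\prod_{s\in S_{u_r,j}}(y_r-z_s)$. The asserted rescaling factor is precisely the ratio of these two multipliers, and the content of the step is to check that this ratio is the invertible power series displayed in the theorem. The subtlety, just as for the $A_r^{\Bu}$ factor in type O, is reconciling the \emph{multiplicative} deformation $\PQ_s=Q_se^{-z_s}$ of the Hecke parameters with the \emph{additive} deformation $y_r-z_s$ of the KLR side: one checks that each factor $u_rb(y_r-z_s)-\PQ_s$ with $Q_s=u_r$ carries a single honest zero matching $y_r-z_s$, so that the denominator of the ratio cancels, while the factors with $Q_s\neq u_r$ contribute invertible units, precisely as in the case analysis following \eqref{eq:1}. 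One also checks compatibility with the red triple-point relations (\ref{qred-triple}) and (\ref{red-triple}), which follows formally from the single-crossing match together with the triple-point identities already established on each side.

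Finally, since every rescaling factor appearing in the map is an invertible element of the relevant completed local ring, the induced algebra map is invertible in both directions, giving the isomorphism $\widehat{\tilde{\EuScript{F}}^\vartheta}(\pq,\PQ_\bullet)\cong\widehat{\tilde{T}^\bla}(h,\Bz)$. To pass to the cyclotomic quotients I would match the two violating ideals: a diagram with a black strand to the left of all red strands on the Hecke side corresponds, under the crossing formulas above, to a violating Stendhal diagram on the KLR side, and conversely. This is the same correspondence used in Proposition \ref{prop:cyclo-Hecke} and, more generally, in the proof of \cite[3.16]{Webmerged}, so the isomorphism descends to $\EuScript{F}^\vartheta(\pq,\PQ_\bullet)\cong T^\bla(h,\Bz)$.
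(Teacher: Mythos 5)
Your proposal follows essentially the same route as the paper: reduce the theorem to a comparison of completed polynomial representations under the substitution $X_r\mapsto u_rb(y_r)$, observe that the green-only computations are already done, isolate the red/black crossing as the only genuinely new verification, and identify the rescaling factor there as the ratio of the Hecke multiplier $\prod_{\vartheta_s=o_k}(X_r-\PQ_s)$ to the KLR multiplier $\prod_{s\in S_{u_r,j}}(y_r-z_s)$, checked to be a unit in the completion. Your explicit matching of the violating ideals to descend to $\EuScript{F}^\vartheta(\pq,\PQ_\bullet)\cong T^\bla(h,\Bz)$ is a detail the paper's proof leaves implicit, and is a welcome addition.

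One correction, though: you have partly conflated type F with type WF. The algebra $\tilde{\EuScript{F}}^\vartheta(\pq,\PQ_\bullet)$ is built from type O Hecke diagrams with red lines added---there are no ghosts---and its KLR partner $\tilde{T}^\bla(h,\Bz)$ is spanned by Stendhal diagrams subject to the type O KLR relations (\ref{first-QH}--\ref{triple-dumb}) together with (\ref{cost}--\ref{red-triple}); its polynomial representation is the (deformed) one of \cite{Webmerged}, not Proposition \ref{W-KLR-poly}. This matters for your second paragraph: the green/green crossing match you need is the one of Proposition \ref{O-isomorphism}, which is exactly what produces the factor $A_r^{\Bu}$ appearing in the statement of the theorem; the crossing formula (\ref{crossing-match}) of Theorem \ref{W-isomorphism} has a different shape (it sends the crossing to a multiple of $\psi_r-1$ when $u_r\neq u_{r+1}$), and in the wKLR polynomial representation every strand crossing acts by a Demazure operator, so computing with that representation would not reproduce $A_r^{\Bu}\psi_r$. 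Likewise there are no green/ghost crossings to check in this theorem. With that substitution your argument is the paper's, which additionally simplifies matters by noting that every generator and relation involves at most one red line, so one may assume $\ell=1$ and use Proposition \ref{tilde-poly} verbatim, making the identification of the Hecke-side multiplier with $p_{\PQ_\bullet}(u_rb(y_r))$ immediate.
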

\begin{proof}
 Since all generators and relations involve at most one red line, we
  can assume that $\ell=1$, and use the representation of Proposition
  \ref{tilde-poly} for the Hecke side.  That diagrams with only green
  strands have actions that match is just Theorem \ref{O-isomorphism}.
  Thus, we only need to check the crossing of green and red strands is
  intertwined with a crossing of red and black strands.  Since we have
  only one red strand, we have that
  $\prod_{\vartheta_s=o_k}(u_re^{y_r}-\PQ_s)=p_{\PQ_\bullet}(u_re^{y_r})$.
  Thus, comparing the representation of Proposition \ref{tilde-poly} with
  the obvious $\K[h,\Bz]$-deformation of the action in
  \cite[Lem. 4.12]{Webmerged} yields the result.
\end{proof}

\section{Type WF}
\label{sec:antigens-ab}

\subsection{Type WF Hecke algebras}

Finally, we consider these two complications jointly.  As mentioned
before, these are unlikely to be familiar algebras for the reader, but 
these results will ultimately be
useful in understanding category $\cO$ of rational Cherednik algebras
in \cite{WebRou}.  
\begin{definition}\label{def:tWF}
  A rank $n$ {\bf type WF Hecke diagram}  is a type W Hecke diagram
  with vertical red lines inserted at $x=\vartheta_i$. The diagram must avoid
  tangencies and triple points between any combination of these
  strands, green strands and ghosts, and only
  allow isotopies that preserve these conditions.  An example of a rank 5 type WF diagram with $g<0$ is given below:
\[
\begin{tikzpicture}[baseline,very thick,green!50!black, xscale=2.5,yscale=1.4]
  \draw (-.5,-1) to[out=90,in=-90] (-1,0) to[out=90,in=-90](.5,1);
  \draw[dashed] (-1.25,-1) to[out=90,in=-90] (-1.75,0) to[out=90,in=-90](-.25,1);
  \draw (.5,-1) to[out=90,in=-90] (1,1);
    \draw[dashed] (-.25,-1) to[out=90,in=-90] (.25,1);
  \draw  (1,-1) to[out=90,in=-90] 
  node[midway,fill=green!50!black,inner sep=3pt]{} (0,1);
    \draw[dashed]  (.25,-1) to[out=90,in=-90] (-.75,1);
  \draw (-1, -1) to[out=90,in=-90] (-.5,0) to[out=90,in=-90] (-1,1);
  \draw (0,-1) to[out=90,in=-90] (-.5,1);
  \draw[dashed] (-1.75, -1) to[out=90,in=-90] (-1.25,0) to[out=90,in=-90] (-1.75,1);
  \draw[dashed] (-.75,-1) to[out=90,in=-90] (-1.25,1);
    \draw[wei](.125,-1) --(.125,1); \draw[wei](-.9,-1) --(-.9,1);
    \draw[wei](1.125,-1) --(1.125,1);
\end{tikzpicture}\]

Let the rank $n$ {\bf type WF affine Hecke algebra} $\gls{tWF}^\vartheta (\pq,\PQ_{\bullet})$ be the 
$\K[[h,\Bz]]$-algebra generated by
  type WF Hecke diagrams modulo the local relations
  (\ref{nilHecke-2}--\ref{eq:triple-point-2}, \ref{qdumb}--\ref{qHcost}) and \newseq
  \begin{equation*}\label{PCsmart-red-triple}\subeqn
    \begin{tikzpicture}[very thick,baseline=-2pt,scale=.7]
      \draw [wei] (0,-1) -- (0,1); \draw[green!50!black](.5,-1)
      to[out=90,in=-30] (-.5,1); \draw[green!50!black](-.5,-1)
      to[out=30,in=-90] (.5,1);
    \end{tikzpicture}- \begin{tikzpicture}[very
      thick,baseline=-2pt,scale=.7] \draw [wei] (0,-1) -- (0,1);
      \draw[green!50!black](.5,-1) to[out=150,in=-90] (-.5,1);
      \draw[green!50!black](-.5,-1) to[out=90,in=-150] (.5,1);
    \end{tikzpicture}
    =\sum_{i=1}^\ell\sum_{a+b=i-1}\mathsf{e}_{\ell-i}(-\PQ_\bullet)\cdot \Bigg(\begin{tikzpicture}[very thick,baseline=-2pt,scale=.7]
      \draw [wei]  (0,-1) -- (0,1);
      \draw[green!50!black](.5,-1) to[out=90,in=-90] node[midway,
      fill=green!50!black,inner sep=2.5pt,label=right:{$b$}]{} (.5,1);
      \draw[green!50!black](-.5,-1) to[out=90,in=-90] node[midway,
      fill=green!50!black,inner sep=2.5pt,label=left:{$a$}]{} (-.5,1);
    \end{tikzpicture}\Bigg).
  \end{equation*}
 \begin{equation*}\label{PC-dumb-red-triple}\subeqn
   \begin{tikzpicture}[very thick,baseline=-2pt,yscale=.8,]
    \draw [wei]  (0,-1) -- (0,1);
       \draw[green!50!black,dashed](.5,-1) to[out=90,in=-30] (-.5,1);
       \draw[green!50!black](-.5,-1) to[out=30,in=-90] (.5,1);
  \end{tikzpicture}= \begin{tikzpicture}[very thick,baseline=-2pt,yscale=.8,]
    \draw [wei]  (0,-1) -- (0,1);
       \draw[green!50!black,dashed](.5,-1) to[out=150,in=-90] (-.5,1);
       \draw[green!50!black](-.5,-1) to[out=90,in=-150] (.5,1);
  \end{tikzpicture}\qquad \qquad   \begin{tikzpicture}[very thick,baseline=-2pt,yscale=.8,]
    \draw [wei]  (0,-1) -- (0,1);
       \draw[green!50!black](.5,-1) to[out=90,in=-30] (-.5,1);
       \draw[green!50!black,dashed](-.5,-1) to[out=30,in=-90] (.5,1);
  \end{tikzpicture}= \begin{tikzpicture}[very thick,baseline=-2pt,yscale=.8]
    \draw [wei]  (0,-1) -- (0,1);
       \draw[green!50!black](.5,-1) to[out=150,in=-90] (-.5,1);
       \draw[green!50!black,dashed](-.5,-1) to[out=90,in=-150] (.5,1);
  \end{tikzpicture}\qquad \qquad   \begin{tikzpicture}[very thick,yscale=.8,baseline=-2pt]
    \draw [wei]  (0,-1) -- (0,1);
       \draw[green!50!black,dashed](.5,-1) to[out=90,in=-30] (-.5,1);
       \draw[green!50!black,dashed](-.5,-1) to[out=30,in=-90] (.5,1);
  \end{tikzpicture}= \begin{tikzpicture}[very thick,yscale=.8,baseline=-2pt]
    \draw [wei]  (0,-1) -- (0,1);
       \draw[green!50!black,dashed](.5,-1) to[out=150,in=-90] (-.5,1);
       \draw[green!50!black,dashed](-.5,-1) to[out=90,in=-150] (.5,1);
  \end{tikzpicture}
\end{equation*}
\end{definition}
\begin{remark}
  The degenerate version of this algebra is defined by the local relations
  (\ref{nilHecke-2}--\ref{NilHecke3}, \ref{eq:triple-point-2}--\ref{eq:deg-triple-point-1},
  \ref{qdumb}--\ref{qHcost}, \ref{PCsmart-red-triple}--\ref{PC-dumb-red-triple})
\end{remark}
Note that relation (\ref{qred-triple}) is {\it not} true in this
algebra.  As before, we should think of type F diagrams as type WF
diagrams with $\ck$ so small that we cannot see that the ghost and
strand are separate.  Using this approach, we can see that relation
(\ref{qred-triple}) for a strand and a ghost together is a consequence
of (\ref{ghost-bigon1}) and (\ref{PCsmart-red-triple}), much as in
Theorem \ref{wdHecke}.

This algebra has a polynomial representation $P^\vartheta$, defined
using the same formulae as those of Propositions \ref{W-poly} and
\ref{tilde-poly}.  We leave the routine computations that these are compatible
with (\ref{PCsmart-red-triple}) and (\ref{PC-dumb-red-triple}) to the reader.

We call an idempotent {\bf unsteady} if the strands can be
divided into two groups with a gap $>|\ck|$ between them
and all red strands in the right hand group, and {\bf steady} otherwise. 

Thus, the idempotents shown in (\ref{steady}) are steady, and those in
(\ref{unsteady}) are unsteady.
\newseq
\[\subeqn\label{steady}\tikz[baseline=-2pt,very thick, xscale=3,green!50!black] { 
\draw (.05,-.5) -- (0.05,.5);
\draw[dashed] (-.55,-.5) -- (-.55,.5);
\draw[dashed] (-.1,-.5) -- (-.1,.5);
\draw (.5,-.5) -- (.5,.5);
\draw[wei](.35,-.5) -- (.35,.5);
 }
 \hspace{3cm} \tikz[baseline=-2pt,very thick, xscale=3,green!50!black] { 
\draw (1.25,-.5) -- (1.25,.5);
\draw[dashed] (.65,-.5) -- (.65,.5);
\draw[dashed] (-.1,-.5) -- (-.1,.5);
\draw (.5,-.5) -- (.5,.5);
\draw[wei](.35,-.5) -- (.35,.5);
 }\]
\[\subeqn\label{unsteady}\tikz[baseline=-2pt,very thick, xscale=3,green!50!black] { 
\draw (-.25,-.5) -- (-0.25,.5);
\draw[dashed] (-.85,-.5) -- (-.85,.5);
\draw[dashed] (-.1,-.5) -- (-.1,.5);
\draw (.5,-.5) -- (.5,.5);
\draw[wei](.35,-.5) -- (.35,.5);
 } \hspace{3cm}\tikz[baseline=-2pt,very thick, xscale=3,green!50!black] { 
\draw (-.25,-.5) -- (-0.25,.5);
\draw[dashed] (-.85,-.5) -- (-.85,.5);
\draw[dashed] (-.4,-.5) -- (-.4,.5);
\draw (.2,-.5) -- (.2,.5);
\draw[wei](.35,-.5) -- (.35,.5);
 }\]

 \begin{definition}\label{def:cWF} 
   Let the rank $n$  {\bf type WF Hecke algebra}
   $\gls{cWF}^\vartheta (\pq,\PQ_{\bullet})\cong \PC^\vartheta$
   be the quotient of
   $\gls{tWF}^\vartheta (\pq,\PQ_{\bullet})$ by the
   ideal generated by all unsteady idempotents.
 \end{definition}

We can also call this a ``pictorial
  Cherednik algebra,'' referring to the fact that the representation
  category of this algebra when $\K=\C$ and we set $h=z_i=0$ is
  equivalent to the category
  $\cO$ over a Cherednik algebra for the group $\Z/\ell\Z\wr S_n$ for
  certain parameters.  More precisely, we consider the category $\cO$  over the
  rational Cherednik algebra $\mathsf{H}$ for the group $\Z/\ell\Z\wr
  S_n$  with arbitrary $\C$-valued parameters  $k,
  s_1,\dots, s_\ell$, using the conventions of \cite[\S 2.1]{WebRou}
  and consider the algebra
  $\gls{cWF}^\vartheta (\pq,\PQ_{\bullet})$ where fix the
  number of green strands to be $n$, and fix the parameters $g=\Re(k),
 \vartheta_p=\Re(ks_p),  q=e^{2\pi ik}$, and $Q_p=e^{2\pi i k s_p}$.
 \begin{theorem}[\mbox{\cite[Cor. 3.10]{WebRou}}]
 For the parameters discussed above, we have an equivalence of
 categories $\cO\cong
 \gls{cWF}^\vartheta (\pq,\PQ_{\bullet})\mmod$.  
\end{theorem}
Obviously, any choice of $q,Q_\bullet$ can be realized this way for
some $k,s_\bullet$, which are not unique; for any $g$ and $\vartheta$,
we can adjust the choice of parameters $ k,s_\bullet$ to yield an
block of the Cherednik category $\cO$ that matches the representations
of $\gls{cWF}^\vartheta (\pq,\PQ_{\bullet})$, using the process
of Uglovation discussed in \cite[Def. 2.8]{WebRou}.
  This is also useful to consider as a common generalization of all
  the algebras we have considered.  
Given a collection ${\mathscr{D}}$ of subsets of $\R$, we'll let $\gls{tWF}^\vartheta_{\mathscr{D}}
(\pq,\PQ_{\bullet}),\gls{cWF}_{\mathscr{D}}^\vartheta (\pq,\PQ_{\bullet})$
denote the subalgebras of $\gls{tWF}^\vartheta
(\pq,\PQ_{\bullet}), \gls{cWF}^\vartheta (\pq,\PQ_{\bullet})$ spanned by
diagrams whose tops and bottoms lie in the set ${\mathscr{D}}$. 

As in earlier cases, the algebra  $\gls{tWF}^\vartheta  (\pq,\PQ_{\bullet})$
  is equipped with a polynomial representation $P^\vartheta$ using the rules of
  Proposition \ref{W-poly} for diagrams only involving green strands and
  Proposition \ref{tilde-poly} for basic diagrams involving red and
  green strands.

  \subsubsection{Relation to cyclotomic Schur algebras}
\label{sec:relation-hecke-schur}

We can extend Theorem \ref{wdHecke} to this setting.  As before, let
$\wellsep=\{B_s=\{s,2s,3s,\dots, ns\}\}$ for $s$ some real
number with $s\gg |\ck|,|\vartheta_i|$. 
\begin{theorem}\label{wfHecke}
  There is an isomorphism of $\gls{cWF}^\vartheta_\wellsep
  (\pq,\PQ_{\bullet})$ to the rank $n$  cyclotomic affine Hecke algebra
 $\gls{cmH}(\pq,\PQ_\bullet)$ for the 
  parameters $\{\PQ_1,\dots, \PQ_\ell\}$.  
\end{theorem}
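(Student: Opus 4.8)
The plan is to combine the two ingredients already in hand: Theorem \ref{wdHecke}, which says that on the set $\mathscr{O}=\{B_s\}$ the green strands (together with their ghosts) generate a copy of the ordinary affine Hecke algebra, and the argument of Proposition \ref{prop:cyclo-Hecke}, which extracts the cyclotomic relation from the red strands. Because $s\gg|\ck|,|\vartheta_i|$, in the identity diagram $e_{B_s}$ every green strand and its ghost travel together and lie entirely to the right of all red strands, so I may treat a type WF diagram on $\mathscr{O}$ as an affine Hecke diagram decorated by red lines, with the ghosts serving only to make the green crossings behave like $T_r-\pq$ (taking the $\ck>0$ convention, as in Proposition \ref{prop:cyclo-Hecke}; the $\ck<0$ case with $T_r+1$ is identical). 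Concretely I would define a map $\mH^{Q_\bullet}_{h,\Bz}\to\EuScript{WF}^\vartheta_{\mathscr{O}}(\pq,\PQ_{\bullet})$ sending $X_r$ to the square on the $r$th green strand and $T_r-\pq$ to the crossing of the $r$th and $(r+1)$st green strands.

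First I would check this map is well defined. The affine Hecke relations (\ref{qHecke-1}--\ref{qHecke-triple}) among the green crossings and squares hold by exactly the computation proving Theorem \ref{wdHecke}: they are local moves among green strands and ghosts, and since all red strands sit far to the left they never participate. It then remains to verify the cyclotomic relation $C(X_1)=p_{\PQ}(X_1)=0$, which I would obtain as in Proposition \ref{prop:cyclo-Hecke}. Applying the cost relation (\ref{qHcost}) drags the leftmost green strand across the nearest red line, producing the corresponding partial product of the $(X_1-\PQ_i)$ plus diagrams with an extra crossing that are absorbed into lower terms; iterating across every red line in turn pulls that strand to the left of all of them and yields $\prod_{i=1}^{\ell}(X_1-\PQ_i)=p_{\PQ}(X_1)$ times a diagram whose leftmost green strand now lies to the left of every red strand. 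The latter factors through an unsteady idempotent, hence vanishes in $\EuScript{WF}^\vartheta_{\mathscr{O}}$, and since $\PQ_i=Q_ie^{-z_i}$ this is precisely $C(X_1)=0$.

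For bijectivity I would argue through the faithful polynomial representations, the method used throughout. Surjectivity is the easier half: the generators are green crossings, ghost crossings, squares and red--green crossings, and every excursion of a green strand to the left of one or more red lines must cross those lines an even number of times; the relations (\ref{PCsmart-red-triple}), (\ref{PC-dumb-red-triple}) and (\ref{qHcost}) resolve each such double crossing into a polynomial in the squares (a product of the relevant partial products $p_{\PQ^{(j)}}$) times green-only diagrams, so every element already lies in the image of the green crossings and squares, i.e.\ of the affine Hecke algebra. For injectivity I would match the polynomial representation of Propositions \ref{W-poly} and \ref{tilde-poly} with the signed polynomial representation of the cyclotomic Hecke algebra: the map intertwines the $\EuScript{WF}$-action on (the relevant subspace of) $P_{\mathscr{O}}$ with the cyclotomic action, so since the latter representation is faithful any element in the kernel acts by zero and hence vanishes.

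The main obstacle is precisely this reduction and its bookkeeping: one must show that green strands threading between red lines produce no genuinely new operators, so that the only relation the red strands impose beyond those of the affine Hecke algebra is the single cyclotomic one — equivalently, that the steadying (unsteady) ideal coincides with the cyclotomic ideal generated by $p_{\PQ}(X_1)$. Part of this is verifying that the type F relation (\ref{qred-triple}) holds for a strand together with its ghost, which, as noted before the statement, follows from (\ref{PCsmart-red-triple}) and the ghost-bigon relations much as the type O relations followed from the type W relations in Theorem \ref{wdHecke}; the remaining point is that the identification is insensitive to the positions $\vartheta_i$, reflecting on the Hecke side that the cyclotomic quotient depends only on the multiset $\{\PQ_1,\dots,\PQ_\ell\}$ (compare Theorem \ref{F-isomorphism} and \cite[3.16]{Webmerged}).
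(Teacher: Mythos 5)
Your overall strategy coincides with the paper's: identify the green-strand subalgebra on $\mathscr{O}$ with the affine Hecke algebra via Theorem \ref{wdHecke}, show $C(X_1)$ dies in $\EuScript{{WF}}^\vartheta_\mathscr{O}(\pq,\PQ_{\bullet})$ by equating it, via (\ref{qHcost}), with a diagram whose middle slice is unsteady, and get surjectivity by using the relations to rewrite every diagram so it stays to the right of the red lines. Those three steps are essentially the paper's proof and are fine.

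The gap is your injectivity step. You invoke faithfulness of ``the signed polynomial representation of the cyclotomic Hecke algebra,'' but there is no such representation: the signed polynomial representation of $\mH_h$ does not factor through the cyclotomic quotient, since $C(X_1)$ acts on $\cP^-$ by a nonzero multiplication operator; and symmetrically, the polynomial representation of Propositions \ref{W-poly} and \ref{tilde-poly} is a representation of $\widetilde{\EuScript{WF}}^\vartheta_{\mathscr{O}}(\pq,\PQ_{\bullet})$, not of its steadied quotient, because unsteady idempotents act on it by nonzero projections rather than by zero. What faithfulness actually gives -- and this is exactly how it is used in the proof of Proposition \ref{prop:cyclo-Hecke} -- is injectivity of the map from $\mH_h$ into the \emph{unquotiented} algebra $\widetilde{\EuScript{WF}}^\vartheta_{\mathscr{O}}(\pq,\PQ_{\bullet})$. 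Once that is known, injectivity of the induced map on quotients is precisely the assertion that the preimage of the unsteady ideal is contained in the cyclotomic ideal, and no appeal to a faithful module can replace proving this containment. You correctly flag this containment as ``the main obstacle'' in your closing paragraph, but the proposal never actually establishes it; the paper does, by the diagrammatic reduction of Proposition \ref{prop:cyclo-Hecke} (following \cite[3.16]{Webmerged}): using (\ref{qHcost}) together with relation (\ref{qred-triple}) -- which in the WF setting must first be derived for a strand travelling with its ghost from (\ref{PCsmart-red-triple}), (\ref{PC-dumb-red-triple}) and the ghost bigon relations -- any element of the unsteady ideal can be rewritten so that only a single green strand passes to the left of the red lines, at which point it exhibits an explicit factor of $C(X_1)$. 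Without carrying out that reduction, your argument proves only that $\mH^{Q_\bullet}_{h,\Bz}$ surjects onto $\EuScript{{WF}}^\vartheta_\mathscr{O}(\pq,\PQ_{\bullet})$, not that the two are isomorphic.
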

\begin{proof}
  First, since $s\gg |\vartheta_i|$, all strands start and end to the
  right of all red strands.  Thus, we have that every diagram can be
  written, using the relations, in terms of diagrams that remain to
  the right of all red strands.  Thus, we have a surjective map from
  the type W affine Hecke algebra $\waha_\cO$ onto  $\gls{cWF}^\vartheta_\wellsep
  (\pq,\PQ_{\bullet})$.  
By Theorem \ref{wdHecke}, we can identify $\waha_\cO$ with the usual
affine Hecke algebra $\hmH$. 

Now consider a diagram
where the first strand starts at $(s,0)$, goes linearly to
$(-s,\nicefrac 12)$ then back to $(s,1)$, while all others remain
straight. This diagram is unsteadied, since the horizontal slice at
$y=\nicefrac{1}2$ is unsteadied by the leftmost strand.  By the relation (\ref{qHcost}), this diagram is equal to $\prod_{i=1}^\ell
  (X_1-\PQ_i)$ which thus lies in the kernel of the map of the affine
  Hecke algebra to $\gls{cWF}^\vartheta_\wellsep
  (\pq,\PQ_{\bullet})$.   

  As in the proof of \ref{prop:cyclo-Hecke}, we can easily check that
  the diagram discussed above generates the kernel so $\gls{cWF}^\vartheta_\wellsep (\pq,\PQ_{\bullet})$ is isomorphic to
  this cyclotomic quotient.
\end{proof}

There is also a version of this theorem relating the type WF Hecke
algebras to cyclotomic Schur algebras.  Assume that the parameters
$\vartheta_i$ are ordered with $\vartheta_1<\dots <\vartheta_\ell$.
Fix a set $\Lambda$ of $\ell$-multicompositions of $n$ which is an
upper order ideal in dominance order.  We'll be interested in the
cyclotomic $q$-Schur algebra $\gls{cycSchur}$ of rank $n$  attached to the
data $(\pq,\PQ^\bullet)$ defined by Dipper, James and Mathas
\cite[6.1]{DJM}; let $\glsdisp{cycSchur}{\mathscr{S}^-(\Lambda)}$ be the signed version of
this algebra defined using signed permutation modules.

Let $r$ be
the maximum number of parts of one of the components of $\xi\in
\Lambda$.  Choose constants $\epsilon \ll \ck$ and $s$ so
that \[|\ck|+m\epsilon < s < \min_{k\neq
  n}(|\vartheta_k-\vartheta_n|/r);\] of course, this is only possible
is $r|\ck| <|\vartheta_k-\vartheta_n|$ for all $k\neq n$.  In this
case, we associate to every multicomposition $\xi\in \Lambda$ a subset $E_\xi$
that consists of the points $\vartheta_p+i\epsilon+js$ for every
$1\leq j\leq \xi^{(p)}_i$.

In order to simplify the proof below, we'll use some results from
\cite{WebRou}, in particular, a dimension calculation based on the
cellular basis constructed in \cite[Thm. 2.26]{WebRou}.  Since
\cite{WebRou} cites some of the results of this paper, the reader
might naturally worry that the author has created a loop of citations
and thus utilized circular reasoning.  However, we only use these in the
proof of Proposition \ref{cqs-morita}, which is not used in \cite{WebRou}.

As in
Section \ref{sec:weight-gener}, there is an idempotent diagram
$e'_\xi$ on this
subset where we act on the strands with $x$-value in $[
\vartheta_p+js,\vartheta_p+js+\epsilon\mu^{(p)}_j]$ the idempotent 
$y^{\mu_j-1}_1\cdots y_{\mu_j-1}\partial_{w_0}$.  Let
$e_\Lambda=\sum_{\xi\in \Lambda} e_\xi'$.  Let $\mathscr{D}$ be any
collection of $n$-element subsets containing $E_\xi$ for all $\xi\in \Lambda$.

\begin{proposition}\label{cqs-morita}
We have an isomorphism $\gls{cycSchur}\cong e_\Lambda \gls{cWF}^\vartheta_\mathscr{D}
  (\pq,\PQ_{\bullet})e_\Lambda $ if  $\ck<0$, and $\glsdisp{cycSchur}{\mathscr{S}^-(\Lambda)}\cong e_\Lambda \gls{cWF}^\vartheta_\mathscr{D}
  (\pq,\PQ_{\bullet})e_\Lambda $ if $\ck>0$.  If $\Lambda$ contains all
  $\ell$-multipartitions of $n$,  this subalgebra is Morita equivalent to $\gls{cWF}^\vartheta_\mathscr{D}
  (\pq,\PQ_{\bullet})$ via the obvious bimodule.
\end{proposition}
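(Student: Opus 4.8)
The plan is to mimic the proof of Theorem~\ref{waha-Schur} in the framed setting, with Theorem~\ref{wfHecke} playing the role of Theorem~\ref{wdHecke}. Recall that the cyclotomic $q$-Schur algebra is by definition $\mathscr{S}(\Lambda)=\operatorname{End}_{\mathscr{H}}\big(\bigoplus_{\xi\in\Lambda}M^\xi\big)$, where $\mathscr{H}=\mH^{Q_\bullet}_{h,\Bz}$ is the Ariki--Koike algebra and $M^\xi$ is the Dipper--James--Mathas permutation module of the multicomposition $\xi$. After enlarging $\mathscr{D}$ to contain a position set $B$ placing all $m$ strands to the right of every red strand (which alters the relevant truncated algebras only up to the isomorphisms afforded by straight-line diagrams), Theorem~\ref{wfHecke} identifies $e_{B}\EuScript{WF}^\vartheta_{\mathscr{D}}(\pq,\PQ_\bullet)e_{B}$ with $\mathscr{H}$. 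The proposition then reduces to identifying each weight space $e'_\xi\EuScript{WF}^\vartheta_{\mathscr{D}}e_{B}$, as a right $\mathscr{H}$-module, with $M^\xi$ (and with its signed analogue when $\ck>0$), since there is a natural map $e_\Lambda\EuScript{WF}^\vartheta_{\mathscr{D}}e_\Lambda\to\operatorname{End}_{\mathscr{H}}\big(\bigoplus_{\xi}e'_\xi\EuScript{WF}^\vartheta_{\mathscr{D}}e_{B}\big)$ by left multiplication that I will show is an isomorphism.

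First I would compute the generator of each weight space. Beginning from the straight-line diagram carrying the strands from the positions $E_\xi$ to $B$, dragging a green strand rightward past a red strand introduces, through the cost relation~(\ref{qHcost}), precisely a factor $p_\PQ(X_i)=\prod_j (X_i-\PQ_j)$; accumulating these over the red strands that each block of strands must cross reproduces the cyclotomic prefactor of the Dipper--James--Mathas generator $m_\xi$. The nilHecke idempotent $e'_\xi$ acting within each component of $\xi$ then contributes the symmetrizer over the corresponding Young subgroup $\mathfrak{S}_\xi$, exactly as $\epsilon_{\Bk}$ did in Theorem~\ref{waha-Schur}; when $\ck>0$, the identical computation with $T_i-\pq$ in place of $T_i+1$ produces the antisymmetrizer, hence the signed permutation module. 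Together these exhibit $e'_\xi\EuScript{WF}^\vartheta_{\mathscr{D}}e_{B}\cong M^\xi$ as right $\mathscr{H}$-modules.

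Next I would verify that the structure map is an isomorphism. For injectivity, $e_\Lambda\EuScript{WF}^\vartheta_{\mathscr{D}}e_\Lambda$ acts faithfully on the $e_\Lambda$-truncation of the WF polynomial representation assembled from Propositions~\ref{W-poly} and~\ref{tilde-poly}, and this faithful action factors through its action on $\bigoplus_\xi M^\xi$. For surjectivity I would match cellular bases: the Lemma above provides a $\xi!$-to-$1$ map from $E_\xi$-tableaux to semistandard tableaux of type $\xi$, and the purpose of the idempotent $e'_\xi$ is precisely to collapse this multiplicity, so that the diagrams $e'_\xi D_w e'_{\xi'}$ indexed by double cosets hit the Dipper--James--Mathas semistandard basis of $\mathscr{S}(\Lambda)$. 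This matching is the step I expect to be the main obstacle, since it requires reconciling the diagrammatic double-coset combinatorics --- together with the cyclotomic factors acquired when strands cross red lines --- with the explicit algebraic basis; the numerology $|\ck|+m\epsilon<s<\min_{k\neq n}|\vartheta_k-\vartheta_n|/r$ is exactly what forces the $E_\xi$-tableau condition to coincide with semistandardness.

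Finally, for the Morita claim I would show that when $\Lambda$ contains all $\ell$-multipartitions the idempotent $e_\Lambda$ is full. As in the discussion following Theorem~\ref{waha-Schur}, each homogeneous nilHecke idempotent $e'_\xi$ is a summand of the plain idempotent $e_{E_\xi}$, with $\EuScript{WF}^\vartheta_{\mathscr{D}}e_{E_\xi}$ a $\xi!$-fold direct sum of copies of $\EuScript{WF}^\vartheta_{\mathscr{D}}e'_\xi$; since every loading is isomorphic by a straight-line diagram to one supported on some $E_\xi$, the two-sided ideal generated by $e_\Lambda$ is all of $\EuScript{WF}^\vartheta_{\mathscr{D}}(\pq,\PQ_\bullet)$. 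Hence $\EuScript{WF}^\vartheta_{\mathscr{D}}e_\Lambda$ is a progenerator and the obvious bimodule induces the desired Morita equivalence.
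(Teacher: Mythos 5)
Your skeleton agrees with the paper's: identify the corner $e_{D_{t,m}}\EuScript{WF}^\vartheta_{\mathscr{D}}(\pq,\PQ_\bullet)e_{D_{t,m}}$ with $\mH^{Q_\bullet}_{h,\Bz}$ via Theorem \ref{wfHecke}, identify $e'_\xi\EuScript{WF}^\vartheta_{\mathscr{D}}e_{D_{t,m}}$ with the permutation module $P_\xi$ using a generator obtained from the straight-line diagram and the cost relation (\ref{qHcost}), and then prove the structure map into $\End_{\mH^{Q_\bullet}_{h,\Bz}}\bigl(\bigoplus_\xi P_\xi\bigr)$ is an isomorphism. However, the two steps you flag as delicate are handled in the paper by external input from \cite{WebRou}, and your substitutes do not work. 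For injectivity, you invoke faithfulness of the $e_\Lambda$-truncated polynomial representation; but the polynomial representation of Propositions \ref{W-poly} and \ref{tilde-poly} is a representation of the \emph{affine} algebra $\EuScript{\widetilde{WF}}^\vartheta(\pq,\PQ_\bullet)$, and it does not descend to the steadied quotient $\EuScript{WF}^\vartheta$: unsteady idempotents act on it by nonzero projections, so the unsteady ideal does not act by zero, even after truncating by $e_\Lambda$. The paper instead cites \cite[\ref{r-lem:-1-faithful}]{WebRou} for injectivity of the map \eqref{eq:2}. For surjectivity, you propose matching the double-coset diagrams $e'_\xi D_w e'_{\xi'}$ with the Dipper--James--Mathas semistandard basis and correctly identify this as the main obstacle---but the paper never performs this matching. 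It sidesteps it by a dimension count: by the cellularity theorem \cite[\ref{r-th:cellular}]{WebRou}, $\dim e_\Lambda\EuScript{WF}^\vartheta_{\mathscr{D}}e_\Lambda$ equals the number of pairs of semistandard tableaux of the same shape with types in $\Lambda$, which is $\dim\mathscr{S}(\Lambda)$, so the injective map is automatically an isomorphism. The same cellularity-based count (the dimension of $e_{D_{t,m}}\EuScript{WF}^\vartheta_{\mathscr{D}}e'_\xi$ being $1/\xi!$ times that of $e_{D_{t,m}}\EuScript{WF}^\vartheta_{\mathscr{D}}e_{E_\xi}$) is also what turns your generator computation into an isomorphism $e'_\xi\EuScript{WF}^\vartheta_{\mathscr{D}}e_{D_{t,m}}\cong P_\xi$; generation by $q_\xi$ alone only gives a surjection, and the paper moreover has to treat general $\xi$ by passing to $\xi^\circ$ and cutting out a copy of $P_\xi$ inside $P_{\xi^\circ}$.

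The Morita step contains a more serious error. Your fullness argument rests on the claim that every loading in $\mathscr{D}$ is isomorphic by a straight-line diagram to one supported on some $E_\xi$. This is false: $\mathscr{D}$ is an \emph{arbitrary} collection of $m$-element subsets containing the $E_\xi$, and a straight-line diagram carrying a strand across a ghost or a red line is not invertible---composing it with its mirror image yields, by relations (\ref{green-ghost-bigon1}) and (\ref{qHcost}), multiplication by $X_r-\pq X_s$ or by $p_\PQ(X_r)$, which are not units. (This is why the paper asserts such straight-line isomorphisms only between idempotents with the same relative position to the red strands, as in the discussion around Proposition \ref{tilde-poly}.) The paper proves the Morita equivalence by counting simples instead: if some simple of $\EuScript{WF}^\vartheta_{\mathscr{D}}(\pq,\PQ_\bullet)$ were killed by $e_\Lambda$, the algebra would have strictly more simples than $\mathscr{S}(\Lambda)$; but by \cite[\ref{r-th:cellular}]{WebRou} it is cellular with cells indexed by $\ell$-multipartitions of $m$, and by \cite[6.16]{DJM} that is exactly the number of simples of $\mathscr{S}(\Lambda)$. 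In short, your proposal cannot be completed as written: the injectivity, the dimension comparisons, and the simple-counting all require the cellular structure and faithfulness lemma imported from \cite{WebRou}, and the fullness-by-isotopy argument must be discarded.
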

\begin{proof}
  For $t\gg 0$ sufficiently large, we have that $e_{D_{t,m}}\gls{cWF}^\vartheta_\mathscr{D} (\pq,\PQ_{\bullet}) e_{D_{t,m}}$ is the cyclotomic
  Hecke algebra $\gls{cmH}(\pq,\PQ_\bullet) $ by Theorem \ref{wfHecke}.
  Thus, we have that $e_{D_{t,m}}\gls{cWF}^\vartheta_\mathscr{D}
  (\pq,\PQ_{\bullet}) e_\Lambda $ is a bimodule over $\gls{cmH}(\pq,\PQ_\bullet)$
  and the algebra $e_\Lambda \gls{cWF}^\vartheta_\mathscr{D}
  (\pq,\PQ_{\bullet})e_\Lambda $.

Let $q_\xi$ be the diagram that linearly
  interpolates between $D_{t,m}$ and $E_\xi$, times $e'_\xi$ on the
  right.  We'll concentrate on the case where $\kappa<0$.  The same argument as the proof of Theorem \ref{waha-Schur}
  shows that $(T_i-q)q_\xi=0$ if the $i$th and $i+1$st strands lie in
  one of the segments $[
\vartheta_p+js,\vartheta_p+js+\epsilon\mu^{(p)}_j]$ in $E_\xi$. If
$\kappa>0$, we instead see that $(T_i+1)q_\xi=0$. Note
that $q_\xi$ generates $e_{D_{t,m}}\gls{cWF}^\vartheta_\mathscr{D}
  (\pq,\PQ_{\bullet}) e_\Lambda $ as a left module.

If $\xi^{(p)}=\emptyset$ for $p<\ell$, then this shows that sending
$m_\xi\mapsto q_\xi$ induces a map of $P_\xi$
to   $e_{D_{t,m}}\gls{cWF}^\vartheta_\mathscr{D}
  (\pq,\PQ_{\bullet}) e'_\xi$, which is surjective since $q_{\xi}$
  generates.  

For an arbitrary $\xi$, let $\xi^\circ$ be the multicomposition where
$(\xi^\circ)^{(p)}=\emptyset$ for $p<\ell$, and $(\xi^\circ)^{(\ell)}$
is the concatenation of $\xi^{(p)}$ for all $p$.  We have a natural
map $e_{D_{t,m}}\gls{cWF}^\vartheta_\mathscr{D}
  (\pq,\PQ_{\bullet}) e'_\xi\to e_{D_{t,m}}\gls{cWF}^\vartheta_\mathscr{D}
  (\pq,\PQ_{\bullet}) e'_{\xi^\circ}$ given by the straight-line
  diagram interpolating between $\xi$ and $\xi^\circ$.  Applying
  relation (\ref{qHcost}) many times, we find that this map sends
  \[q_{\xi}\mapsto\prod_{j\leq|\xi^{(1)}|+\cdots +|\xi^{(k-1)}|}
  (L_j-Q_k)q_{\xi_\circ}.\]  The submodule of $P_{\xi^\circ}$
  generated by this element is a copy of $P_{\xi}$, thus we have a
  surjective map $e_{D_{t,m}}\gls{cWF}^\vartheta_\mathscr{D}
  (\pq,\PQ_{\bullet}) e'_\xi\to P_{\xi}$.

  Dimension considerations show that this map is an isomorphism.  
The dimension of $e_{D_{t,m}}\gls{cWF}^\vartheta_\mathscr{D}
  (\pq,\PQ_{\bullet}) e'_\xi$ is $1/\xi! $ times the
  dimension of $e_{D_{t,m}}\gls{cWF}^\vartheta
  (\pq,\PQ_{\bullet}) e_{E_\xi}$, since $e_{E_\xi}$ is the sum of
  $\xi! $ orthogonal idempotents isomorphic to $e'_\xi$.  Thus, by
  \cite[Th. 2.26]{WebRou}, it is equal to $1/\xi!$ times the
number of pairs of tableaux of the same shape, one standard and of
type $E_{\xi}$.  
    The entries of an $E_{\xi}$-tableau are of the form 
$\vartheta_p+i\epsilon+js$ for $(i,j,p)$ a box of the diagram of
$\xi$.  A filling will be a $E_{\xi}$ if and only if the replacement
$\vartheta_p+i\epsilon+js\mapsto j_p$ is a semi-standard
tableau\footnote{This tableau uses $\ell$ alphabets (denoted using
  subscripts) with the order $1_1<2_1<3_1\cdots< 1_2<2_2<3_2\cdots<1_3<\cdots.$} increasing weakly along columns and strongly
  along rows if $\kappa>0$ and {\it vice versa} if $\kappa<0$.  In
  fact, this gives a $\xi!:=\prod \xi^{(p)}_k!$-to-$1$ map from
  $E_\xi$-tableau to semi-standard tableau of type $\xi$. 

Thus, the dimension of $e_{D_{t,m}}\gls{cWF}^\vartheta_\mathscr{D}
  (\pq,\PQ_{\bullet}) e'_\xi$ is the number of pairs of tableaux of
the same shape, one standard and one
semi-standard of type $\xi$.  This is the same as the dimension of the
permutation module associated to $\xi$, so the surjective map $e_{D_{t,m}}\gls{cWF}^\vartheta_\mathscr{D}
  (\pq,\PQ_{\bullet}) e'_\xi\to P_{\xi}$ must be an isomorphism.

We have from \cite[Lem. 3.3]{WebRou} that the map
\begin{equation}
e_\Lambda \gls{cWF}^\vartheta
  (\pq,\PQ_{\bullet})e_\Lambda \to \End_{\gls{cmH}(\pq,\PQ_\bullet)}(e_{D_{t,m}}\gls{cWF}^\vartheta_\mathscr{D}
  (\pq,\PQ_{\bullet}) e_\Lambda )\label{eq:2}
\end{equation}
is injective.  Applying \cite[Th. 2.26]{WebRou} again, the
dimension of $e_\Lambda \gls{cWF}^\vartheta_\mathscr{D}
(\pq,\PQ_{\bullet})e_\Lambda $ is equal to the number of pairs of
semi-standard tableaux of the same shape and (possibly different) type
in $\Lambda$.  Thus, the dimension coincides with $\dim
\gls{cycSchur}$. This shows that the injective map \eqref{eq:2} must be
an isomorphism.

  Finally, we wish to show that the bimodules $e_\Lambda  \gls{cWF}^\vartheta_\mathscr{D}
  (\pq,\PQ_{\bullet})$ and $\gls{cWF}^\vartheta_\mathscr{D}
  (\pq,\PQ_{\bullet})e_\Lambda $ induce a Morita equivalence.  For this, it
  suffices to show that no simple $\gls{cWF}^\vartheta_\mathscr{D}
  (\pq,\PQ_{\bullet})$-module is killed by $e_\Lambda $. If this were the
  case, $\gls{cWF}^\vartheta_\mathscr{D}
  (\pq,\PQ_{\bullet})$ would have strictly more simple modules than the
  cyclotomic $q$-Schur algebra. However, in
  \cite[Th. 2.26]{WebRou}, we show that this algebra is
  cellular with the number of cells equal to the number of
  $\ell$-multipartitions of $n$.  By \cite[6.16]{DJM}, this is the
  number of simples over  $\gls{cycSchur}$ as well.
\end{proof}
This also allows us to show:
\begin{theorem}\label{affine-schur-morita}
  The idempotent $\gls{eprime}$ induces a Morita equivalence between the affine
  Schur algebra $\gls{cS}_h(n,m)$ and the type W affine Hecke algebra
  $\waha_{{\gls{Cset}}}(\pq)$.  
\end{theorem}
\begin{proof}
  Since the algebra $\waha_{\mathscr{B}}(\pq)$ is Noetherian, if
  $\waha_{\mathscr{B}}(\pq)\gls{eprime} \waha_{\mathscr{B}}(\pq)\neq
  \waha_{\mathscr{B}}(\pq)$, then there is at least one simple
  module $L$ over $\waha_{\mathscr{B}}(\pq)/\waha_{\mathscr{B}}(\pq)\gls{eprime}
  \waha_{\mathscr{B}}(\pq)$, which must be killed by $\gls{eprime}$.  

This simple module must be finite dimensional since $\waha_{\mathscr{B}}(\pq)$ is of finite rank
  over the center of this module.  Thus, $X_1$ acting on this simple
  module satisfies some polynomial equation $p(X_1)=0$, and $L$ factors through the map to a type
  WF Hecke algebra $\gls{cWF}^\vartheta$ where we choose
  $\vartheta_i\ll \vartheta_{i+1}$ for all $i$, and $\vartheta_\ell\ll
  0$, with $Q_i$ being the roots of $p$ with multiplicity.

  By Proposition \ref{cqs-morita}, the identity of
  $\gls{cWF}^\vartheta$ can be written as a sum of cellular basis
  vectors factoring through the idempotent $e_{\xi}'$ at
  $y=\nicefrac 12$.  We have some choice in the definition of these
  vectors, and we can assure that all crossings in them occur to the
  right of all red line.  The relation (\ref{qHcost}) allows us to
  pull all strands to the right.  Once all the strands are to the
  right of all red lines, this slice at $y=\nicefrac 12$ will be the
  idempotent $e_{\xi^\circ}'$, times a polynomial in the dots.  Since
  this idempotent $e_{\xi^\circ}'$ lies in
  $\gls{eprime} \waha_{\mathscr{B}}(\pq)\gls{eprime}$, we must have that $\gls{eprime}$ acts
  non-trivially on $L$, contradicting our assumption.  This shows that
  $\waha_{\mathscr{B}}(\pq)\gls{eprime} \waha_{\mathscr{B}}(\pq)=
  \waha_{\mathscr{B}}(\pq)$, proving the Morita equivalence.
\end{proof}

\subsection{Weighted KLR algebras}
\label{sec:weight-klr-algebr-1}

There's also a KLR algebra in type WF.  This is also a weighted KLR
algebra as defined in \cite{WebwKLR}, but now for the Dynkin diagram
$\gls{U}$ with a Crawley-Boevey vertex added, as discussed in \cite[\S
3.1]{WebwKLR}.  
\begin{definition}\label{def:WF-KLR}
  A rank $n$ {\bf WF KLR diagram} is a wKLR diagram (as defined in Definition
  \ref{def:wKLR}, with labels in $\gls{U}$)
  with vertical red lines inserted at $x=\vartheta_i$. The diagram must avoid
  tangencies and triple points between any combination of these
  strands, green strands and ghosts, and only
  allow isotopies that preserve these conditions.  Here is an example
  of such a diagram:
  \[
\begin{tikzpicture}[baseline,very thick,xscale=2.5,yscale=1.4]
  \draw (-.5,-1) to[out=90,in=-90] node[below,at start]{$u_2$}  (-.9,0) to[out=90,in=-90](.5,1);
  \draw[dashed] (-1.25,-1) to[out=90,in=-90] (-1.65,0) to[out=90,in=-90](-.25,1);
  \draw (.5,-1) to[out=90,in=-90]node[below,at start]{$u_4$} (1,1);
    \draw[dashed] (-.25,-1) to[out=90,in=-90] (.25,1);
  \draw  (1,-1) to[out=90,in=-90] node[below,at start]{$u_5$}
  node[circle, midway,fill=black,inner sep=3pt]{} (0,1);
    \draw[dashed]  (.25,-1) to[out=90,in=-90] (-.75,1);
  \draw (-1, -1) to[out=90,in=-90] node[below,at start]{$u_1$} (-.5,0) to[out=90,in=-90] (-1,1);
  \draw (-.1,-1) to[out=90,in=-90] node[below,at start]{$u_3$} (-.5,1);
  \draw[dashed] (-1.75, -1) to[out=90,in=-90] (-1.25,0) to[out=90,in=-90] (-1.75,1);
  \draw[dashed] (-.85,-1) to[out=90,in=-90] (-1.25,1);
   \draw[wei](.125,-1) -- node[below,at start]{$\la_2$} (.125,1); \draw[wei](-.8,-1) --node[below,at start]{$\la_1$} (-.8,1);
    \draw[wei](1.325,-1) --node[below,at start]{$\la_3$} (1.325,1);
\end{tikzpicture}\]

The rank $n$  type WF KLR algebra $\gls{tdalg}^\vartheta (h,\Bz)$ is the algebra
generated by these diagrams over $\K[h,\Bz]$ modulo the local
relations (\ref{dots-1}--\ref{eq:KLRtriple-point2},\ref{cost}--\ref{red-triple}) and \begin{equation}\label{KLR-dumb-red-triple}
   \begin{tikzpicture}[very thick,baseline=-2pt,yscale=.8,]
    \draw [wei]  (0,-1) -- (0,1);
       \draw[dashed](.5,-1) to[out=90,in=-30] (-.5,1);
       \draw(-.5,-1) to[out=30,in=-90] (.5,1);
  \end{tikzpicture}= \begin{tikzpicture}[very thick,baseline=-2pt,yscale=.8,]
    \draw [wei]  (0,-1) -- (0,1);
       \draw[dashed](.5,-1) to[out=150,in=-90] (-.5,1);
       \draw(-.5,-1) to[out=90,in=-150] (.5,1);
  \end{tikzpicture}\qquad \qquad   \begin{tikzpicture}[very thick,baseline=-2pt,yscale=.8,]
    \draw [wei]  (0,-1) -- (0,1);
       \draw(.5,-1) to[out=90,in=-30] (-.5,1);
       \draw[dashed](-.5,-1) to[out=30,in=-90] (.5,1);
  \end{tikzpicture}= \begin{tikzpicture}[very thick,baseline=-2pt,yscale=.8]
    \draw [wei]  (0,-1) -- (0,1);
       \draw(.5,-1) to[out=150,in=-90] (-.5,1);
       \draw[dashed](-.5,-1) to[out=90,in=-150] (.5,1);
  \end{tikzpicture}\qquad \qquad   \begin{tikzpicture}[very thick,yscale=.8,baseline=-2pt]
    \draw [wei]  (0,-1) -- (0,1);
       \draw[dashed](.5,-1) to[out=90,in=-30] (-.5,1);
       \draw[dashed](-.5,-1) to[out=30,in=-90] (.5,1);
  \end{tikzpicture}= \begin{tikzpicture}[very thick,yscale=.8,baseline=-2pt]
    \draw [wei]  (0,-1) -- (0,1);
       \draw[dashed](.5,-1) to[out=150,in=-90] (-.5,1);
       \draw[dashed](-.5,-1) to[out=90,in=-150] (.5,1);
  \end{tikzpicture}.
\end{equation} This is a reduced
weighted KLR algebra for the Crawley-Boevey graph of $\gls{U}$ for the
highest weight $\la$.

The steadied quotient of $  \gls{cdalg}^\vartheta (h,\Bz)$ is the quotient of
$\gls{tdalg}^\vartheta (h,\Bz)$ by the 2-sided ideal generated by all
unsteady idempotents.
\end{definition}

As with the other algebras we've introduced, the algebra
$\gls{tdalg}^\vartheta (h,\Bz)$ has a natural polynomial
representation $P^\vartheta$,
defined in \cite[Prop. 2.7]{WebwKLR}.  It also has a
grading, with the degrees of diagrams given by \[
  \deg\tikz[baseline,very thick,scale=1.5]{\draw (.2,.3) --
    (-.2,-.1) node[at end,below, scale=.8]{$u$}; \draw (.2,-.1) --
    (-.2,.3) node[at start,below,scale=.8]{$v$};}
  =-2\delta_{u,v} \qquad \deg\tikz[baseline,very
  thick,scale=1.5]{\draw (0,.3) -- (0,-.1) node[at
    end,below,scale=.8]{$u$} node[midway,circle,fill=black,inner
    sep=2pt]{};}=2 \qquad\deg \tikz[baseline,very thick,scale=1.5]{\draw[densely dashed]
    (-.2,-.1)-- (.2,.3)  node[at start,below, scale=.8]{$u$}; \draw
    (.2,-.1) -- (-.2,.3)  node[at start,below, scale=.8]{$v$};} =\deg \tikz[baseline,very thick,scale=1.5]{\draw
    (-.2,-.1)-- (.2,.3)  node[at start,below, scale=.8]{$u$}; \draw[densely dashed]
    (.2,-.1) -- (-.2,.3)  node[at start,below, scale=.8]{$v$};}=\delta_{u,q^{\pm 1}v}
\]\[
  \deg\tikz[baseline,very thick,scale=1.5]{\draw[wei] (.2,.3) --
    (-.2,-.1) node[at end,below, scale=.8]{$\la$}; \draw (.2,-.1) --
    (-.2,.3) node[at start,below,scale=.8]{$u$};}
  =\deg\tikz[baseline,very thick,scale=1.5]{\draw (.2,.3) --
    (-.2,-.1) node[at end,below, scale=.8]{$u$}; \draw[wei] (.2,-.1) --
    (-.2,.3) node[at start,below,scale=.8]{$\la$};}=\al_u^{\vee}(\la),\]

\subsection{Isomorphisms}
\label{sec:isomorphisms-1}

As in all previous sections, we can show that both of the algebras we
have introduced have polynomial style representations (graded in the
case of $\gls{tdalg}^\vartheta (h,\Bz)  $) where
\begin{equation*}
\mathbb{K}=\K[[h]]\qquad A=\gls{tWF}^\vartheta (\pq,\PQ_{\bullet})\qquad
  B=\K[[h]][X^{\pm}]\qquad I=Bh+B\prod_{u\in \gls{U}}(X-u) \qquad P=\cP^{\vartheta}
\end{equation*}
\begin{equation*}
\mathbb{K}=\K[h]\qquad A=\gls{tdalg}^\vartheta (h,\Bz) \qquad
  B=\K[h,y]\qquad I=Bh+By \qquad P=P^{\vartheta}.
\end{equation*}
and thus have suitable completions $\widehat{\gls{tWF}}^\vartheta
(\pq,\PQ_{\bullet})$ and $\widehat{\gls{tdalg}}^\vartheta (h,\Bz) $.

Now, let $\Bu$ be a
loading on a set $D\in \mathscr{D}$, that is, a map $D\to \gls{U}$.  Let
 $u_1,\dots,u_n$ be the values of $\Bu$ read from left to right.  
Attached to such data, we have an idempotent $e_{\Bu}$ in $
 \gls{tdalg}^\vartheta_{\mathscr{D}} (h,\Bz)$ and another $\epsilon_{\Bu}$
 in
 $\widehat{\gls{tWF}}^\vartheta_{\mathscr{D}}(\pq,\PQ_{\bullet})
 $ given by projection to the stable kernel of $X_r-u_r$ for all $r$. 
\begin{theorem}\label{thm:Hecke-KLR}
  We have isomorphisms of $\K[h,\Bz]$-algebras
\[\gls{cWF}^\vartheta_{\mathscr{D}}(\pq,\PQ_{\bullet})\cong
  \gls{cdalg}^\vartheta_{\mathscr{D}} (h,\Bz)\qquad \widehat{\gls{tWF}}^\vartheta_{\mathscr{D}}(\pq,\PQ_{\bullet})\cong
  \widehat{\gls{tdalg}}^\vartheta_{\mathscr{D}} (h,\Bz)\] which send
\[\epsilon_{\Bu}\mapsto e_{\Bu}\qquad\qquad X_r\mapsto\sum_{\Bu}u_rb(y_r)e_{\Bu}\qquad \qquad \tikz[baseline,very thick,scale=1.5, green!50!black]{\draw (.2,.3) --
  (-.2,-.1); \draw [wei]
  (.2,-.1) -- (-.2,.3);} \mapsto\tikz[baseline,very
  thick,scale=1.5]{\draw (.2,.3) -- (-.2,-.1); \draw [wei] (.2,-.1) --
    (-.2,.3);}\]
 \vspace{-3mm}

\[\tikz[baseline,very thick,scale=1.5, green!50!black]{\draw [wei] (-.2,.3) --
  (.2,-.1); \draw 
  (-.2,-.1) -- (.2,.3);}  \epsilon_{\Bu} \mapsto
\frac{\displaystyle\prod_{\vartheta_s=o_k}(u_rb({y_r-z_s})-\PQ_s)}{\displaystyle\prod_{s\in
  S_{u_r,j}} (y_r-z_s)}\, \tikz[baseline,very thick,scale=1.5]{\draw [wei] (-.2,.3) --
  (.2,-.1); \draw 
  (-.2,-.1) -- (.2,.3);} e_{\Bu} 
\]
\[\subeqn\label{crossing-match2}
\tikz[baseline,very thick,scale=1.5, green!50!black]{\draw (.2,.3) --
  (-.2,-.1); \draw
  (.2,-.1) -- (-.2,.3);} \epsilon_{\Bu}\mapsto
\begin{cases}
\displaystyle\frac{1}{u_{r+1}b(y_{r+1})-u_rb(y_{r})}(\psi_r-1) e_{\Bu} & u_r\neq u_{r+1}\\
 \displaystyle \frac{y_{r+1}-y_r}{u_{r+1}(b(y_{r+1})-b(y_{r}))}\psi_r e_{\Bu}& u_r=u_{r+1}
\end{cases}
\]
\[\subeqn\label{ghost-match2}
\tikz[baseline,very thick,scale=1.5, green!50!black]{\draw[densely dashed] 
  (-.2,-.1)-- (.2,.3); \draw
  (.2,-.1) -- (-.2,.3);}\epsilon_{\Bu} \mapsto
\begin{cases}
\displaystyle u_rb(y_r)-\pq u_sb(y_s)\tikz[baseline,very thick,scale=1.5]{\draw[densely dashed]
    (-.2,-.1)-- (.2,.3); \draw
    (.2,-.1) -- (-.2,.3);} e_{\Bu}&
  u_r\neq qu_s\\ 
 \displaystyle \frac {u_rb(y_r)-\pq u_sb(y_s)}{y_{s}-y_{r}}\tikz[baseline,very thick,scale=1.5]{\draw[densely dashed]
    (-.2,-.1)-- (.2,.3); \draw
    (.2,-.1) -- (-.2,.3);}e_{\Bu}& u_r=qu_s, d(h)=1\\
 \displaystyle \frac {u_rb(y_r)-\pq u_sb(y_s)}{y_{s}-y_{r}+h}\tikz[baseline,very thick,scale=1.5]{\draw[densely dashed]
    (-.2,-.1)-- (.2,.3); \draw
    (.2,-.1) -- (-.2,.3);}e_{\Bu}& u_r=qu_s,d(h)=e^h
\end{cases}
\qquad \qquad\tikz[baseline,very thick,scale=1.5, green!50!black]{\draw (.2,.3) --
  (-.2,-.1); \draw [densely dashed]
  (.2,-.1) -- (-.2,.3);} \mapsto \tikz[baseline,very thick,scale=1.5]{\draw (.2,.3) --
  (-.2,-.1); \draw [densely dashed]
  (.2,-.1) -- (-.2,.3);} \] 
where the solid strand shown is the $r$th (and $r+1$st in the first line),
and the ghost is associated to the $s$th from the left.
\end{theorem}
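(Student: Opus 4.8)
The plan is to mimic the proofs of Theorems \ref{W-isomorphism} and \ref{F-isomorphism} verbatim: rather than verifying the (numerous) defining relations directly, I would compare the two polynomial representations and check that the asserted formulas intertwine them. Recall that $\widetilde{\EuScript{{WF}}}^\vartheta_{\mathscr{D}}(\pq,\PQ_{\bullet})$ carries a polynomial representation built from the rules of Proposition \ref{W-poly} on the green strands and ghosts together with the rules of Proposition \ref{tilde-poly} on diagrams meeting a red strand, while $\tilde{\dalg}^\vartheta_{\mathscr{D}}$ acts on the representation defined in \cite[\ref{w-prop:action}]{WebwKLR}. First I would invoke the vector space isomorphism $\gamma_p$ of the previous sections, which sends multiplication by $X_r$ on the completed Hecke-side polynomial representation to multiplication by $u_rb(y_r)$ on the KLR side; conjugation by $\gamma_p$ then forces the correspondence on generators, and it remains only to match operators.

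The key observation is that the generators decompose into four local types, three of which are already handled. The crossing of two green strands \eqref{crossing-match2} and the crossing of a green strand with a ghost \eqref{ghost-match2} are literally the operator computations of Theorem \ref{W-isomorphism}; the presence of red strands changes nothing, since these diagrams are supported away from the red lines. The crossing of a green strand with a red strand is exactly the single-red-strand computation appearing in the proof of Theorem \ref{F-isomorphism}: because the red-strand relations of $\tilde{\dalg}^\vartheta$ are the local relations \eqref{cost}--\eqref{red-triple} used there, and the cost relation \eqref{qHcost} on the Hecke side is unchanged, I can localize to a single red strand and match $p_{\PQ}(u_rb(y_r))$ against the deformed KLR red/green crossing, the substitution $X_r\mapsto u_rb(y_r)$ and the deformation $\PQ_s=Q_se^{-z_s}$ producing precisely the numerator $\prod_{\vartheta_s=o_k}(u_rb(y_r-z_s)-\PQ_s)$ and denominator $\prod_{s\in S_{u_r,j}}(y_r-z_s)$ in the stated formula.

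The genuinely new local interaction in type WF is the crossing of a ghost with a red strand, and this is harmless: relation \eqref{PC-dumb-red-triple} on the Hecke side and relation \eqref{KLR-dumb-red-triple} on the KLR side both declare such crossings to be mere isotopies, so each acts as the identity on the respective polynomial representation and the two match trivially. Since every matching coefficient — the $A_r^{\Bu}$, the factor $\frac{y_{r+1}-y_r}{u_{r+1}(b(y_{r+1})-b(y_r))}$, and the green/red coefficient above — is an invertible element of the relevant completed ring, the image of each set of generators lands in the subalgebra generated by the other. Using faithfulness of the polynomial representations exactly as in Theorem \ref{wdHecke}, this mutual containment yields the isomorphism $\widetilde{\EuScript{{WF}}}^\vartheta_{\mathscr{D}}(\pq,\PQ_{\bullet})\cong\tilde{\dalg}^\vartheta_{\mathscr{D}}$.

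Finally, to pass to the steadied quotients $\EuScript{{WF}}^\vartheta_{\mathscr{D}}(\pq,\PQ_{\bullet})\cong\dalg^\vartheta_{\mathscr{D}}$, I would note that the isomorphism sends $\epsilon_{\Bu}\mapsto e_{\Bu}$ and that steadiness is a purely combinatorial condition on the underlying loading (the positions of the strands relative to the red lines and the gap $|\ck|$), hence is preserved under this correspondence. Thus unsteady idempotents map to unsteady idempotents, the two-sided ideals they generate correspond, and the isomorphism descends to the quotients. I expect the main obstacle to be not conceptual but organizational: carefully confirming that the green/red crossing formula reproduces the deformed action of \cite{WebwKLR} once $X_r\mapsto u_rb(y_r)$ and $\PQ_s=Q_se^{-z_s}$ are substituted, as this is the step where the somewhat opaque numerator and denominator of the stated map actually originate.
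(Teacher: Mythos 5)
Your proposal is correct and follows essentially the same route as the paper: the paper's proof likewise reduces to comparing the completed polynomial representations, citing Theorem \ref{W-isomorphism} for all diagrams involving only green strands and ghosts and Theorem \ref{F-isomorphism} for crossings with red strands, and disposes of the steadied quotients by observing that unsteady idempotents map to unsteady idempotents. Your additional remarks (the triviality of ghost/red crossings via (\ref{PC-dumb-red-triple}) and (\ref{KLR-dumb-red-triple}), and the invertibility of the matching coefficients) are details the paper leaves implicit, not a different argument.
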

\begin{proof}
  That this map sends unsteady idempotents to unsteady idempotents is
  clear, so we need only show that we have an isomorphism
  $\gls{tWF}^\vartheta_{\mathscr{D}}(\pq,\PQ_{\bullet})\cong
  \gls{tdalg}^\vartheta_{\mathscr{D}} (h,\Bz)$.
  As in the proofs of Theorems \ref{O-isomorphism},
  \ref{W-isomorphism}, and \ref{F-isomorphism}, we check this by
  comparing polynomial representations.  The comparison for diagrams
  involving no red strands is covered by the isomorphism of Theorem
  \ref{W-isomorphism} and for crossings with red strands is checked in
  Theorem \ref{F-isomorphism}.
\end{proof}
Just as in Section \ref{sec:weight-gener}, this isomorphism does not
immediately grade the cyclotomic $q$-Schur algebra, since the
idempotent from Theorem \ref{cqs-morita} does not have homogeneous
image.  One can, however,
define a homogenous idempotent $e''$ with isomorphic image.  As
before, $e''$ will be a sum over $\ell$-ordered lists of multi-subsets
of $\gls{U}$
whose size gives a multi-composition in $\Lambda$.  Each of these
contributes the idempotent where the points connected to the part
$\mu_i^{(s)}$ are labeled with the multi-subset, in increasing order,
with a primitive idempotent in the nilHecke algebra acting on the
groups with the same label.  
 
Note that in the level one case, a graded version of the $q$-Schur
algebra was defined by Ariki \cite{Arikiq}.  This grading was uniquely
determined by its compatibility with the Brundan-Kleshchev grading on
the Hecke algebra, so our algebra must match up to graded Morita
equivalence with that of \cite[3.17]{Arikiq} (just as we saw with the
closely related quiver Schur algebra in \cite[Th. 7.9]{SWschur}).

\glsclearpage
\printglossaries
\bibliography{./gen}
\bibliographystyle{amsalpha}

\end{document}